\theoremstyle{plain}
\newtheorem{theorem}{Theorem}[section]
\newtheorem{corollary}[theorem]{Corollary}
\newtheorem{prop}[theorem]{Proposition}
\newtheorem{lemma}[theorem]{Lemma}
\theoremstyle{definition}
\newtheorem{remark}[theorem]{Remark}
\newtheorem{definition}[theorem]{Definition}
\newtheorem{problem}[theorem]{Problem}
 \DeclareMathOperator{\dist}{dist}
 \DeclareMathOperator{\supp}{supp}
 \DeclareMathOperator{\sign}{sign}
 \DeclareMathOperator{\Lin}{span}
\newcommand{\R}{\mathbb{R}}
\newcommand{\N}{\mathbb{N}}
\newcommand{\D}{\mathbb{D}}
\newcommand{\Q}{\mathbb{Q}}
\newcommand{\U}{\mathfrak U}
\newcommand{\Norm}{|\mkern-2mu|\mkern-2mu|}
\newcommand{\eps}{\varepsilon}
\renewcommand{\leq}{\leqslant}
\renewcommand{\geq}{\geqslant}
\renewcommand{\le}{\leqslant}
\DeclareMathOperator{\NA}{NA}
\begin{document}
\title[Extremely nonlineable set of norm attaining functionals]{Equivalent norms with an extremely nonlineable set of norm attaining functionals}

\author[V.~Kadets]{Vladimir Kadets}
\author[G.~L\'opez]{Gin\'es L\'opez}
\author[M.~Mart\'{\i}n]{Miguel Mart\'{\i}n}
\author[D.~Werner]{Dirk Werner}

\address[Kadets]{School of Mathematics and Informatics \\
  V.~N.~Karazin   Kharkiv National University \\
pl.~Svobody~4 \\
61022~Kharkiv \\ Ukraine
\newline
\href{http://orcid.org/0000-0002-5606-2679}{ORCID: \texttt{0000-0002-5606-2679} }
}
\email{vova1kadets@yahoo.com}

\address[L\'opez]{Departamento de An\'{a}lisis Matem\'{a}tico \\ Facultad de
 Ciencias \\ Universidad de Granada \\ 18071 Granada, Spain
\newline
\href{http://orcid.org/0000-0002-3689-1365}{ORCID: \texttt{0000-0002-3689-1365} }
 }
\email{glopezp@ugr.es}

\address[Mart\'{\i}n]{Departamento de An\'{a}lisis Matem\'{a}tico \\ Facultad de
 Ciencias \\ Universidad de Granada \\ 18071 Granada, Spain
\newline
\href{http://orcid.org/0000-0003-4502-798X}{ORCID: \texttt{0000-0003-4502-798X} }
 }
\email{mmartins@ugr.es}

\address[Werner]{Department of Mathematics \\ Freie Universit\"at Berlin \\
Arnimallee~6 \\ D-14195~Berlin \\ Germany\newline
\href{http://orcid.org/0000-0003-0386-9652}{ORCID: \texttt{0000-0003-0386-9652}}
}
\email{werner@math.fu-berlin.de}

\thanks{The work of the first-named author
was performed during his visit to Freie Universit\"at Berlin in the framework of a grant from the {\it Alexander-von-Humboldt Stiftung}. The research of the second and third authors is partially supported by Spanish MINECO/FEDER grant MTM2015-65020-P}

\begin{abstract}
We present a construction that enables one to find Banach spaces $X$
whose sets $\NA(X)$ of norm attaining functionals do not contain
two-dimensional subspaces and such that, consequently, $X$ does not
contain proximinal subspaces of finite codimension greater than one,
extending the results recently provided by Read \cite{Read} and
Rmoutil \cite{Rmoutil}. Roughly speaking, we construct an equivalent
renorming with the requested properties for every Banach space $X$ where the set $\NA(X)$ for the original norm is not ``too large''.  The construction can be applied to every Banach space containing $c_0$ and having a countable system of norming functionals, in particular, to separable Banach spaces containing $c_0$. We also provide some geometric properties of the norms we have constructed.
\end{abstract}

\date{September 5th, 2017; Revised version; October 12th, 2017}

\subjclass[2010]{46B03, 46B04}

\keywords{norm attaining functionals, renormings of Banach spaces,
  nonlineable sets, proximinal subspaces}

\maketitle
\thispagestyle{empty}

\section{Introduction}
A subset $Y$ of a (real) Banach space $X$ is said to be proximinal if
for every $x \in X$ there is a $y \in Y$ such that $\|x - y\| =
\dist(x,Y)$. The classical Bishop-Phelps theorem implies that every
infinite-dimensional  Banach space contains a one-codimensional
proximinal subspace. More than 40 years ago, Ivan Singer
\cite[Problem~2.1]{Singer}
asked whether every infinite-dimensional Banach space contains
proximinal subspaces of codimension~$2$. Recently  Charles J.~Read
\cite{Read} answered this question in the negative. The corresponding space $\mathcal R$ is  $c_0$ equipped with a special equivalent  norm  $\Norm\cdot\Norm$ ingeniously constructed by Read.

In  \cite[Theorem~4.2]{Rmoutil}, Martin Rmoutil demonstrates that the
same space $\mathcal R$ gives the negative solution to another (at
that time open) problem by Gilles Godefroy
\cite[Problem~III]{Godefroy}: is it true that  for every
infinite-dimensional Banach space the set of those functionals in the
dual space which attain their norm contains a two-dimensional linear
subspace?
Recall that a subset $S$ of a vector space is called \emph{lineable} if
$S\cup \{0\}$ contains an infinite-dimensional linear subspace, and we
call it \emph{extremely nonlineable} if $S\cup\{0\}$ does not even
contain a two-dimensional subspace. So by
Rmoutil's work, the set of norm attaining functionals on $\mathcal{R}$
is extremely nonlineable.

We note that there is a general statement saying that if $X$ contains proximinal subspaces of finite codimension at least two, then the set of norm attaining functionals contains a two-dimensional linear subspace (see \cite[Proposition~III.4]{Godefroy}).

Motivated by these facts, let us say that an equivalent norm $p$ on a
Banach space $X$ is a \emph{Read norm} if the set of norm attaining
functionals for this norm  does not contain two-dimensional linear
subspaces, so the space $X$ endowed with the norm $p$
  does not contain proximinal subspaces of finite codimension greater than
  one. In this paper we present a clear geometric idea which enables
us to simplify substantially Read's original construction of a
Read norm on $c_0$, and to extend the construction to some other
spaces. In particular, we show that every Banach space having a
countable norming system of functionals and containing a copy of $c_0$
admits an equivalent Read norm. We further provide some geometric
properties of the constructed Read norms which extend
the ones given in \cite{KadetsLopezMartin} for   Read's original
space~$\mathcal R$.

To this end, we introduce the concept of modesty and weak-star modesty of subspaces (see Definition~\ref{def-operator range+modest}) and show that a Read norm can be constructed whenever the linear span of the set of norm attaining functionals is weak-star modest.

The outline of the paper is as follows. We finish this introduction
with a subsection which collects all the notation and terminology used
in the paper. We devote Section~\ref{sect:preliminaries} to
preliminaries: we provide properties of two kinds of renorming of a
Banach space which will be used throughout the paper,  we introduce
the concept of modest and weak-star modest subspace, and we give some
needed results. The main part of the paper is contained in
Section~\ref{sect:main-construction}, where we show that a Banach
space admits an equivalent Read norm if the linear span of the set of
norm attaining functionals for the given norm is weak-star modest in
the dual space, recovering in particular the original results of Read
and Rmoutil. We also show that the constructed Read norms are always
strictly convex. Section~\ref{sect:containing_c_0} contains the main
application of the previous result: if a Banach space $X$ has a
countable norming system of functionals and contains an isomorphic
copy of $c_0$, then it admits an equivalent Read norm;
in particular, this is so if $X$ is separable and contains a copy
of~$c_0$. We also show that for every $0<\eps<2$, an equivalent Read
norm can be chosen in such a way that all convex combinations of
slices of its unit ball have diameter greater than $2-\eps$, so its
dual norm is $(2-\eps)$-rough; in the case when $X$ is separable, it
is possible to get a Read norm which is strictly convex and smooth and
whose dual norm is strictly convex and rough; if moreover $X^*$ is
separable, then in addition to  the above properties the bidual norm
is strictly convex. Finally, we discuss in Section~\ref{sect-limits}
some limitations of our construction as, for instance, that no Banach
space with the Radon-Nikod\'{y}m property admits an equivalent norm
for which the linear span of the norm attaining functionals is
weak-star modest.

\subsection{Notation and terminology}
Throughout the paper, the letters $X$, $Y$, $Z$ will stand for real Banach spaces. For a Banach space $X$, $X^*$ denotes its topological dual, $B_X$ and $S_X$ are, respectively, the closed unit ball and the unit sphere of $X$, and we write $J_X\colon X\longrightarrow X^{**}$ to represent the canonical isometric inclusion of $X$ into its bidual. We write $\NA(X)$ to denote the subset of $X^*$ of all functionals attaining their norm, that is, those functionals $f\in X^*$ such that $\|f\|=|f(x)|$ for some $x\in S_X$. If necessary, we will write $\NA(X,\|\cdot\|)$ to make clear that we are considering the space $X$ endowed with the norm $\|\cdot\|$.

A Banach space $X$ (or its norm) is said to be \emph{strictly convex} if $S_X$ does not contain any non-trivial segment or, equivalently, if $\|x+y\|<2$ whenever $x,y\in B_X$, $x\neq y$. The space $X$ is said to be \emph{smooth} if its norm is G\^{a}teaux differentiable at every non-zero element. A norm of a Banach space $X$ is said to be $\rho$-\emph{rough} ($0<\rho\leq 2$) if
$$
\limsup_{\|h\|\to 0}\frac{\|x+h\|+\|x-h\|-2\|x\|}{\|h\|}\geq \rho
$$
for every $x\in X$. We refer the reader to the classical books \cite{D-G-Z} and
\cite{FHHMPZ} for more information and background on the geometry of Banach spaces.

Finally, we will denote by $\{e_n\}$ the canonical basis of $c_0$ or $\ell_1$, that is, the $k$-th coordinate of $e_n$ equals 0 for $n \neq k$ and equals $1$ for $n = k$.

\section{Preliminaries}\label{sect:preliminaries}
Our first goal in this section is to present the properties of two
types of equivalent renormings of a Banach space. In the first one, we
add to the original norm of each element the norm of its image under
the action of a fixed operator. This  kind of renorming is well known in Banach space theory, see e.g.\   \cite[Proposition~III.2.11]{HWW},  and it  was also used by
Read to produce his counterexample \cite{Read}.

\begin{lemma}\label{lemma-prelim-sumofnorms}
Let $X$, $Y$ be Banach spaces, and let $R\colon X\longrightarrow Y$ be a bounded linear operator. Define an equivalent norm on $X$ by
$$
\Norm x\Norm =\|x\|_X + \|R(x)\|_Y \qquad (x\in X).
$$
Then
\begin{enumerate}
\item[(a)] $\displaystyle B_{(X,\Norm \cdot\Norm )^*}=B_{(X,\|\cdot\|)^*}+R^*(B_{Y^*})$;
\item[(b)] $\Norm x^{**}\Norm =\|x^{**}\|_{X^{**}} + \|R^{**}x^{**}\|_{Y^{**}}$ for every $x^{**}\in X^{**}$;
\item[(c)] if $x\in X$ and $x^*\in X^*$ satisfy  $\Norm x^*\Norm =1$ and $x^*(x)=\Norm x\Norm $, then
$x^*=\tilde{x}^* + R^*y^*$ where $\tilde{x}^*\in S_{(X,\|\cdot\|)^*}$ with $\tilde{x}^*(x)=\|x\|_X$ and $y^*\in S_{Y^*}$ with $y^*(Rx)=\|Rx\|_Y$;
\item[(d)] if $R(X)$ is strictly convex and $R$ is one-to-one, then $(X,\Norm \cdot\Norm )$ is strictly convex;
\item[(e)] if $X$ is $\rho$-rough for some $0<\rho\leq 2$, then $(X,\Norm \cdot\Norm )$ is $\rho(1+\|R\|)^{-1}$-rough.
\end{enumerate}
\end{lemma}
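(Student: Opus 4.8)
The plan is to verify the five items in order, since each builds on the norming description of the dual ball. For (a), I would note that the new norm $\Norm\cdot\Norm$ is exactly the norm on $X$ induced by the isometric embedding $x\mapsto (x,Rx)$ of $X$ into $X\oplus_1 Y$; the dual of $X\oplus_1 Y$ is $X^*\oplus_\infty Y^*$, and the dual ball of a quotient (here $X$ is a quotient of $X\oplus_1 Y$ via this embedding, or rather $(X,\Norm\cdot\Norm)^*$ is the image of $B_{X^*\oplus_\infty Y^*}$ under the adjoint of the embedding) is the image of the bigger ball, which is precisely $B_{(X,\|\cdot\|)^*} + R^*(B_{Y^*})$. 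Alternatively, and perhaps more cleanly for a self-contained argument, I would compute the support function: for $x\in X$,
\[
\sup\{(x^* + R^*y^*)(x) : x^*\in B_{X^*},\ y^*\in B_{Y^*}\} = \|x\|_X + \|Rx\|_Y = \Norm x\Norm,
\]
so the closed convex symmetric set $B_{(X,\|\cdot\|)^*} + R^*(B_{Y^*})$ has the same support function as $B_{(X,\Norm\cdot\Norm)^*}$, hence the two sets coincide; one only has to check that $B_{(X,\|\cdot\|)^*} + R^*(B_{Y^*})$ is weak-star closed, which follows from weak-star compactness of both summands.

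For (b), the same embedding argument works with $X^{**}$ in place of $X$: the bitranspose of $x\mapsto(x,Rx)$ is $x^{**}\mapsto(x^{**}, R^{**}x^{**})$ into $X^{**}\oplus_1 Y^{**}$ (using that the bidual of $X\oplus_1 Y$ is $X^{**}\oplus_1 Y^{**}$ and that taking adjoints twice turns $\oplus_\infty$ back into $\oplus_1$), and the bidual norm of $\Norm\cdot\Norm$ is the restriction of that $\oplus_1$ norm; alternatively one evaluates $\Norm x^{**}\Norm = \sup\{x^{**}(x^*+R^*y^*)\}$ over the ball from (a) and gets $\|x^{**}\|+\|R^{**}x^{**}\|$ directly. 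Item (c) is then a routine extraction: given $\Norm x^*\Norm = 1$ and $x^*(x)=\Norm x\Norm$, write $x^* = \tilde x^* + R^*y^*$ with $\tilde x^*\in B_{X^*}$, $y^*\in B_{Y^*}$ using (a); the chain of inequalities
\[
\|x\|_X + \|Rx\|_Y = x^*(x) = \tilde x^*(x) + y^*(Rx) \le \|\tilde x^*\|\,\|x\|_X + \|y^*\|\,\|Rx\|_Y \le \|x\|_X + \|Rx\|_Y
\]
forces equality throughout, so $\|\tilde x^*\|=\|y^*\|=1$ (assuming $x\ne 0$, the degenerate case being trivial) and $\tilde x^*(x)=\|x\|_X$, $y^*(Rx)=\|Rx\|_Y$, which is the claim.

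For (d): suppose $\Norm x\Norm = \Norm y\Norm = 1$ and $\Norm x+y\Norm = 2$. From the triangle inequality applied separately to $\|\cdot\|_X$ and $\|R(\cdot)\|_Y$ we get $\|x+y\|_X = \|x\|_X + \|y\|_X$ and $\|Rx+Ry\|_Y = \|Rx\|_Y + \|Ry\|_Y$; the second equality together with strict convexity of $R(X)$ forces $Rx$ and $Ry$ to be positive multiples of one another (once one disposes of the case where one of them is zero, which by injectivity of $R$ would force $x$ or $y$ to be zero and is handled directly), and combining the proportionality $Rx = \lambda Ry$ with injectivity of $R$ gives $x=\lambda y$; then $\Norm x\Norm = \Norm y\Norm=1$ forces $\lambda = 1$, so $x = y$. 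Finally for (e), $\rho$-roughness of $(X,\|\cdot\|)$ gives, for each fixed $x$, a sequence $h_n\to 0$ with $\|x+h_n\|_X + \|x-h_n\|_X - 2\|x\|_X \ge (\rho - o(1))\|h_n\|_X$; I would estimate
\[
\Norm x+h_n\Norm + \Norm x-h_n\Norm - 2\Norm x\Norm \ge \bigl(\|x+h_n\|_X + \|x-h_n\|_X - 2\|x\|_X\bigr) + 0
\]
(the $R$-part contributes a nonnegative amount by convexity of $\|R(\cdot)\|_Y$), and divide by $\Norm h_n\Norm \le (1+\|R\|)\|h_n\|_X$ to obtain a $\limsup$ at least $\rho(1+\|R\|)^{-1}$. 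The only mild subtlety throughout — and the step I would be most careful about — is the weak-star closedness needed in (a) to identify the two sets (and correspondingly the norm-closedness / use of Goldstine in (b)); everything else is elementary manipulation of norms and should be dispatched quickly.
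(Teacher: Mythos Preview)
Your proposal is correct and follows essentially the same route as the paper, only with considerably more detail: the paper computes the support function of $D=B_{(X,\|\cdot\|)^*}+R^*(B_{Y^*})$ and invokes the bipolar theorem together with weak-star compactness of the summands for (a), cites an external reference (noting it also follows from (a)) for (b), and declares (c), (d), (e) immediate or classical (Klee's argument for (d)). Your $X\oplus_1 Y$ embedding viewpoint is a pleasant conceptual packaging the paper does not make explicit, but your ``support function'' alternative for (a) and your direct estimates for (c)--(e) are exactly the arguments the paper has in mind when it says ``immediate''.
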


\begin{proof}
(a) Write $D=B_{(X,\|\cdot\|)^*}+R^*(B_{Y^*})$.  First, it is clear that
$$
\sup_{x^* \in D} x^*(x) = \Norm x\Norm
$$
for every $x \in X$. This means that $B_{(X,\Norm \cdot\Norm )}$ is the polar set of $D$. Consequently, $B_{(X,\Norm \cdot\Norm )^*}$ is the bipolar of $D$. So, it remains to demonstrate that $D$ is weak-star closed, a fact which follows from the fact that both $B_{(X,\|\cdot\|)^*}$ and $R^*(B_{Y^*})$ are weak-star compact.

(b) This is just \cite[Proposition~3]{KadetsLopezMartin}. Remark that
this fact can be  deduced much more easily  directly from~(a).

(c) This is immediate from~(a).

(d) This fact is widely used in the theory of equivalent renormings; it was first remarked by Victor Klee, see the proof of \cite[Ch.~4,  \S~2, Theorem~1]{Diest-Geom}.

Finally, (e) follows immediately from the definition of roughness.
\end{proof}

In the second type of renorming, the new unit ball is the sum of the
given unit ball and the image of a weakly compact unit ball by a
bounded linear operator. This kind of renorming was used in \cite{DebsGodSR} to study properties of the set of norm attaining functionals.

\begin{lemma}\label{lemma-prelim-sum-of-balls}
Let $X$ be a Banach space, let $Z$ be a reflexive space and let $S\colon Z\longrightarrow X$ be a bounded linear operator. Then there is an equivalent norm $|\cdot|$ on $X$ whose unit ball is the set $B_X + S(B_Z)$, and the following assertions hold:
\begin{enumerate}
  \item[(a)] $|x^*|=\|x^*\|_{X^*} + \|S^*x^*\|_{Z^*}$ for every $x^*\in X^*$;
  \item[(b)] $\displaystyle B_{(X,|\cdot|)^{**}} = B_{(X,\|\cdot\|)^{**}} + J_X( S(B_{Z}))$;
  \item[(c)] if $X$ and $Z$ are strictly convex, then $(X,|\cdot|)$ is strictly convex;
  \item[(d)] $\NA(X,|\cdot|)=\NA(X,\|\cdot\|)$.
\end{enumerate}
\end{lemma}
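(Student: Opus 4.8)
The plan is to prove the four assertions of Lemma~\ref{lemma-prelim-sum-of-balls} mostly by duality, exploiting that $Z$ is reflexive so that $S(B_Z)$ is weakly compact, hence norm-closed and convex; therefore $B_X + S(B_Z)$ is a closed, bounded, convex, symmetric set with $0$ in its interior, and the Minkowski functional $|\cdot|$ it defines is an equivalent norm. For part~(a), I would compute the dual norm as a support functional: for $x^*\in X^*$,
\[
|x^*| = \sup_{x\in B_X,\ z\in B_Z} \bigl(x^*(x) + x^*(S z)\bigr) = \sup_{x\in B_X} x^*(x) + \sup_{z\in B_Z} (S^*x^*)(z) = \|x^*\|_{X^*} + \|S^*x^*\|_{Z^*},
\]
using that the supremum over a Minkowski-sum set splits. (The only subtlety is checking that $B_X+S(B_Z)$ is exactly the unit ball of the norm whose support function is the right-hand side; this is the same polar/bipolar argument as in the proof of Lemma~\ref{lemma-prelim-sumofnorms}(a), now with weak-star compactness of $B_{X^*}$ replaced by the observation that we are describing the predual ball directly.)

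For part~(b), I would apply part~(a) to the bidual: the norm $|\cdot|$ on $X$ has dual norm $|x^*| = \|x^*\|_{X^*} + \|S^*x^*\|_{Z^*}$ on $X^*$, and its bidual norm on $X^{**}$ is in turn the dual of this. Since $Z$ is reflexive, $S^{**}$ restricted to $J_X(X)$-related considerations behaves well: more directly, one can identify the bidual unit ball as the weak-star closure of $J_X(B_{(X,|\cdot|)})$, which is the weak-star closure of $J_X(B_X) + J_X(S(B_Z)) = J_X(B_X) + S^{**}(B_{Z^{**}})$ (using $Z$ reflexive so $J_Z(B_Z)=B_{Z^{**}}$ and $S^{**}J_Z = J_X S$). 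The weak-star closure of $J_X(B_X)$ is $B_{(X,\|\cdot\|)^{**}}$ by Goldstine, and $S^{**}(B_{Z^{**}})$ is already weak-star compact; adding a weak-star compact set to a weak-star compact set gives a weak-star compact (hence closed) set, so the closure of the sum equals the sum of the closures. An alternative, cleaner route is to recompute directly: the bidual norm of $x^{**}$ equals $\sup\{x^{**}(x^*) : |x^*|\le 1\}$, split the constraint set as in~(a), and recognize the answer as the support function of $B_{(X,\|\cdot\|)^{**}} + J_X(S(B_Z))$.

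For part~(c), strict convexity: I would use that the unit ball $B_X + S(B_Z)$ of $|\cdot|$ is a Minkowski sum of two strictly convex bodies (assuming $X$ and $Z$ strictly convex, and noting $S(B_Z)$ need not be strictly convex if $S$ is not injective). The standard fact is that the Minkowski sum of a strictly convex body with \emph{any} convex body is strictly convex, provided at least one summand is strictly convex and both are closed bounded with nonempty interior — this is exactly the Klee-type argument invoked in Lemma~\ref{lemma-prelim-sumofnorms}(d). Concretely: if $u = x_1 + S z_1$ and $v = x_2 + S z_2$ are two points of the sphere with midpoint also on the sphere, then equality in the triangle inequality $\|x_1+x_2\|_X \le \|x_1\|_X + \|x_2\|_X$ and in the corresponding inequality for $S z_i$ forces (by strict convexity of $B_X$, after normalizing) $x_1$ and $x_2$ to be parallel, hence equal in norm and direction, so $x_1=x_2$, and then $u=v$. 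I would write this out carefully since the bookkeeping with the two Minkowski summands is where small errors creep in. Part~(d) is the key new point of the lemma and the one I expect to take real work: I would show $\NA(X,|\cdot|) = \NA(X,\|\cdot\|)$ by a direct argument. If $x^*$ attains its $|\cdot|$-norm at some point $u\in B_{(X,|\cdot|)}$, write $u = x + Sz$ with $x\in B_X$, $z\in B_Z$; then $x^*(x) + x^*(Sz) = |x^*| = \|x^*\| + \|S^*x^*\|$, and since $x^*(x)\le\|x^*\|$ and $x^*(Sz) = (S^*x^*)(z) \le \|S^*x^*\|$, both must be equalities, so in particular $x^*$ attains $\|x^*\|_{X^*}$ at $x$, giving $x^*\in\NA(X,\|\cdot\|)$. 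Conversely, if $x^*\in\NA(X,\|\cdot\|)$, pick $x\in S_X$ with $x^*(x)=\|x^*\|$; the supremum $\sup_{z\in B_Z}(S^*x^*)(z) = \|S^*x^*\|_{Z^*}$ is attained at some $z_0\in B_Z$ because $B_Z$ is weakly compact and $S^*x^*$ is weak-continuous on it (here reflexivity of $Z$ is essential); then $u = x + Sz_0 \in B_X + S(B_Z)$ satisfies $x^*(u) = \|x^*\| + \|S^*x^*\| = |x^*|$, so $x^*\in\NA(X,|\cdot|)$. The main obstacle is nothing deep but rather making sure the attainment on the $Z$-side genuinely uses reflexivity and that the splitting of equality cases in both directions is airtight.
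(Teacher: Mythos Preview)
Your arguments for (a), (b), and (d) are correct and match the paper's approach. For (a) and (d) the paper simply cites \cite{DebsGodSR}; your write-up of (d) is exactly the intended argument (the supremum over $B_X+S(B_Z)$ is attained iff it is attained on each summand, and reflexivity of $Z$ guarantees attainment on $S(B_Z)$). For (b) the paper uses precisely your ``cleaner route'': once (a) is established, the dual norm on $X^*$ has the form $\|x^*\|_{X^*}+\|S^*x^*\|_{Z^*}$, so Lemma~\ref{lemma-prelim-sumofnorms}(a) applied with $R=S^*$ gives $B_{(X,|\cdot|)^{**}}=B_{X^{**}}+S^{**}(B_{Z^{**}})$, and reflexivity of $Z$ identifies $S^{**}(B_{Z^{**}})$ with $J_X(S(B_Z))$. (In your Goldstine variant, the phrase ``adding a weak-star compact set to a weak-star compact set'' should read ``\dots\ to a weak-star \emph{closed} set'', but the conclusion stands.)

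Part (c), however, has a genuine gap. The ``standard fact'' you invoke---that the Minkowski sum of a strictly convex body with an arbitrary convex body is strictly convex---is false: in the Euclidean plane the sum of the unit disc and a nondegenerate segment is a stadium, which has two flat sides. Your concrete argument does not repair this: you never justify why ``equality in the triangle inequality $\|x_1+x_2\|\le\|x_1\|+\|x_2\|$'' should hold (the condition $|u|=|v|=\tfrac12|u+v|=1$ imposes no direct relation between $\|x_i\|_X$ and $\|Sz_i\|$), and even granting $x_1=x_2$ the conclusion $u=v$ would still require $Sz_1=Sz_2$, which you have not established and which the stadium example shows can fail. The paper proceeds differently and uses strict convexity of $Z$ itself, not of $S(B_Z)$: writing $\widetilde x=x+Su$, $\widetilde y=y+Sv$ with $x,y\in B_X$ and $u,v\in B_Z$, one chooses $f$ with $|f|=1$ and $f(\widetilde x+\widetilde y)=2$, uses (a) to write $|f|=\|f\|_{X^*}+\|S^*f\|_{Z^*}$, and splits the chain
\[
2=f(x+y)+(S^*f)(u+v)\le \|f\|\,\|x+y\|_X+\|S^*f\|\,\|u+v\|_Z\le 2\bigl(\|f\|+\|S^*f\|\bigr)=2
\]
to force $\|x+y\|_X=2$ and $\|u+v\|_Z=2$; strict convexity of $X$ and of $Z$ then give $x=y$ and $u=v$, whence $\widetilde x=\widetilde y$. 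The point is that the dual norm formula (a) is what links the $|\cdot|$-sphere condition to the two summands, and the strict convexity hypothesis is needed on $Z$, not on its image.
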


\begin{proof}
First, as $Z$ is reflexive and $S$ is weakly continuous,  the set
$S(B_Z)$ is weakly compact, so $B_X + S(B_Z)$ is closed. As it is also
bounded, balanced and solid, it is the unit ball of an equivalent norm $|\cdot|$ on $X$.

(a) is elementary and it is shown in the proof of \cite[Theorem~9(4)]{DebsGodSR}.

(b) follows from (a) of this lemma and (a) of the previous Lemma~\ref{lemma-prelim-sumofnorms}.

(c) Consider $\widetilde{x},\widetilde{y}\in S_{(X,|\cdot|)}$ such that $|\widetilde{x} + \widetilde{y}|=2$. Write $\widetilde{x}=x+T(u)$, $\widetilde{y}=y+T(v)$ with $x,y\in B_X$ and $u,v\in B_Z$ and consider $f\in X^*$ with
$$
|f|=1 \quad \text{and} \quad |f(\widetilde{y} + \widetilde{z})|=2.
$$
As we have that
\begin{align*}
2=|f(\widetilde{x}+\widetilde{y})|
& =|f(x+y)+[T^*f](u+v)| \\ &\leq \|f\|_{X^*}\|x+y\| +
\|T^*f\|_Y\|u+v\| \\
&\leq 2(\|f\|_{X^*}+\|T^*f\|)=2,
\end{align*}
it follows that $\|x+y\|=2$ and $\|u+v\|=2$. Since $X$ and $Z$ are both strictly convex, it follows that $x=y$ and $u=v$, so $\widetilde{x}=\widetilde{y}$.

(d) is proved in \cite[Theorem~9(4)]{DebsGodSR}: a bounded linear functional attains its supremum on $B_{(X,|\cdot|)}$ if and only if it attains its supremum on both $B_X$ and $S(B_Z)$, but all functionals attain their maxima on the weakly compact set $S(B_Z)$.
\end{proof}

The second goal in this section is to introduce the concepts of
modesty and weak-star
modesty of subspaces of a Banach space and to present some properties which will be important in our further discussion.

\begin{definition} \label{def-operator range+modest}
A linear subspace $Y$ of a Banach space $X$ is said to be an \emph{operator range} if there is an infinite-dimensional Banach space $E$ and a bounded injective operator $T\colon  E \longrightarrow X$ such that $T(E) = Y$. A linear subspace $Z \subset X$  is said to be \emph{modest} if there is a separable dense operator range $Y \subset X$ such that $Z \cap Y = \{0\}$. If $X$ is a dual space, a  linear subspace $Z \subset X$  is said to be \emph{weak-star modest} if there is a separable weak-star dense operator range $Y \subset X$ such that $Z \cap Y = \{0\}$.
\end{definition}

The study of dense operator ranges in Hilbert spaces goes back to
Dixmier, and many results were given by Fillmore and Williams (see
\cite{Fill-Will}). The extension of this study to operator ranges in
Banach spaces has attracted the attention of many mathematicians since
the domain of a closed operator between Banach spaces is an operator
range and every operator range is the domain of some closed linear
operator. We refer to the paper \cite{CrossOstrovskiiShevchik} (and
references therein) for a detailed account of the known results about
operator ranges and also for references and background.

We would like
to emphasize some remarks. Let $X$ be a Banach space and let $Y$ be a
linear subspace. First, $Y$ is an operator range if and only if there
is a complete norm on $Y$ which is stronger than the restriction  of
the given norm of $X$ to $Y$, see \cite[Proposition~2.1]{Cross}; if $Y$ is dense, $Y$ is contained in a non-closed dense operator range if and only if it is non-barrelled, see \cite[Theorem~15.2.1]{Wilansky}; finally, the injectivity of $T$ in the definition of operator range can be substituted by the condition $\dim Y = \infty$, because for every non-injective $T\colon  E \longrightarrow X$ there is an injective $\widetilde T\colon  E/ \ker T \longrightarrow X$ with the same range.

Next, we would like to make some remarks about modest and weak-star
modest subspaces. The first observation is that in the definition of
modest (and weak-star modest) subspace, the space $E$ which is the
domain of $T$ can be supposed to be separable (just consider the
closed linear span of the inverse image of a dense subset of
$Y$). Actually, any  infinite-dimensional separable Banach space can be chosen to be the domain of the dense (or weak-star dense) operator range,
because every separable infinite-dimensional Banach space can be
densely and injectively embedded into any other separable
infinite-dimensional Banach space, and we may even suppose that the
operator $T$ is nuclear, see
\cite[Proposition~3.1]{CrossOstrovskiiShevchik} for both
results. We will often apply this remark in that the (weak-star) modesty of
  $Z\subset X$ can be witnessed by an operator range $Y=T(\ell_1)$. Remark also that, obviously, if a subspace is modest (or weak-star modest) then all smaller subspaces are also modest (or weak-star modest).

Here is the key example of a modest subspace.

\begin{prop}\label{modest-in-ell1}
The subspace $\Lin\{e_n\} \subset \ell_1$ consisting of all sequences
with finite support is modest.
\end{prop}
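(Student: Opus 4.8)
The plan is to exhibit an explicit separable dense operator range $Y\subset\ell_1$ with $Y\cap\Lin\{e_n\}=\{0\}$. The natural candidate for such a $Y$ is $T(\ell_1)$ for a suitable injective bounded operator $T\colon\ell_1\longrightarrow\ell_1$ whose range is dense but avoids all finitely supported sequences. To build $T$, I would fix a sequence of strictly positive reals $(\lambda_n)$ with $\lambda_n\to\infty$ but $\lambda_n$ growing slowly enough, together with a fixed vector $g=(g_k)\in\ell_1$ with all $g_k\neq 0$ (e.g.\ $g_k=2^{-k}$), and define $T$ on the basis by $T(e_n)=\frac{1}{\lambda_n}e_n + c_n\,g$ for appropriately chosen small scalars $c_n\neq 0$; equivalently, $T=D+u\otimes g$ where $D$ is the diagonal operator with entries $1/\lambda_n$ and $u\in\ell_\infty$ is the sequence $(c_n\lambda_n)$ (rescaled), chosen in $c_0$ so that $T$ is bounded. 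The rank-one perturbation by $g$ is what forces every nonzero element of the range to have full support, while the diagonal part $D$ guarantees density and, after checking injectivity, completeness of the pulled-back norm.

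First I would verify that $T$ is bounded and injective: boundedness is immediate from $D$ being bounded and the perturbation being rank one with both factors in the right spaces, and injectivity follows because $T(x)=0$ forces $Dx=-\langle u,x\rangle g$, and since $g$ is not in the range of a diagonal operator unless the coefficient is zero (comparing supports/decay), one gets $\langle u,x\rangle=0$ and then $Dx=0$, hence $x=0$. Second, density of $T(\ell_1)$: since $D$ already has dense range in $\ell_1$ (its range contains all finitely supported sequences) and $T$ differs from $D$ by a compact — indeed rank-one — operator, a short argument shows $\overline{T(\ell_1)}\supseteq\overline{D(\ell_1)}=\ell_1$; concretely, for finitely supported $v$ one can solve $Dx=v$ and then correct the single vector $\langle u,x\rangle g$ by adding a further small-norm element of the range. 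Third — and this is the point of the construction — I would show $T(\ell_1)\cap\Lin\{e_n\}=\{0\}$: if $T(x)=y$ is finitely supported, then for all large $k$ we have $\frac{x_k}{\lambda_k}+\langle u,x\rangle g_k=0$, so $x_k=-\lambda_k\langle u,x\rangle g_k$; feeding this back into $\langle u,x\rangle=\sum_n u_n x_n$ and using $\sum_n |u_n|\lambda_n |g_n|<\infty$ (arranged by the choice of $u$ and $g$) yields a linear equation in the single scalar $\langle u,x\rangle$ that forces it to be $0$ unless a resonance is avoided, which we avoid by the choice of constants; then $x_k=0$ for all large $k$, and running the finitely many remaining coordinates backwards gives $x=0$, hence $y=0$.

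The main obstacle I expect is the bookkeeping in the third step: one must choose $(\lambda_n)$, $g$, and the perturbation vector simultaneously so that (i) $T$ is bounded, (ii) $T$ is injective, (iii) $D$ has dense range (automatic), and (iv) the resonance scalar that appears when intersecting with $\Lin\{e_n\}$ is genuinely nonzero, so that the only finitely supported vector in the range is $0$. These constraints pull in slightly different directions — boundedness wants the perturbation small, while the support-filling and anti-resonance conditions need it nonzero and spread out — so the delicate part is verifying they are mutually compatible; a clean way is to pick $g_k=2^{-k}$, $\lambda_n=n$, and $c_n$ of the form $\varepsilon\,2^{-n}$ with $\varepsilon$ small, and then check each condition in turn. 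Once the operator is in hand, the statement follows directly from Definition~\ref{def-operator range+modest}, taking $E=\ell_1$ and the operator range $Y=T(\ell_1)$.
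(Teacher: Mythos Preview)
Your construction fails at the crucial third step. With your choices $\lambda_n=n$, $g_k=2^{-k}$, $c_n=\varepsilon\,2^{-n}$, note first that $g$ \emph{does} lie in the range of $D$, since $D^{-1}g=(k\,2^{-k})_k\in\ell_1$; so already your injectivity argument as written is off. More importantly, set $\alpha=\langle c,x\rangle$. If $T(x)=y$ has support in $\{1,\dots,N\}$, then $x_k=k(y_k-\alpha\,2^{-k})$ for every $k$, and substituting back yields
\[
\alpha\Bigl(1+\sum_{n\ge1} c_n\, n\,2^{-n}\Bigr)=\sum_{n\le N} c_n\, n\, y_n.
\]
Avoiding the ``resonance'' only makes the bracket nonzero, so this equation \emph{determines} $\alpha$ from $y$; it does not force $\alpha=0$. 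With that $\alpha$ the vector $x=\bigl(k(y_k-\alpha\,2^{-k})\bigr)_k$ lies in $\ell_1$ (because $\sum_k k\,2^{-k}<\infty$) and satisfies $T(x)=y$. Hence every finitely supported $y$ belongs to $T(\ell_1)$, i.e., $T(\ell_1)\cap\Lin\{e_n\}=\Lin\{e_n\}$, not $\{0\}$. Choosing instead $g\notin\mathrm{ran}(D)$ (e.g.\ $g_k=1/k^2$) does force $\alpha=0$, but then the only remaining constraint on $y$ is the single linear equation $\sum_{n\le N}c_n\lambda_n y_n=0$, so $T(\ell_1)\cap\Lin\{e_n\}$ is a hyperplane of $\Lin\{e_n\}$, still infinite-dimensional. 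A diagonal-plus-rank-one operator simply cannot impose the infinitely many independent constraints needed to exclude all nonzero finitely supported vectors from its range.

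The paper's argument is of a different nature and supplies exactly this rigidity: it maps the real disc algebra $A_r(\D)$ into $\ell_1$ by $Tf=\bigl(f(t_1),\tfrac12 f(t_2),\tfrac14 f(t_3),\dots\bigr)$ with $t_n=2^{-n}$, and invokes the identity theorem for analytic functions to conclude that a nonzero $f$ can vanish at only finitely many of the $t_n$. Thus every nonzero element of the range has at most finitely many zero coordinates and therefore cannot be finitely supported; density of the range is checked by approximating each $e_m$ via powers of a suitable peaking function.
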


This results immediately from the following lemma which will also be
useful later on.

\begin{lemma} \label{Lem-op-im-inell1}
There is a dense operator range $Y \subset \ell_1$ such that every
non-zero element of $Y$ has a finite number of zero coordinates.
\end{lemma}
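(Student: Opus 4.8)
The plan is to exhibit $Y$ explicitly as the range of a norm‑one operator $T\colon\ell_1\longrightarrow\ell_1$ built from geometric sequences. Fix $r_n=1/(n+1)$ for $n\in\N$ and put $h_n=(r_n,r_n^2,r_n^3,\dots)\in\ell_1$; each $h_n$ has all coordinates strictly positive, and $\|h_n\|_1=r_n/(1-r_n)=1/n$. I would define $T(a)=\sum_{n=1}^\infty a_nh_n$ for $a\in\ell_1$: the bound $\sum_n|a_n|\,\|h_n\|_1\le\|a\|_1$ makes the series absolutely convergent in $\ell_1$ and shows $\|T\|\le1$, and the $m$‑th coordinate of $Ta$ is $(Ta)_m=\sum_{n=1}^\infty a_nr_n^m$. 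The claim is that $Y:=T(\ell_1)$ does the job.

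For density I would use that $Y\supseteq\Lin\{h_n:n\in\N\}$, so it is enough to show that this subspace has trivial annihilator in $\ell_\infty=(\ell_1)^*$. If $f=(f_m)\in\ell_\infty$ kills all $h_n$, then the function $F(z)=\sum_{m\ge1}f_mz^m$, holomorphic on $\D$, vanishes along the sequence $(r_n)$, which accumulates at $0\in\D$; hence $F\equiv0$ and $f=0$, so $\overline Y=\ell_1$.

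For the coordinate condition, given $a\in\ell_1\setminus\{0\}$ I would set $n_0=\min\{n:a_n\ne0\}$ and isolate the leading term:
\[
(n_0+1)^m(Ta)_m=a_{n_0}+\sum_{n>n_0}a_n\Bigl(\tfrac{n_0+1}{n+1}\Bigr)^{m}\qquad(m\in\N).
\]
Since $\tfrac{n_0+1}{n+1}\in(0,1)$ for every $n>n_0$, each tail summand is bounded by $|a_n|$ and tends to $0$ as $m\to\infty$, so by dominated convergence the tail tends to $0$ and $(n_0+1)^m(Ta)_m\to a_{n_0}\ne0$. Hence $(Ta)_m\ne0$ for all large $m$, i.e.\ every non‑zero element of $Y$ has only finitely many zero coordinates. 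Reading this contrapositively also gives $\ker T=\{0\}$, so $Y$ is a genuine separable operator range, as required. (Proposition~\ref{modest-in-ell1} is then immediate, since any non‑zero finitely supported vector has cofinitely many zero coordinates and therefore cannot lie in $Y$.)

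The one genuinely delicate point is this coordinate condition. Naive diagonal or triangular choices of $T$ permit infinite cancellation in the tails—a cleverly chosen alternating‑sign $a$ can force $(Ta)_m=0$ along an infinite set of $m$—and the purpose of the strictly decreasing geometric ratios $r_n$ is precisely to make the leading term $a_{n_0}r_{n_0}^{\,m}$ dominate the entire remaining tail $\sum_{n>n_0}a_nr_n^m$ once $m$ is large, which is exactly what rules such cancellation out.
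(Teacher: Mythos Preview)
Your argument is correct. Both constructions ultimately rest on the identity theorem for holomorphic functions, but they deploy it in different places and build the operator from different domains. The paper works with the real disc algebra $A_r(\D)$ and sends $f$ to the weighted sequence of its values $(2^{-(n-1)}f(2^{-n}))_n$; the finitely-many-zeros property is then an immediate consequence of the identity theorem applied to $f$ itself, while density of the range is obtained by an explicit approximation of each $e_m$ by powers of a well-chosen polynomial. You instead take $\ell_1$ as the domain and map $a$ to the sequence of ``moments'' $m\mapsto\sum_n a_n r_n^m$; here the identity theorem appears only in the density step (killing the annihilator in $\ell_\infty$ via the power series $\sum_m f_m z^m$), and the finitely-many-zeros property is obtained by the elementary dominant-term estimate with dominated convergence. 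Your route has the mild advantage that the domain is $\ell_1$ from the outset, which is convenient for the later applications in the paper (cf.\ the remark after Definition~\ref{def-operator range+modest}), and the density argument is slicker than the paper's explicit approximation; on the other hand the paper's version makes the analyticity mechanism behind the zero-set condition completely transparent.
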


\begin{proof}
Let $\D$ be the closed unit disc, and let $A(\D)$ be the disc algebra
consisting of all continuous  functions on $\D$ that are analytic on
the interior of $\D$, viewed as a real Banach space. We let $A_r(\D) \subset A(\D)$ be the closed real subspace consisting of those $f$ that take real values on the real axis, and denote $t_n = 2^{-n}$ for every $n\in \N$. We define $T\colon A_r(\D)\longrightarrow
\ell_1$ by
$$
Tf = \left(f(t_1), \frac12 f(t_2), \frac14 f(t_3), \ldots \right)
\qquad \bigl(f\in A_r(\D)\bigr).
$$
Then, the identity theorem for analytic functions
implies that in $Y := T(A_r(\D))$ every non-zero element has a finite
number of zero coordinates (if any). It remains to demonstrate the
density of $Y$ in $\ell_1$. To this end it is sufficient to show that
every element $e_m$ of the canonical basis of $\ell_1$ belongs to the
closure of $Y$. Indeed, for a fixed $m\in \N$, consider the function
$f(z) = \frac{1}{4}(4 - (z - t_m)^2)$ for every $z\in \D$. This $f \in
A_r(\D)$ takes the value $1$ at $t_m$ and $ 0 < f(t_k) < 1 $ for all
$k \neq m$. Denote $f_n = f^n \in A_r(\D)$. Then,
$\lim_{n\to\infty}f_n(t_m) = 1$, and $\lim_{n\to\infty}f_n(t_k) = 0$
for $k \neq m$, indeed $\lim_{n\to\infty} \sup_{k\neq m} |f_n(t_k)| =0$, hence
$\lim_{n\to\infty}Tf_n = e_m/2^{m-1}$, and $e_m$ is in the closure of~$Y$.
\end{proof}

In fact, Proposition~\ref{modest-in-ell1} can be generalised.

\begin{prop} \label{prop countable-modest}
For every separable Banach space $X$ every subspace with a countable
Hamel basis is modest.
\end{prop}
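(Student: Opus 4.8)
The plan is to reduce the assertion to Proposition~\ref{modest-in-ell1} by transporting into $X$, through a suitable bounded injective operator $\Phi\colon\ell_1\longrightarrow X$ with dense range, the dense operator range inside $\ell_1$ that avoids $\Lin\{e_n\}$. We may and do assume $X$ is infinite-dimensional. Let $\{z_k\}$ be a Hamel basis of $Z$ (finite or countable, possibly empty) and fix a sequence $\{x_k\}$ dense in $X$. Suppose for the moment that we have produced a bounded injective operator $\Phi\colon\ell_1\longrightarrow X$ with dense range such that $Z\subset\Phi\bigl(\Lin\{e_n\}\bigr)$. By Proposition~\ref{modest-in-ell1} there is a separable dense operator range $Y_0\subset\ell_1$ with $Y_0\cap\Lin\{e_n\}=\{0\}$; writing $Y_0=T_0(E_0)$ with $T_0$ bounded and injective, put $Y:=\Phi(Y_0)$. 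Then $Y=(\Phi\circ T_0)(E_0)$ is an operator range (the composition $\Phi\circ T_0$ is bounded and injective), it is separable as a continuous image of $Y_0$, and it is dense in $X$ because $\Phi$ has dense range and $Y_0$ is dense in $\ell_1$. Finally, if $w\in Y\cap Z$, write $w=\Phi(\eta)$ with $\eta\in Y_0$ and also $w=\Phi(\xi)$ with $\xi\in\Lin\{e_n\}$ (possible since $Z\subset\Phi(\Lin\{e_n\})$); injectivity of $\Phi$ forces $\eta=\xi\in Y_0\cap\Lin\{e_n\}=\{0\}$, so $w=0$. Hence $Y$ witnesses the modesty of $Z$, and the whole proof comes down to constructing $\Phi$.

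To build $\Phi$ it suffices to produce a sequence $\{v_n\}\subset B_X$ together with biorthogonal functionals $\{v_n^*\}\subset X^*$, i.e.\ $v_i^*(v_j)=\delta_{ij}$, such that $\Lin\{v_n\}$ contains every $z_k$ and every $x_k$. Indeed, $\Phi(e_n):=v_n$ then extends to a bounded operator on $\ell_1$ (of norm at most $1$), which is injective because $a=(a_n)\in\ell_1$ with $\sum_na_nv_n=0$ implies $a_m=v_m^*\bigl(\sum_na_nv_n\bigr)=v_m^*(0)=0$ for every $m$; it has dense range because its range contains $\Phi(\Lin\{e_n\})=\Lin\{v_n\}\supset\{x_k\}$; and it satisfies $Z\subset\Lin\{v_n\}=\Phi(\Lin\{e_n\})$. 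The system $\{(v_n,v_n^*)\}$ is built by induction, in the spirit of Markushevich's construction of a biorthogonal system, performing only the ``forward'' absorption steps. Arrange all the $z_k$ and all the $x_k$ into a single list $w_1,w_2,\dots$. Suppose $v_1,\dots,v_n$ are linearly independent, $v_1^*,\dots,v_n^*\in X^*$ satisfy $v_i^*(v_j)=\delta_{ij}$, and $V_n:=\Lin\{v_1,\dots,v_n\}$ contains an initial segment of the list. Let $w$ be the first entry of the list not lying in $V_n$ and let $v_{n+1}$ be $w-\sum_{i\le n}v_i^*(w)v_i$ rescaled to norm one. Then $v_i^*(v_{n+1})=0$ for $i\le n$, the vectors $v_1,\dots,v_{n+1}$ are independent, and $w\in V_{n+1}:=\Lin\{v_1,\dots,v_{n+1}\}$; since $v_{n+1}\notin V_n$, the Hahn--Banach theorem provides $v_{n+1}^*\in X^*$ vanishing on $V_n$ with $v_{n+1}^*(v_{n+1})=1$. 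Iterating, $\{(v_n,v_n^*)\}$ is a biorthogonal system and $\Lin\{v_n\}=\bigcup_nV_n$ contains every entry of the list, i.e.\ every $z_k$ and every $x_k$, as required.

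There are two points to watch, the second being the genuine difficulty. First, one must see the induction never stalls, i.e.\ that a new $v_{n+1}$ keeps being created: this holds because every $V_n$ is finite-dimensional while $X$ is infinite-dimensional, so the dense sequence $\{x_k\}$ cannot lie inside any $V_n$; hence after each stage the list still contains an entry outside the current $V_n$, and that entry triggers the next step (in fact the index of the absorbed entry strictly increases, so every $w_k$ is eventually caught). Second --- and this is where the argument must be organized carefully --- one has to meet simultaneously two competing requirements: the linear span must grow so as to absorb the prescribed countably many vectors $z_k$ and $x_k$, and yet the minimality of the infinite system must never be destroyed. The correction step $v_{n+1}=w-\sum_{i\le n}v_i^*(w)v_i$ is precisely what reconciles these, and it is the continuity of the functionals $v_n^*$ that promotes the finite-stage biorthogonal relations to genuine minimality of $\{v_n\}$ (were $v_j$ in the closed linear span of $\{v_i:i\ne j\}$, applying $v_j^*$ would give $1=0$). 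The remaining verifications --- that $\Phi$ is bounded, injective and of dense range, and that $Y=\Phi(Y_0)$ is a separable dense operator range meeting $Z$ only at $0$ --- are then routine.
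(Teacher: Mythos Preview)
Your proof is correct and follows essentially the same route as the paper's. Both arguments enlarge the given subspace to one that is dense with countable Hamel basis, build a bounded biorthogonal system $\{(v_n,v_n^*)\}$ spanning it (the paper cites the Markushevich construction from \cite[Proposition~1.f.3]{Lin-Tza-I}, you carry it out by hand), define the resulting bounded operator $\ell_1\to X$, and then pull back the dense operator range in $\ell_1$ that misses $\Lin\{e_n\}$; the only cosmetic difference is that you invoke Proposition~\ref{modest-in-ell1} while the paper invokes Lemma~\ref{Lem-op-im-inell1} directly and uses the biorthogonal functionals to verify trivial intersection, which amounts to the same computation as your injectivity-of-$\Phi$ step.
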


\begin{proof}
For every subspace $W$ with a countable Hamel basis, there is a dense
subspace $W_1 \supset W$ with a countable Hamel basis, say
$\{w_n\colon n\in \N\}$. The construction of \cite[Proposition~1.f.3]{Lin-Tza-I} provides us with sequences $(v_n)$ in $X$ and $(v_n^*)$ in $X^*$ such that
$\Lin\{v_1,\dots,v_n\} = \Lin\{w_1,\dots,w_n\}$ and $v_n^*(v_m) =
\delta_{m,n}$ for all $m$ and~$n$. Upon replacing $v_n$ by
$v_n/\|v_n\|$ and $v_n^*$ by $\|v_n\|v_n^*$ we may assume that $(v_n)$ is
bounded.

Consider now the bounded linear operator $S\colon \ell_1\longrightarrow X$
defined by $S(x)=\sum_{n=1}^\infty x(n) v_n$ for every $x\in
\ell_1$ and  let $T\colon A_r(\D)\longrightarrow \ell_1$ be the operator
defined in Lemma~\ref{Lem-op-im-inell1}. Then,
$\widetilde{T}=S\circ T\colon A_r(\D)\longrightarrow X$ is bounded, has
dense range since $T(A_r(\D))$ is dense in $\ell_1$ and $S(\ell_1)
\supset W_1$ is dense in $X$. Finally, $\widetilde{T}(A_r(\D)) \cap
W_1=\{0\}$. Indeed, if $w= \sum_{k=1}^n \alpha_k v_k \in W_1$ has the form
$\sum_{k=1}^\infty \frac{f(t_k)}{2^{k-1}} v_k$, then apply $v_l^*$ to
these series to see that $\alpha_l= f(t_l)/2^{l-1}$ for $l\le n$ and
$0= f(t_l)/2^{l-1}$ for $l> n$. The latter implies $f=0$ since $f$ is
analytic, therefore $w=0$.

This shows that $W_1$ is modest and so is the smaller
subspace $W$.
\end{proof}

We would like to comment that the gist of the construction of a
Markushevich basis in
\cite[Proposition~1.f.3]{Lin-Tza-I} alluded to above is the
Gram-Schmidt orthogonalisation. Indeed, let $J\colon X\longrightarrow
H$ be an injective bounded linear operator into a Hilbert space with
dense range; for example, embed $X$ isometrically into $C[0,1]$ and
further continuously into $L_2[0,1]$, and let $H\subset L_2[0,1]$ be
the closure of the image of $X$ in $L_2[0,1]$. Then perform the
Gram-Schmidt procedure on the linearly independent sequence $(J(w_n))$
to obtain an orthogonal basis $(h_n)\subset J(X)$ for
$H=H^*$. Finally, put $v_n= J^{-1}(h_n)$ and $v_n^*=J^*(h_n)$,
i.e., $v_n^*(x)= \langle Jx, h_n\rangle_H$.

We next present a known result about operator ranges which we will use
later on.

\begin{prop}[\mbox{\cite[Proposition~2.6]{Cross}}]\label{two operator ranges}
In every separable infinite-dimensional Banach space $X$ there are two dense operator ranges with trivial intersection.
\end{prop}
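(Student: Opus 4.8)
The plan is to reduce to the concrete space $\ell_1$ and there exhibit two explicit dense operator ranges with trivial intersection, both modelled on Lemma~\ref{Lem-op-im-inell1}. For the reduction, observe that, since $X$ is separable, one may apply the Markushevich basis construction of \cite[Proposition~1.f.3]{Lin-Tza-I} to a sequence with dense linear span in $X$, exactly as in the proof of Proposition~\ref{prop countable-modest}, to obtain a bounded \emph{injective} operator $S\colon \ell_1 \to X$ with dense range. If $Y_1, Y_2 \subset \ell_1$ are dense operator ranges, say $Y_i = T_i(E_i)$, with $Y_1 \cap Y_2 = \{0\}$, then $S(Y_1)$ and $S(Y_2)$ are again operator ranges (they are the ranges of the injective operators $S\circ T_i$ defined on the infinite-dimensional Banach spaces $E_i$), they are dense because $S$ has dense range and the $Y_i$ are dense, and $S(Y_1) \cap S(Y_2) = S(Y_1 \cap Y_2) = \{0\}$ by injectivity of $S$. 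Hence it suffices to treat $X = \ell_1$.

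In $\ell_1$ I would take $Y_1 := T(A_r(\D))$, the dense operator range of Lemma~\ref{Lem-op-im-inell1}, where $(Tf)_n = f(2^{-n})/2^{n-1}$, and let $Y_2 := T'(A_r(\D))$ with $T'\colon A_r(\D)\to\ell_1$ defined by $(T'g)_n = g(2^{-n})/(n\,2^{n-1})$; equivalently, $Y_2 = D(Y_1)$ for the injective diagonal operator $Dx = (x_n/n)_n$ on $\ell_1$. The operator $T'$ is plainly bounded and injective, and repeating verbatim the density argument of Lemma~\ref{Lem-op-im-inell1} (using the powers $f^n$ of $f(z) = \frac14(4-(z-2^{-m})^2)$) shows that every $e_m$ lies in the closure of $Y_2$, so $Y_2$ is a dense operator range.

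The crux — and the step I expect to demand the most care — is to verify $Y_1 \cap Y_2 = \{0\}$. Suppose $y \in Y_1 \cap Y_2$ is nonzero and write $y = Tf = T'g$ with $f,g \in A_r(\D)$; then necessarily $f \ne 0 \ne g$, and comparing coordinates gives $f(2^{-n}) = g(2^{-n})/n$ for all $n$. Factoring out the zeros at the origin, write $f(z) = z^m\phi(z)$ and $g(z) = z^k\psi(z)$ with $\phi,\psi$ analytic and $\phi(0)\ne 0\ne\psi(0)$; the relation becomes $n\,2^{-n(m-k)}\phi(2^{-n}) = \psi(2^{-n})$ for all $n$. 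Letting $n\to\infty$, the right-hand side converges to $\psi(0)\in\R\setminus\{0\}$ while $\phi(2^{-n})\to\phi(0)\ne 0$, so $n\,2^{-n(m-k)}$ would have to converge to the nonzero finite number $\psi(0)/\phi(0)$; but $n\,2^{-n(m-k)}$ tends to $0$ when $m-k\ge 1$ and to $\infty$ when $m-k\le 0$, never to a nonzero finite limit. This contradiction forces $y = 0$. The key point is thus that using the \emph{same} evaluation points $2^{-n}$ but with an extra polynomial weight is incompatible with the exponential decay of the weights $2^{-(n-1)}$, once the rigidity of analytic functions (finite order of vanishing at $0$) is invoked; a careless choice of second operator range — for instance different evaluation points, or a geometric reweighting — would fail to give trivial intersection, so this is exactly where the argument must be set up correctly.
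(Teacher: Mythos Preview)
The paper does not actually prove this proposition; it is quoted verbatim from \cite[Proposition~2.6]{Cross} and used as a black box. Your argument, by contrast, supplies a self-contained proof using only tools already developed in the paper, and it is correct. The reduction step is sound: the Markushevich basis from \cite[Proposition~1.f.3]{Lin-Tza-I} gives biorthogonal sequences $(v_n),(v_n^*)$ with $(v_n)$ bounded and of dense span, so $S\colon\ell_1\to X$, $Sx=\sum x_n v_n$, is bounded with dense range, and injectivity follows from $v_m^*(Sx)=x_m$. Pushing forward two dense operator ranges with trivial intersection through an injective dense-range operator clearly preserves all three properties. In $\ell_1$, your choice $Y_2=D(Y_1)$ with $Dx=(x_n/n)$ works: density of $Y_2$ follows exactly as in Lemma~\ref{Lem-op-im-inell1}, and the trivial-intersection argument is correct --- writing $f=z^m\phi$, $g=z^k\psi$ with $\phi(0),\psi(0)\neq 0$ (legitimate since nonzero functions in $A_r(\D)$ have finite-order zeros at~$0$) and analysing the limit of $n\,2^{-n(m-k)}$ gives the contradiction in every case.

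So your proof is a genuine addition rather than a reproduction of the paper's argument, and it has the virtue of staying entirely within the circle of ideas (disc algebra, evaluation at $2^{-n}$, Markushevich bases) that the paper already uses. Your closing remark about geometric reweightings failing is also apt: for instance $Dx=(x_n/2^n)$ would give $Y_1\cap Y_2\ni T(z)=T'(1)\neq 0$.
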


The main property of operator ranges which we will need in the paper is the following one.

\begin{prop}  \label{prop-modest-dense}
Let  $Y \subset X$ be a separable operator range. Then, there is an injective norm-one linear operator $T\colon  \ell_1 \longrightarrow Y$ such that the set $\left\{\frac{Te_n}{\|Te_n\|}\colon n \in \N \right\}$ is dense in $S_Y$.
\end{prop}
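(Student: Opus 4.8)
The plan is to build the vectors $a_n:=Te_n\in Y$ by hand and to read off injectivity of $T$ from a ``triangular'' support condition relative to a biorthogonal system of $Y$. Write $\|\cdot\|$ for the norm of $X$, so that $S_Y=\{y\in Y\colon\|y\|=1\}$ and $\|Te_n\|=\|Te_n\|_X$. Using the characterisation of operator ranges recalled above I would first fix a complete norm $\|\cdot\|_Y$ on $Y$ with $\|y\|\le\|y\|_Y$ for all $y\in Y$; then $(Y,\|\cdot\|_Y)$ is a separable infinite-dimensional Banach space, so by the construction in \cite[Proposition~1.f.3]{Lin-Tza-I} it has a Markushevich basis $(u_k,u_k^*)_{k\in\N}$ with $\|u_k\|_Y=1$, $u_k^*(u_j)=\delta_{j,k}$, $u_k^*\in(Y,\|\cdot\|_Y)^*$ and $\overline{\Lin}\{u_k\colon k\in\N\}=Y$. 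Since $\Lin\{u_k\colon k\in\N\}$ is $\|\cdot\|_Y$-dense, hence $\|\cdot\|$-dense, in $Y$, its $\|\cdot\|$-unit vectors form a $\|\cdot\|$-dense subset of $S_Y$, and in particular $S_Y$ has a countable $\|\cdot\|$-dense subset.

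Next I would fix unit vectors $v_n\in S_Y\cap\Lin\{u_1,\dots,u_{n-1}\}$ for $n\ge2$ such that every point of some fixed countable $\|\cdot\|$-dense subset of $S_Y$ is a $\|\cdot\|$-cluster point of the sequence $(v_n)_{n\ge2}$; this is possible because a finite combination of $u_1,\dots,u_K$ may be used as $v_n$ for every $n>K$, so the approximations of the targets can be dovetailed, each target being approximated infinitely often. Put $t_n:=2^{-n}$, $a_1:=\sigma_1u_1$ and $a_n:=\sigma_n(v_n+t_nu_n)$ for $n\ge2$, with positive scalars $\sigma_n$ still to be chosen. The structural points are: $a_n\in\Lin\{u_1,\dots,u_n\}$; the coefficient $u_n^*(a_n)=\sigma_nt_n$ (resp.\ $u_1^*(a_1)=\sigma_1$) is nonzero; and $\|a_n/\|a_n\|-v_n\|\le 2^{1-n}$, because $\|u_n\|\le\|u_n\|_Y=1$ forces $\|v_n+t_nu_n\|\in[1-t_n,1+t_n]$.

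The choice of the $\sigma_n$ is the only point needing care. After the $v_n$ are fixed, I would choose them recursively so as to force, for every $n$,
$$
\|u_n^*\|\sum_{m>n}\|a_m\|_Y\ \le\ \tfrac12\,|u_n^*(a_n)|,
$$
together with $\sum_m\|a_m\|_Y<\infty$. Since $\|a_m\|_Y\le\sigma_m(\|v_m\|_Y+1)$, and $\|v_m\|_Y$, $\|u_n^*\|$, $t_n$ are fixed positive numbers, this reduces to a Neumann-type recursion for the tails $\sum_{m>n}\sigma_m(\|v_m\|_Y+1)$, which is routinely solved with $(\sigma_n)$ decaying super-geometrically. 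For such $\sigma_n$, the map $T\colon\ell_1\to Y$, $Te_n=a_n$, is a well-defined bounded operator into $(Y,\|\cdot\|_Y)$, hence into $X$; replacing $T$ by $T/\|T\|$ we may assume $\|T\|=1$, and this changes neither the normalised images $a_n/\|a_n\|$ nor the displayed inequality.

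Injectivity is then immediate: if $x\in\ell_1$ and $\sum_m x_ma_m=0$ (a series converging in $(Y,\|\cdot\|_Y)$), applying $u_n^*$ annihilates every term with $m<n$ (as $a_m\in\Lin\{u_1,\dots,u_m\}$ and $m<n$), so $x_nu_n^*(a_n)=-\sum_{m>n}x_mu_n^*(a_m)$ and hence
$$
|x_n|\,|u_n^*(a_n)|\le\|u_n^*\|\bigl(\sup_m|x_m|\bigr)\sum_{m>n}\|a_m\|_Y\le\tfrac12\,|u_n^*(a_n)|\sup_m|x_m|;
$$
since $u_n^*(a_n)\neq0$ this yields $\sup_n|x_n|\le\tfrac12\sup_n|x_n|$, i.e.\ $x=0$. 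Density of $\{Te_n/\|Te_n\|\colon n\in\N\}$ in $S_Y$ follows from $\|a_n/\|a_n\|-v_n\|\le 2^{1-n}$ and the fact that the $v_n$ cluster at every point of a dense subset of $S_Y$. The main obstacle is precisely the simultaneous validity of the displayed inequality for all $n$; the triangular condition $v_n\in\Lin\{u_1,\dots,u_{n-1}\}$ — which merely restricts, harmlessly, which target a given $v_n$ may approximate — is what turns this into bookkeeping with geometric series, and is where the operator-range hypothesis is genuinely used, through the complete stronger norm $\|\cdot\|_Y$ and its Markushevich basis.
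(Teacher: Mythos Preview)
Your argument is correct. The only slip is the estimate $\|a_n/\|a_n\|-v_n\|\le 2^{1-n}$; the honest bound is $2t_n/(1-t_n)$, but this still tends to~$0$ and nothing else is affected. The recursive choice of the $\sigma_n$ is routine as you say: with $\sigma_1=1$ and, for $m\ge2$, $\sigma_m:=c_m^{-1}\min_{1\le n<m}2^{-(m-n)}|u_n^*(a_n)|/(2\|u_n^*\|)$ (where $c_m=\|v_m\|_Y+1$), the tail estimate $\sum_{m>n}\sigma_m c_m\le\tfrac12|u_n^*(a_n)|/\|u_n^*\|$ holds for every~$n$ by summing a geometric series.

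The route, however, differs genuinely from the paper's. Both proofs share the skeleton ``take a dense sequence in $S_Y$, perturb each term by a small vector so as to force injectivity of~$T$''. The paper secures injectivity by a \emph{transversality} device: it fixes an injective $T_1\colon E\to Y$, invokes Proposition~\ref{two operator ranges} to obtain two dense operator ranges $E_1,E_2\subset E$ with $E_1\cap E_2=\{0\}$, takes the dense sequence inside $T_1(E_1)$, and perturbs each preimage $u_n\in E_1$ by $\eps_n U_2 e_n\in E_2$; injectivity of $T$ then reduces to $E_1\cap E_2=\{0\}$ and injectivity of~$U_2$. You secure injectivity by a \emph{triangularity} device: a Markushevich basis $(u_k,u_k^*)$ of $(Y,\|\cdot\|_Y)$, the constraint $v_n\in\Lin\{u_1,\dots,u_{n-1}\}$, and the perturbation by $t_n u_n$ make the ``matrix'' of $T$ upper triangular with nonzero diagonal, whence a diagonal-dominance estimate finishes. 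Your version is more self-contained---it avoids the cited Proposition~\ref{two operator ranges} and Lemma~\ref{lemma-modest-coefficients-abs-conv}, needing only the standard Markushevich basis construction---at the price of the explicit bookkeeping for the~$\sigma_n$; the paper's argument is conceptually cleaner once those auxiliary results are taken for granted.
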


We need the following technical result to provide the proof of the proposition.

\begin{lemma}  \label{lemma-modest-coefficients-abs-conv}
Let $X$ be a Banach space and let  $Y \subset X$ be an operator range. Then, for every sequence $\{u_n\}$ in $Y$, there is a sequence of positive reals $\{s_n\}$ in $(0,1]$ such that for every $x=(x_1, x_2, \ldots) \in \ell_1$ we have $\sum_n  s_n x_n u_n \in Y$.
\end{lemma}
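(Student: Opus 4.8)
Since $Y\subset X$ is an operator range, by definition (and the remarks after Definition~\ref{def-operator range+modest}) there is an infinite-dimensional separable Banach space $E$ and an injective bounded operator $T\colon E\longrightarrow X$ with $T(E)=Y$; equivalently, by \cite[Proposition~2.1]{Cross}, $Y$ carries a complete norm $\Norm\cdot\Norm_Y$ stronger than the restriction of $\|\cdot\|_X$, namely $\Norm y\Norm_Y=\|T^{-1}y\|_E$. The plan is to work entirely with this stronger norm. For the given sequence $\{u_n\}\subset Y$, each $u_n$ has a finite $\Norm\cdot\Norm_Y$-norm, so I would simply set $s_n=\min\{1,\ 2^{-n}/(1+\Norm u_n\Norm_Y)\}\in(0,1]$, which forces $\sum_n \Norm s_n u_n\Norm_Y\leq\sum_n 2^{-n}<\infty$.

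Then for any $x=(x_1,x_2,\dots)\in\ell_1$ I would show the series $\sum_n s_n x_n u_n$ converges in the Banach space $(Y,\Norm\cdot\Norm_Y)$: indeed $\sum_n \Norm s_n x_n u_n\Norm_Y=\sum_n |x_n|\,\Norm s_n u_n\Norm_Y\leq \bigl(\sup_k \Norm s_k u_k\Norm_Y\bigr)\sum_n|x_n|<\infty$, so the partial sums are $\Norm\cdot\Norm_Y$-Cauchy and converge to some $y\in Y$ by completeness of $\Norm\cdot\Norm_Y$. Since $\Norm\cdot\Norm_Y$ dominates $\|\cdot\|_X$ on $Y$, the same partial sums also converge to $y$ in $X$, hence the sum $\sum_n s_n x_n u_n$, computed in $X$, lies in $Y$. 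This is exactly the assertion.

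The only point requiring a little care — and the one I would flag as the crux — is the passage between the two norms: one must be sure that ``$\sum_n s_n x_n u_n\in Y$'' is to be read as the limit of partial sums taken in $X$ (the ambient Banach space), and then argue that this $X$-limit coincides with the $\Norm\cdot\Norm_Y$-limit, which is legitimate because $\Norm\cdot\Norm_Y\geq c\|\cdot\|_X$ on $Y$ for some $c>0$ (equivalently, the inclusion $(Y,\Norm\cdot\Norm_Y)\hookrightarrow X$ is continuous). Everything else is the routine absolute-convergence estimate above; no structural obstacle arises, and in particular separability of $E$ is not even needed for this lemma, only completeness of the operator-range norm.
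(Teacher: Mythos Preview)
Your proof is correct and follows essentially the same route as the paper: the paper pulls the $u_n$ back to $E$ via $T^{-1}$, chooses $s_n=\min\{1,\|T^{-1}u_n\|^{-1}\}$ to make $\{s_n T^{-1}u_n\}$ bounded in $E$, and then uses absolute convergence in $E$ together with continuity of $T$ --- which is exactly your argument phrased in terms of the transported norm $\Norm y\Norm_Y=\|T^{-1}y\|_E$ on $Y$. Your extra factor $2^{-n}$ is harmless overkill (your own estimate only uses $\sup_k\Norm s_k u_k\Norm_Y<\infty$), and your remark that separability of $E$ is irrelevant here is accurate.
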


\begin{proof}
By definition, there is a Banach space $E$ and a bounded bijective
linear operator $T\colon  E \longrightarrow Y$. To complete the proof it is
sufficient to take $s_n =  \min\{1, \|T^{-1}u_n\|^{-1}\}$ and remark
that the series  $\sum_n s_n x_n T^{-1} u_n$ converges absolutely for
each $x\in\ell_1$, say to  $e \in E$, so $\sum_n  s_n x_n u_n = T(e) \in Y$.
\end{proof}

\begin{proof}[Proof of Proposition~\ref{prop-modest-dense}]
By the remarks after Definition~\ref{def-operator range+modest}, there
is an infinite-dimensional separable Banach space $E$ and a bounded
injective linear operator $T_1\colon  E \longrightarrow Y$ with dense range.
Applying Proposition~\ref{two operator ranges}, we can find two dense
operator ranges $E_1, E_2 \subset E$ with trivial
intersection. Without loss of generality, we may assume the existence
of injective $U_1, U_2\colon \ell_1\longrightarrow E$ such that
$U_i(\ell_1) = E_i$, $i=1,2$ (see the remarks following
Definition~\ref{def-operator range+modest}).
Fix a countable dense subset $\{w_n\}_{n \in \N} \subset S_{T_1(E_1)}$, then
$\{w_n\}_{n \in \N}$ is dense in $S_Y$ as well.  Denote $u_n =
\frac{T_1^{-1}(w_n)}{\|T_1^{-1}(w_n)\|} \in E_1$, select the
corresponding sequence $\{s_n\}$ from
Lemma~\ref{lemma-modest-coefficients-abs-conv} and define the
requested operator $T\colon  \ell_1 \longrightarrow Y$ as follows:
$$
T(e_n) = s_n T_1\left( u_n + \eps_n U_2 e_n\right),
$$
where the $\eps_n \in (0,1)$ are small enough to ensure that $\left\| T_1^{-1}(w_n) \right\|   \eps_n \longrightarrow 0$. Then
\begin{align*}
\left\|\frac{Te_n}{\|Te_n\|} - w_n \right\| & =
\left\|\frac{ T_1\left( u_n + \eps_n U_2 e_n\right)}{\| T_1\left( u_n + \eps_n U_2 e_n\right)\|} - w_n \right\|
\\ &=
\left\|\frac{ w_n +\eps_n  \|T_1^{-1}(w_n)\| T_1(U_2 e_n)}{\bigl\|  w_n
    +\eps_n  \|T_1^{-1}(w_n)\| T_1(U_2 e_n) \bigr\|} - w_n \right\|
\xrightarrow[n\to\infty]\, 0.
\end{align*}
So,  $\left\{\frac{Te_n}{\|Te_n\|}\colon n \in \N \right\}$ is dense
in $S_Y$.
It remains to demonstrate that $T$ is injective. Assume that for some $x = (x_1, x_2, \ldots) \in \ell_1$
$$
Tx = \sum_{n \in \N} x_n Te_n = \sum_{n \in \N}  T_1\left(x_n  s_n u_n\right) +  T_1U_2 \left (\sum_{n \in \N} x_n s_n \eps_n e_n\right) =  0.
$$
Then,
$$
\sum_{n \in \N}  T_1\left(x_n s_n u_n\right)   = - T_1U_2 \left (\sum_{n \in \N} x_n s_n \eps_n e_n\right) ,
$$
and by the injectivity of $T_1$
$$
\sum_{n \in \N}  x_n  s_n u_n   = - U_2 \left (\sum_{n \in \N} x_n s_n \eps_n e_n\right).
$$
But the left hand side of the last equation belongs to $E_1$ and the
right hand side belongs to $E_2$, so both of them are equal to
$0$. Since $\{e_n\}_{n \in \N}$ forms a basis of $\ell_1$ and $U_2$ is injective, this implies that $x = 0$. Finally, the fact that $\|T\|=1$ can be obtained just by
dividing by its norm.
\end{proof}

Our last result in this section allows us to extend a modest subspace from a complemented subspace to the whole space, in some cases.

\begin{prop}  \label{prop_modest_sum}
Let $X$ be a Banach space such that $X = X_1 \oplus X_2$ for suitable
closed subspaces $X_1$ and $X_2$.
Writing $X^*$ in its canonical form   $X^* = X_1^* \oplus X_2^*$ we have the following.
\begin{enumerate}
  \item[(a)] If $X_1^*$ is weak-star separable and $F_2 \subset X_2^*$ is weak-star modest in $X_2^*$, then  $X_1^* \oplus  F_2$ is weak-star modest in $X^*$.
  \item[(b)] If $X_1^*$ is norm separable and $F_2 \subset X_2^*$ is modest in $X_2^*$, then  $X_1^* \oplus  F_2$ is modest in $X^*$.
\end{enumerate}
\end{prop}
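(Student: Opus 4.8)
The plan is to handle both parts at once, since (b) is obtained from (a) by replacing ``weak-star'' with ``norm'' throughout. By Definition~\ref{def-operator range+modest} the task is to produce a separable operator range $Y\subset X^*$ — weak-star dense for (a), norm dense for (b) — with $Y\cap(X_1^*\oplus F_2)=\{0\}$. The key first observation is that any such $Y$ must avoid \emph{all} of $X_1^*$ (since $X_1^*\subset X_1^*\oplus F_2$), so $Y$ cannot be a ``product'' operator range $Y_1\oplus Y_2$ with $Y_i\subseteq X_i^*$; the two coordinates of $Y$ must be linearly entangled. I therefore look for $Y$ in the ``graph'' form
$$
Y=\bigl\{\,(T_1u,\ u+v)\ :\ u\in W_1,\ v\in W_2\,\bigr\}\subset X_1^*\oplus X_2^*,
$$
where $W_1,W_2\subseteq X_2^*$ are two independent operator ranges, $T_1\colon W_1\to X_1^*$ is a bounded operator used to ``cover'' $X_1^*$, and the sum $u+v$ is taken in $X_2^*$.

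The construction is as follows. Using the (weak-star) modesty of $F_2$ in $X_2^*$, fix a separable, (weak-star) dense operator range $W\subset X_2^*$ with $W\cap F_2=\{0\}$, and endow $W$ with a complete norm stronger than the one inherited from $X_2^*$ (available by the characterisation of operator ranges recalled after Definition~\ref{def-operator range+modest}); then $W$ is a separable infinite-dimensional Banach space, so Proposition~\ref{two operator ranges} furnishes two dense operator ranges $W_1,W_2\subseteq W$ with $W_1\cap W_2=\{0\}$. Since the complete norms of $W_1$ and $W_2$ in turn dominate the $X_2^*$-norm, $W_1$ and $W_2$ are operator ranges of $X_2^*$ as well; and $W_2$, being dense in $W$ which is (weak-star) dense in $X_2^*$, is (weak-star) dense in $X_2^*$. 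Next, using the (weak-star) separability of $X_1^*$, fix a bounded operator $T_1\colon W_1\to X_1^*$ with (weak-star) dense range: take a biorthogonal sequence $(x_m,x_m^*)$ in $W_1\times W_1^*$ (these exist in any infinite-dimensional Banach space) and a countable (weak-star) dense set $\{d_m\}\subset X_1^*$, and put $T_1=\sum_m\lambda_m\,x_m^*\otimes d_m$ with the $\lambda_m>0$ small enough for convergence in operator norm; then $T_1x_m=\lambda_md_m$, so $T_1(W_1)$ contains every $d_m$ and is therefore (weak-star) dense. Finally let $T\colon W_1\oplus_1W_2\to X^*$, $T(u,v)=(T_1u,\ u+v)$, which is bounded because the operator-range norms dominate the $X_2^*$-norm, and set $Y:=T(W_1\oplus_1W_2)$.

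Three verifications remain, all routine. \emph{($Y$ is a separable operator range.)} If $T(u,v)=0$ then $u+v=0$ in $X_2^*$, so $u=-v\in W_1\cap W_2=\{0\}$, forcing $u=v=0$; hence $T$ is injective on the separable infinite-dimensional space $W_1\oplus_1W_2$. \emph{($Y$ is (weak-star) dense in $X^*$.)} A basic neighbourhood of a target $(f_0,g_0)\in X^*$ may, since $X=X_1\oplus X_2$, be refined to one constraining the $X_1^*$- and $X_2^*$-coordinates separately; so first pick $u\in W_1$ with $T_1u$ close to $f_0$ (possible as $T_1(W_1)$ is (weak-star) dense in $X_1^*$), then, with $u$ fixed, pick $v\in W_2$ with $v$ close to $g_0-u$ (possible as $W_2$ is (weak-star) dense in $X_2^*$); then $(T_1u,u+v)\in Y$ lies in the given neighbourhood. \emph{($Y\cap(X_1^*\oplus F_2)=\{0\}$.)} This rests on the identity $W_1\cap(F_2+W_2)=\{0\}$: if $w_1=f+w_2$ with $w_1\in W_1$, $f\in F_2$, $w_2\in W_2$, then $f=w_1-w_2\in W_1+W_2\subseteq W$, so $f\in W\cap F_2=\{0\}$, whence $w_1=w_2\in W_1\cap W_2=\{0\}$. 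Now if $(T_1u,u+v)\in X_1^*\oplus F_2$, i.e.\ $u+v\in F_2$, then $u=(u+v)-v\in F_2+W_2$, so $u\in W_1\cap(F_2+W_2)=\{0\}$; consequently $v=(u+v)-u\in F_2$ and $v\in W_2\subseteq W$, so $v\in W\cap F_2=\{0\}$. Thus $Y$ is the desired witness, which proves (a); part (b) is word for word identical with ``weak-star'' replaced by ``norm''.

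The main obstacle, I expect, is conceptual rather than computational: one has to notice that the naive product construction is doomed because $X_1^*$ itself must be avoided, and hence that the $X_1^*$-direction must be ``tilted'' into $X_2^*$ by means of a second operator range $W_1$ independent of the modesty witness $W_2$ of $F_2$. Once the graph form is in place the only real content is the one-line identity $W_1\cap(F_2+W_2)=\{0\}$, in which the two independence facts $W\cap F_2=\{0\}$ and $W_1\cap W_2=\{0\}$ get combined; boundedness, injectivity, and density are bookkeeping.
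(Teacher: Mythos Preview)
Your proof is correct and proceeds via a genuinely different construction from the paper's. The paper works directly over $\ell_1$: it fixes a countable set $\{y_n^*\}\subset S_{X_1^*}$ with (weak-star, resp.\ norm) dense linear span, takes the modesty witness in the form $T\colon\ell_1\to X_2^*$, normalises so that $\|T(e_n)\|\to 0$, partitions $\N=\bigsqcup_n A_n$ into infinite sets, and defines $\widetilde T(e_k)=y_n^*+T(e_k)$ for $k\in A_n$. Density holds because along each $A_n$ one has $\widetilde T(e_k)\to y_n^*$, so the closure contains every $y_n^*$ and hence every $T(e_k)$; trivial intersection is immediate since applying the projection $P_2$ to $\widetilde T x$ returns $Tx$, and $T(\ell_1)\cap F_2=\{0\}$.

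Your route instead splits the single modesty witness $W$ into two disjoint dense operator ranges $W_1,W_2$ via Proposition~\ref{two operator ranges}, using $W_1$ (through an auxiliary operator $T_1$) to cover $X_1^*$ and $W_2$ to cover $X_2^*$. This is more structural: it avoids the partition trick and the renormalisation $\|T(e_n)\|\to 0$, and makes the ``graph'' geometry you describe explicit, at the price of invoking Proposition~\ref{two operator ranges} and building $T_1$. The paper's argument is more elementary and self-contained; yours arguably isolates the mechanism more cleanly. A small simplification of your last verification: once $u+v\in F_2$, note directly that $u+v\in W_1+W_2\subseteq W$, so $u+v\in W\cap F_2=\{0\}$, whence $u=-v\in W_1\cap W_2=\{0\}$; the intermediate identity $W_1\cap(F_2+W_2)=\{0\}$ is not needed.
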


\begin{proof}
Let $P_1$, $P_2$ be the natural projections of $X^*$ onto $X_1^*$ and $X_2^*$,
respectively. For (a), take in $S_{X_1^*}$ a countable subset
$\{y_n^*\}_{n \in \N}$ whose linear span is weak-star dense in $X_1^*$;
for (b), take in $S_{X_1^*}$ a countable subset $\{y_n^*\}_{n \in \N}$
whose linear span is norm dense in $X_1^*$. Let $T\colon  \ell_1
\longrightarrow X_2^*$ be an injective operator whose image is weak-star
dense for the case~(a) and norm dense for the case~(b) in $X_2^*$ and
$F_2 \cap T(\ell_1) = \{0\}$. Without loss of generality we may assume
that $\|T(e_n)\| \longrightarrow 0 $, where $e_n$ are the elements of
the canonical basis of $\ell_1$ (indeed, if not, just compose $T$ with
the operator $T_1 \colon \ell_1 \longrightarrow \ell_1$ that maps
$e_k$ to $e_k/k$ for $k =1,2, \dots$). Also, fix a partition of $\N$, $\N =
\bigsqcup_{n \in \N} A_n$, into a countable family of disjoint
infinite subsets. Now let us define the requested operator $\widetilde
T\colon  \ell_1 \longrightarrow X^*$ as follows:
$$
\widetilde T(x) =  \sum_{n \in \N}  \sum_{k \in A_n} x_k  (y_n^* +
T(e_k)) \qquad \bigl( x=(x_n)_n \bigr),
$$
i.e., $\widetilde T(e_k) = y_n^* + T(e_k)$ for all $k \in A_n$. Then the closure of $\widetilde T(\ell_1)$ contains all the functionals $y_n^*$, and consequently it contains also all $T(e_k)$, so $\overline {\widetilde T(\ell_1)} \supset \Lin\{y_n^*\colon n \in \N\} \oplus T(\ell_1)$ which in the case (a) is weak-star dense in $X^*$ and norm dense in the case (b). Injectivity of $\widetilde T$ follows from injectivity of $T$. It remains to demonstrate that $\widetilde T(\ell_1)$ has trivial intersection with $X_1^* \oplus  F_2$. Indeed, let $x_1^* + f_2 = \sum_{n \in \N}  \sum_{k \in A_n} x_k  (y_n^* + T(e_k))$ for some $x = (x_1, x_2, \ldots) \in \ell_1$ with $x_1^* \in X_1^*$ and $f_2 \in F_2$. Applying $P_2$, we obtain $f_2 =  \sum_{n \in \N}  \sum_{k \in A_n} x_k T(e_k) = Tx$, which means that $x = 0$.
\end{proof}

\section{The main construction}\label{sect:main-construction}

Our goal here is to present a general argument providing Read
norms. We also present some geometric properties of the norms
constructed in this way.
We denote the dual norm to an equivalent norm $p$ by~$p^*$.

\begin{theorem} \label{theor-main-construction}
Let $X$ be a Banach space such that $\Lin(\NA(X))$ is a weak-star
modest subspace of $X^*$. Then $X$ possesses  an equivalent Read
norm $p$. Moreover $p$ can be chosen in such a way that, given
two linearly independent functionals
$x^*, z^* \in \NA(X,p)$ with $p^*(x^*) = p^*(z^*) =1$,  one has $x^* +
z^* \notin  \NA(X,p)$ or   $x^* - z^* \notin  \NA(X,p)$.
\end{theorem}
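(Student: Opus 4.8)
\emph{Setup.} The plan is to build $p$ from the two types of renorming of Section~\ref{sect:preliminaries}, driven by a ``fuzz operator'' extracted from the modesty hypothesis. Fix a separable weak-star dense operator range $Y\subset X^*$ witnessing the weak-star modesty of $\Lin(\NA(X))$, so that $\Lin(\NA(X))\cap Y=\{0\}$, and use Proposition~\ref{prop-modest-dense} to write $Y=T(\ell_1)$ with $T\colon\ell_1\to X^*$ injective, $\|T\|=1$, and $\{Te_n/\|Te_n\|\}$ dense in $S_Y$. Precomposing $T$ with a suitable diagonal operator on $\ell_1$ (this preserves the range, the injectivity, the set of normalised images, and the triviality of $\Lin(\NA(X))\cap Y$), we may assume $\sum_n\|Te_n\|<\infty$. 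Then $Rx=(\langle Te_n,x\rangle)_n$ defines a bounded injective operator $R$ from $X$ into a sequence space $Z$ chosen with care, and the Read norm will be $p(x)=|x|+\|Rx\|_Z$, where $|\cdot|$ is either the original norm of $X$ or, when strict convexity is also wanted, the renorming of Lemma~\ref{lemma-prelim-sum-of-balls}, which leaves $\NA(X)$ unchanged (so I write $\NA(X,|\cdot|)=\NA(X)$ throughout). The crucial requirement on $Z$ is that every norming functional of $B_Z$ at a point $Rx$ be absolutely summable — so that $R^*$ carries it into $Y=T(\ell_1)$, via $R^*\eta=T\eta$ for $\eta\in\ell_1$ — and, moreover, rigid enough to be supported on a single coordinate $e_n$.

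\emph{Structure of $\NA(X,p)$.} By Lemma~\ref{lemma-prelim-sumofnorms}(c), any $x^*\in\NA(X,p)$ with $p^*(x^*)=1$ attaining at some $x\in S_{(X,p)}$ splits as $x^*=\tilde x^*+R^*g$, where $\tilde x^*\in S_{(X,|\cdot|)^*}$ attains its norm \emph{at $x$} — hence $\tilde x^*\in\NA(X)\subset\Lin(\NA(X))$ — and $g\in S_{Z^*}$ norms $Rx$; by the design of $Z$, $R^*g\in Y$ and is supported on a single coordinate. Since the sum $\Lin(\NA(X))+Y$ is direct, this already gives $\NA(X,p)\cap\Lin(\NA(X))=\{0\}$: if $x^*\in\Lin(\NA(X))$, then $R^*g=x^*-\tilde x^*\in\Lin(\NA(X))\cap Y=\{0\}$, so $g=0$, impossible. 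Moreover each $x^*\in\NA(X,p)$ acquires, through this decomposition, a distinguished coordinate index.

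\emph{The extra property.} Let $x^*,z^*\in\NA(X,p)$ be linearly independent with $p^*(x^*)=p^*(z^*)=1$, and assume toward a contradiction that $x^*+z^*$ and $x^*-z^*$ both belong to $\NA(X,p)$ (both are nonzero). Write $x^*=\tilde x^*+R^*g$, $z^*=\tilde z^*+R^*h$, and, after normalising, $x^*+z^*=c_+(\tilde u^*+R^*k)$, $x^*-z^*=c_-(\tilde v^*+R^*l)$ with $c_\pm>0$, all of $\tilde x^*,\tilde z^*,\tilde u^*,\tilde v^*$ in $\NA(X)$ of $|\cdot|$-norm $1$, and $g,h,k,l$ norming functionals of the single-coordinate form above. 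Equating the two expressions for $x^*+z^*$ and splitting along the direct sum $\Lin(\NA(X))\oplus Y$ yields $c_+\tilde u^*=\tilde x^*+\tilde z^*$ and, by injectivity of $T$, $c_+k=g+h$; similarly $c_-\tilde v^*=\tilde x^*-\tilde z^*$ and $c_-l=g-h$. Since $g,h,k,l$ are each supported on a single coordinate, $g+h=c_+k$ and $g-h=c_-l$ force $g$ and $h$ to sit on the same coordinate, hence $g=\pm h$; then one of $g+h$, $g-h$ vanishes, contradicting $c_\pm>0$ together with $k,l\neq0$. Thus not both of $x^*\pm z^*$ are norm-attaining. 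Finally, this feature implies $\NA(X,p)$ contains no two-dimensional subspace: given such a subspace $V$, pick linearly independent $u,v\in V$, rescale to $p^*$-norm $1$, and observe that $u+v,u-v\in V\subset\NA(X,p)\cup\{0\}$ are nonzero, contradicting what was just proved. Hence $p$ is an equivalent Read norm with the stated property.

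\emph{Main obstacle.} Everything hinges on producing an auxiliary space $Z$ (and operator $R$) whose norming functionals at points of $R(X)$ are simultaneously (i) absolutely summable and (ii) supported on a single coordinate, while (iii) $p$ remains an equivalent — and ideally strictly convex — norm. Naively one only controls that such a functional lives on the finite set of coordinates where $|\langle Te_n,x\rangle|$ is maximal, and a tie there wrecks the combinatorial step; forcing this ``peak'' to be a singleton for every $x\neq0$ cannot be done crudely, since the set of $x$ with a tie is a nontrivial subspace that cannot simply be discarded, and there is genuine tension with strict convexity (a single-coordinate norming functional corresponds to a flat piece of $B_Z$). Threading this needle — spreading the construction over infinitely many coordinate blocks and tailoring the norm of $Z$ so that distinct relevant attainment points are separated into distinct blocks, then checking that $p$ is a bona fide equivalent norm — is the technical heart of the proof.
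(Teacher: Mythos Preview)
Your proposal has a real gap, and you have named it yourself: the entire argument rests on producing a target space $Z$ for which every norming functional at a point of $R(X)$ is supported on a single coordinate, and you concede in the final paragraph that you do not know how to do this. The difficulty is not merely technical. If two nonzero vectors $Rx_1,Rx_2$ peak at distinct coordinates, then along the segment joining them some point must tie, so the set of $x$ with a non-unique peak is unavoidable; any ``block spreading'' you might attempt must still confront the same intermediate-value obstruction inside each block. So the combinatorial endgame you describe, while correct \emph{if} the single-coordinate property held, is built on a hypothesis you cannot secure.

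The paper avoids this obstacle by using a much weaker, but sufficient, rigidity. Take $Z=\ell_1$ and $R\colon X\to\ell_1$, $[Rx](n)=r_n\langle Te_n,x\rangle$ with $\sum r_n<\infty$. Then $R^*\colon\ell_\infty\to X^*$ already lands in $T(\ell_1)=Y$ automatically, so your condition~(i) is free; and the norming functionals $g\in B_{\ell_\infty}$ at $Rx$ are not single-coordinate, but they satisfy $g_n=\sign v_n^*(x)$ for every $n$ with $v_n^*(x)\neq0$. The key extra ingredient, which you do not use, is the density of $\{Te_n/\|Te_n\|\}$ in $S_Y$ furnished by Proposition~\ref{prop-modest-dense}. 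Choose the sign $\theta$ so that the attaining points satisfy $x\neq\theta z$; weak-star density of $Y$ gives an open set in $S_Y$ on which functionals take opposite signs at $x$ and $\theta z$, and the density of the normalised $Te_n$'s produces infinitely many indices $n$ with $t_n+\theta\tau_n=0$. If $x^*+\theta z^*$ attained at some $e$, comparing decompositions and using $\Lin(\NA(X))\cap Y=\{0\}$ together with injectivity of $T$ forces $t_n+\theta\tau_n-cs_n=0$ for \emph{all} $n$; but among the indices above one can also arrange $v_n^*(e)\neq0$ (the hyperplane $\{h^*(e)=0\}$ is nowhere dense in $S_Y$), so $s_n=\pm1$ while $t_n+\theta\tau_n=0$, a contradiction. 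Thus the ``sign rigidity'' of $\ell_1$ plus the density in $S_Y$ replaces your unattainable single-coordinate rigidity.
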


\begin{proof}
Let $Y \subset X^*$ be a separable weak-star dense operator range with
$\Lin(\NA(X))\cap Y = \{0\}$ according to Definition~\ref{def-operator range+modest}. By Proposition~\ref{prop-modest-dense}, we may assume that  $Y = T(\ell_1)$, where $T\colon  \ell_1 \longrightarrow X^*$   is  an injective bounded linear operator such that the set $\left\{\frac{Te_n}{\|Te_n\|}\colon n \in \N \right\}$ is dense in $S_Y$. Take a sequence $\{r_n\}$ of positive reals such that $\sum_{k \in \N} r_k <\infty$, denote $v_n^* = T(e_n)$, and consider the operator
$R\colon X\longrightarrow \ell_1$ given by $[R(x)](n)=r_n v_n^*(x)$ for every $n\in \N$ and every $x\in X$. Then, we define an equivalent norm on $X$ by
\begin{equation*}
p(x)=\|x\| + \|Rx\|_{\ell_1} \qquad (x\in X).
\end{equation*}
The adjoint operator  $R^* \colon  \ell_\infty \longrightarrow X^*$ acts as follows: $R^*\bigl(\{t_n\}_{n \in \N}\bigr) = \sum_{n \in \N}  t_n r_n v_n^*$. Consequently, according to Lemma~\ref{lemma-prelim-sumofnorms}(a), we have that
\begin{equation*}
B_{(X,p)^*} = B_{X^*} + R^*(B_{\ell_\infty})=B_{X^*}+\sum_{n \in \N}r_n[-v_n^*, v_n^*].
\end{equation*}

Consider two linearly independent functionals $x^*, z^* \in \NA(X,p)$ with $p^*(x^*) = p^*(z^*) =1$, and let $x, z \in X$ with $p(x)=p(z)=1$ such that $x^*(x)=z^*(z)=1$. Due to Lemma~\ref{lemma-prelim-sumofnorms}(c), there are representations
\begin{equation} \label{eq-x*z^*}
x^* = x_0^* + \sum_{n \in \N}  t_n r_n v_n^*, \quad  z^* = z_0^* + \sum_{n \in \N}  \tau_n r_n v_n^*.
\end{equation}
with $t_k, \tau_k \in [-1,1]$ such that $x_0^*, z_0^*  \in S_{X^*} \cap \NA(X)$, for every $n \in \N$ where
$v_n^*(x) \neq 0$ one has $t_n = \sign  v_n^*(x)$, and  for every $n \in \N$ where $ v_n^*(z) \neq 0$ one has $\tau_n = \sign  v_n^*(z)$.
Let $\theta = \pm 1$ be a sign such  that  $x  \neq \theta z$. First,
remark that, by weak-star density of $Y$,
the set of restrictions of functionals from $Y$ to the
linear span of $x$ and $z$ is the whole $(\Lin\{x,z\})^*$. So, there
is $y_0^* \in S_Y$ such that $y_0^*(x) < 0$ and $y_0^*(\theta z) >
0$. Consequently, there is a neighbourhood $U_0$ of $y_0^*$ in $S_Y$
such that for all $y^* \in U_0$, we have $y^*(x) < 0$ and $y^*(\theta
z) > 0$. Then, for all those $n \in \N$ for which
$\frac{v_n^*}{\|v_n^*\|}\in U_0$, we have that
\begin{equation} \label{eq-tntaun}
t_n  +  \theta  \tau_n =  \sign  v_n^*(x) + \theta \sign  v_n^*(z) = 0.
\end{equation}

We are going to demonstrate that $x^* +   \theta z^* \notin
\NA(X,p)$. Assume to the contrary that there is $e \in X$ with
$p(e)=1$ at which  $x^* +   \theta z^* $ attains its norm, that is
$(x^* +   \theta z^*)(e) =p^*(x^* +   \theta  z^*)$.
Lemma~\ref{lemma-prelim-sumofnorms}(c) says that
one can write
\begin{equation} \label{eq-x*+z^*-no2}
\frac{x^* +   \theta  z^*}{ p^*(x^* +   \theta  z^*)} = h_0^*  +  \sum_{n \in \N}  s_n r_n v_n^* ,
\end{equation}
with $s_k \in [-1,1]$, $h_0^* \in \NA(X)$, and for every $n \in \N$ where $v_n^*(e) \neq 0$, one has $s_n = \sign v_n^*(e)$.

Since $Y$ is weak-star dense, it cannot be contained in a weak-star
closed hyperplane. Consequently, the set $S_Y \cap \{h^* \in X^*\colon
h^*(e) = 0 \} = S_Y \cap \{h^* \in Y\colon h^*(e) = 0 \}$ is nowhere
dense in  $S_Y$. This implies that there is a non-empty relatively
open subset $U_1 \subset U_0$ of $S_Y$ which does not intersect the
hyperplane $ \{h^* \in Y\colon h^*(e) = 0 \}$. Denote $N_1 = \left\{n
  \in \N\colon \frac{v_n^*}{\|v_n^*\|} \in U_1\right\}$, which is
non-empty  by density of $\{\frac{v_n^*}{\|v_n^*\|} \colon n\in\N\}$ in~$S_Y$.
Then, for every $n \in N_1$  the conditions \eqref{eq-tntaun} and the fact that $s_n = \sign v_n^*(e)$ hold true at the same time.

Now, from equations \eqref{eq-x*z^*} and \eqref{eq-x*+z^*-no2} we get
\begin{align*}
0 &= x^* +   \theta  z^* -  p^*(x^* +    \theta z^*) \frac{x^* +   \theta  z^*}{ p^*(x^* +    \theta z^*)} \\
&= (x_0^*+ \theta z_0^* -  p^*(x^* + \theta z^*)h_0^* )
+ \sum_{n \in \N} (t_n + \theta \tau_n  - p^*(x^* +    \theta z^*)s_n)  r_n v_n^*.
\end{align*}
In other words,
\begin{align*}
x_0^*+ \theta z_0^* -  p^*(x^* +  \theta z^*)h_0^*
= - T\left(\sum_{n \in \N} (t_n + \theta \tau_n  - p^*(x^* +    \theta z^*)s_n)  r_n e_n\right).
\end{align*}
The left hand side belongs to $\Lin(\NA(X))$, the right hand side belongs to $Y$, so both of them are equal to zero. Since $T$ is injective, and $\{e_n\}_{n \in \N}$ forms a basis of $\ell_1$, this means that all $t_n + \theta \tau_n  - p^*(x^* +    \theta z^*)s_n$ are equal to zero. On the other hand, as we remarked before, for every $n \in N_1$ we have $t_n  +  \theta  \tau_n = 0$ and $s_n = \sign v_n^*(e)\neq 0$, so $t_n + \theta \tau_n  - p^*(x^* + \theta z^*)s_n \neq 0$. This contradiction completes the proof.
\end{proof}

Observe that $\Lin(\NA(c_0))=\NA(c_0)$ consists on those elements of
$\ell_1$ that have finite support, so it is modest by
Proposition~\ref{modest-in-ell1}. Therefore,
Theorem~\ref{theor-main-construction} applies, giving Read's
\cite{Read} and Rmoutil's \cite{Rmoutil} results.

\begin{corollary}[\mbox{\cite{Read,Rmoutil}}]
There exists an equivalent norm $p$ on $c_0$ such that $\NA(c_0,p)$ does not contain two-dimensional subspaces and, therefore, $(c_0,p)$ does not contain finite-codimensional proximinal subspaces of codimension greater than~$1$.
\end{corollary}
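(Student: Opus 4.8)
The plan is to read the corollary off Theorem~\ref{theor-main-construction} applied to $X=c_0$, together with the implication recalled in the introduction that relates proximinal subspaces to two-dimensional subspaces of norm attaining functionals. First I would pin down $\NA(c_0)$ explicitly. Since $c_0^*=\ell_1$, a functional $f=(a_n)\in\ell_1$ attains its norm on $B_{c_0}$ exactly when there is $x=(x_n)\in S_{c_0}$ with $\sum_n a_n x_n=\sum_n|a_n|$; as $|x_n|\leq 1$ this forces $x_n=\sign(a_n)$ whenever $a_n\neq 0$, and since $x_n\to 0$ this is possible precisely when $f$ has finite support. Conversely every finitely supported functional obviously attains its norm. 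Hence $\NA(c_0)=\Lin\{e_n\}\subset\ell_1$ is already a linear subspace, so $\Lin(\NA(c_0))=\NA(c_0)$.

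Next I would verify that this subspace is weak-star modest in $\ell_1=c_0^*$. By Proposition~\ref{modest-in-ell1} it is modest: the dense operator range $Y=T(A_r(\D))$ produced in Lemma~\ref{Lem-op-im-inell1} has the property that every non-zero element of $Y$ has only finitely many zero coordinates, so no non-zero element of $Y$ is finitely supported, whence $Y\cap\Lin\{e_n\}=\{0\}$; also $A_r(\D)$ is separable. Since $\ell_1$ is a dual space and any norm-dense subset of a dual space is a fortiori weak-star dense, this $Y$ is weak-star dense, and therefore $\Lin\{e_n\}$ is weak-star modest in $c_0^*$.

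With this in place, Theorem~\ref{theor-main-construction} applies to $X=c_0$ and directly yields an equivalent norm $p$ on $c_0$ that is a Read norm, i.e.\ $\NA(c_0,p)$ contains no two-dimensional linear subspace. For the "therefore" clause, I would then invoke the general implication stated in the introduction (see \cite[Proposition~III.4]{Godefroy}): if a Banach space contains a proximinal subspace of finite codimension at least two, then its set of norm attaining functionals contains a two-dimensional linear subspace. Contraposing this against the conclusion of the previous step shows that $(c_0,p)$ possesses no proximinal subspace of finite codimension greater than one.

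I do not expect any real obstacle here; the corollary is essentially a bookkeeping consequence of Theorem~\ref{theor-main-construction}. The only points deserving a moment's care are the explicit computation identifying $\NA(c_0)$ with the finitely supported sequences in $\ell_1$, and the observation that, because $\ell_1$ is a dual space and norm density implies weak-star density, the modesty provided by Proposition~\ref{modest-in-ell1} upgrades to weak-star modesty, which is what Theorem~\ref{theor-main-construction} requires.
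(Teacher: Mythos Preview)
Your proposal is correct and follows essentially the same approach as the paper: identify $\NA(c_0)$ with the finitely supported sequences in $\ell_1$, invoke Proposition~\ref{modest-in-ell1} to get (weak-star) modesty, and apply Theorem~\ref{theor-main-construction}. Your added remarks---that norm density of the operator range in $\ell_1$ automatically gives weak-star density, and that the proximinality clause follows by contraposing \cite[Proposition~III.4]{Godefroy}---make explicit what the paper leaves implicit, but the argument is the same.
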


Although  we extensively use Read's
ideas in our construction, his original  construction is not a particular
case of ours. Namely, Read's
norm on $c_0$ is defined by a very similar formula, but his choice
of corresponding functionals $v_n^*$ is quite different; in Read's
choice they  belong to $\NA(c_0)$ whereas our $v_n^*$ are sort of
``orthogonal'' to this set.

We will provide further examples in the next section.

Next, we would like to present some geometric properties of the Read norms we have constructed here, extending some of the results of \cite{KadetsLopezMartin}. First, we need to expound in detail the norms constructed in Theorem~\ref{theor-main-construction}.

\begin{remark}\label{remark-resume}
Let $X$ be a Banach space. If $\Lin(\NA(X))$ is a weak-star modest subspace of $X^*$, then there is a sequence $\{v_n^*\}_{n\in \N}$ in $B_{X^*}$ for which $\{v_n^*/\|v_n^*\|\}_{n\in \N}$ is weak-star dense in $S_{X^*}$, such that given a sequence $\{r_n\}_{n\in \N}$ of positive reals with $\rho=\sum_{k\in \N} r_k <\infty$ and defining the bounded linear operator $R\colon X\longrightarrow \ell_1$ by
\begin{equation}\label{eq:geom-def-operator-R}
[R(x)](n)=r_n v_n^*(x) \qquad \bigl(n\in \N,\ x\in X\bigr),
\end{equation}
the norm
\begin{equation}\label{eq:geom-def-Read-norm}
p(x)=\|x\|_X + \|R(x)\|_{\ell_1} \qquad (x\in X)
\end{equation}
is a Read norm.
If moreover $\Lin(\NA(X))$ is modest, we get that the sequence $\{v_n^*/\|v_n^*\|\}_{n\in \N}$ can be selected to be norm-dense in $S_{X^*}$.

Let us  mention that it is clear that $\|R\|\leq \rho$ and that $R$ is compact since $\|P_n R - R\| \leq \sum_{k>n} r_k \longrightarrow 0$, where $P_n$   projects $\ell_1$ onto $\Lin\{e_1,\dots,e_n\}$.
\end{remark}

We are now ready to present some geometric properties of our Read norms.

\begin{prop}\label{prop-geom-strictly-convex}
Let $X$ be a Banach space. If $\Lin(\NA(X))$ is a weak-star modest subspace of $X^*$, then the Read norm $p$ defined in \eqref{eq:geom-def-Read-norm} is strictly convex. Moreover, if $\Lin(\NA(X))$ is actually a modest subspace of $X^*$, then  $p$ can be built in such a way that $(X,p)^{**}$ is strictly convex and so $(X,p)^*$ is smooth.
\end{prop}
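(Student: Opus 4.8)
The plan is to deduce the statement from Lemma~\ref{lemma-prelim-sumofnorms}(d), applied first to the operator $R\colon X\to\ell_1$ from \eqref{eq:geom-def-operator-R} and then, for the bidual claim, to its bitranspose $R^{**}\colon X^{**}\to\ell_1$; the whole task then reduces to checking that these operators are injective and that their ranges are strictly convex subspaces of $\ell_1$.

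For the strict convexity of $p$, injectivity of $R$ is immediate: $Rx=0$ forces $v_n^*(x)=0$ for all $n$, and then the weak-star dense set $\{v_n^*/\|v_n^*\|\colon n\in\N\}$ lies in the weak-star closed hyperplane $\{x^*\in X^*\colon x^*(x)=0\}$, so that hyperplane is all of $X^*$ and $x=0$. For strict convexity of $R(X)$, take $u=Rx$ and $v=Ry$ with $\|u\|_1=\|v\|_1=1$ and $\|u+v\|_1=2$; the equality case of the triangle inequality in $\ell_1$ gives $u_nv_n\ge 0$, i.e.\ $v_n^*(x)v_n^*(y)\ge 0$, for every $n$. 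If $x$ and $y$ were linearly independent, the map $w^*\mapsto(w^*(x),w^*(y))$ would be onto $\R^2$, so there would be a norm-one $w^*\in X^*$ with $w^*(x)>0$ and $w^*(y)<0$; the set $\{h^*\in S_{X^*}\colon h^*(x)>0,\ h^*(y)<0\}$ is a non-empty weak-star open subset of $S_{X^*}$, so by weak-star density it contains some $v_n^*/\|v_n^*\|$, contradicting $v_n^*(x)v_n^*(y)\ge 0$. Hence $y=\lambda x$, so $v=\lambda u$, so $|\lambda|=1$ (as $u\ne 0$), and $\lambda=-1$ is impossible since then $\|u+v\|_1=0$; thus $u=v$. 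Now Lemma~\ref{lemma-prelim-sumofnorms}(d) gives that $p$ is strictly convex.

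For the bidual assertion I would first use the modest case of Remark~\ref{remark-resume} to take $\{v_n^*/\|v_n^*\|\colon n\in\N\}$ \emph{norm} dense in $S_{X^*}$, and recall from the same remark that $R$ is compact. By weak compactness of $R$ one has $R^{**}(X^{**})\subset\ell_1$, with $R^{**}x^{**}=\bigl(r_n\,x^{**}(v_n^*)\bigr)_n$, and Lemma~\ref{lemma-prelim-sumofnorms}(b) identifies $p^{**}$ on $X^{**}$ with the sum-of-norms renorming attached to $R^{**}\colon X^{**}\to\ell_1$. Norm density makes $\{v_n^*\}$ norm-total in $X^*$, hence $R^{**}$ is injective; and the argument of the previous paragraph carries over verbatim with $x,y$ replaced by $x^{**},y^{**}\in X^{**}$ --- using that $w^*\mapsto(x^{**}(w^*),y^{**}(w^*))$ is onto $\R^2$ for linearly independent $x^{**},y^{**}$, that $x^{**},y^{**}$ are norm continuous on $X^*$, and that $\{v_n^*/\|v_n^*\|\}$ is norm dense --- to yield that $R^{**}(X^{**})$ is strictly convex. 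Lemma~\ref{lemma-prelim-sumofnorms}(d) then gives that $(X,p)^{**}$ is strictly convex, and $(X,p)^*$ is smooth because a Banach space whose dual is strictly convex has at each point of its sphere a unique supporting functional and hence a G\^{a}teaux differentiable norm.

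The delicate point is the passage from density of $\{v_n^*/\|v_n^*\|\}$ to linear dependence of the pair: one must produce, out of linear independence, a functional with prescribed signs on the pair, and then catch one of the $v_n^*$ near it. In the bidual version weak-star density is not enough, since $x^{**}$ need not be weak-star continuous on $X^*$, so one genuinely needs the $v_n^*/\|v_n^*\|$ to be norm dense --- which is exactly why modesty of $\Lin(\NA(X))$, not just weak-star modesty, is required there.
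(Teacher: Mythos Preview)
Your proof is correct and follows essentially the same route as the paper: reduce both claims to Lemma~\ref{lemma-prelim-sumofnorms}(d) by checking injectivity of $R$ (resp.\ $R^{**}$) and strict convexity of its range via a sign-separation argument exploiting the (weak-star, resp.\ norm) density of $\{v_n^*/\|v_n^*\|\}$. For the bidual step the paper argues more tersely via the containment $R^{**}(X^{**})\subset J_{\ell_1}(R(X))$, whereas you re-run the separation argument directly on $X^{**}$ using norm density and norm continuity of $x^{**},y^{**}$; your version is a bit more explicit but amounts to the same idea.
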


\begin{proof}
For the first part, we only have to show that the operator $R$ given
in \eqref{eq:geom-def-operator-R} is one-to-one and that $R(X)$ is
strictly convex, and then apply
Lemma~\ref{lemma-prelim-sumofnorms}(d). Both assertions are a
consequence of the fact that the sequence $\{v_n^*/\|v_n^*\|\}_{n\in
  \N}$ is weak-star dense in $S_{X^*}$, the first one being immediate. For the strict convexity of $R(X)$, consider $x,y\in X$ such that $R(x)\neq \alpha R(y)$ for every $\alpha>0$. Then, $x\neq \alpha y$ for every $\alpha>0$, so by the Hahn-Banach theorem, there is $x^*\in S_{X^*}$ such that $x^*(x)<0<x^*(y)$ and by weak-star density of $\{v_n^*/\|v_n^*\|\}_{n\in \N}$ in $S_{X^*}$, we get that there is $n\in \N$ such that $v_n^*(x)<0<v_n^*(y)$, so $|v_n^*(x+y)|<|v_n^*(x)|+|v_n^*(y)|$. From here, it is immediate that $\|R(x) + R(y)\|_{\ell_1} < \|R(x)\|_{\ell_1} + \|R(y)\|_{\ell_1}$, showing the strict convexity or $R(X)$.

For the moreover part,  we first use the modesty of $\Lin(\NA(X))$ in
order to get that $\{v_n^*/\|v_n^*\|\}_{n\in \N}$ is  norm-dense in $S_{X^*}$. By  Lemma~\ref{lemma-prelim-sumofnorms}(b), we know that the bidual norm of $p$ is given by
$$
p(x^{**})=\|x^{**}\|_{X^{**}} + \|R^{**}(x^{**})\|_{\ell_1^{**}} \qquad (x^{**}\in X^{**}).
$$
As $R$ is compact, $R^{**}(X^{**})\subset J_{\ell_1}(R(X))$, so to get the strict convexity of the bidual norm we only need to show that $R^{**}$ is one-to-one, but this is consequence of the fact that now the sequence $\{v_n^*/\|v_n^*\|\}_{n\in \N}$ is norm-dense in $S_{X^*}$, as this implies that $R^*(\ell_\infty)$ is norm dense in $X^*$.
\end{proof}

We do not know if for separable Banach spaces, the result above can be improved to get that the Read norm is actually weakly locally uniformly rotund, as it happens for the original Read norm of $c_0$ \cite[Theorem~9]{KadetsLopezMartin}.

We finish the section with the following result which appears in  \cite[Lemma~11]{KadetsLopezMartin}: given a Read norm on a separable Banach space, there is another equivalent Read norm which is smooth. One obtains this fact just applying the renorming sketched in Lemma~\ref{lemma-prelim-sum-of-balls}.

\section{Applicability of the main construction} \label{sect:containing_c_0}
The aim of this section is to demonstrate that
Theorem~\ref{theor-main-construction} is applicable (after making an
appropriate renorming) to all those Banach spaces that contain an
isomorphic copy of $c_0$ and have a countable norming system of
functionals. A \emph{countable norming system of functionals} of a
Banach space $X$ is a bounded subset $\{x_n^*\colon n\in \N\}$ of
$X^*$ for which there is a constant $K\geq 0$ such that
$$
\|x\|\leq K\,\sup_{n\in \N}\bigl|x_n^*(x)\bigr| \qquad (x\in X).
$$
Banach spaces with a countable norming system of functionals are those for which there is a bounded subset of the dual with non-empty interior which is weak-star separable or, equivalently, those which are isomorphic to closed subspaces of $\ell_\infty$, see \cite[p.~254]{DancerSims} for instance.

Our next result shows that the construction of the previous section is
applicable to all Banach spaces which are isomorphic to a closed
subspace of $\ell_\infty$ and contain a copy of $c_0$;
in particular, it is applicable to separable spaces containing a copy
  of~$c_0$.

\begin{theorem} \label{theor-main-application}
Let $X$ be a Banach space containing an isomorphic copy of $c_0$ and
possessing a countable norming system of functionals. Then $X$ is
isomorphic to a space $\widetilde X$ such that $\Lin(\NA(\widetilde
X))$ is weak-star modest in $\widetilde X^*$. Therefore, we can apply
Theorem~\ref{theor-main-construction} to get that the norm given by
\eqref{eq:geom-def-Read-norm} originating from the norm of $\widetilde X$
is a Read norm.
\end{theorem}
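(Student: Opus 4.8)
The plan is to exhibit a decomposition of $X$ (after renorming) that splits off a copy of $c_0$ as a complemented subspace, and then use the modesty of $\Lin(\NA(c_0))=\NA(c_0)$ inside $\ell_1$ together with Proposition~\ref{prop_modest_sum}(a) to propagate weak-star modesty to the whole dual. The starting observation is that by Sobczyk's theorem a copy of $c_0$ inside a \emph{separable} Banach space is automatically complemented; here $X$ need not be separable, but it is isomorphic to a subspace of $\ell_\infty$, i.e.\ it has a countable norming system $\{x_n^*\}$. I would first pass to the equivalent norm $\|x\|' = \sup_n 2^{-n}|x_n^*(x)|$ (or a similar averaging of a norming sequence), which is weak-star lower semicontinuous in a strong sense and makes $(X,\|\cdot\|')$ sit nicely inside $\ell_\infty$; under such a norm a copy of $c_0$ is complemented by a Sobczyk-type argument applied in $\ell_\infty$, so we may write $\widetilde X = c_0 \oplus X_2$ for a suitable closed subspace $X_2$, where $\widetilde X$ denotes $X$ with this new norm.

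Next I would analyse $\NA(\widetilde X)$ in terms of the decomposition. Writing $\widetilde X^* = \ell_1 \oplus X_2^*$ in the canonical way, one has the general inclusion $\Lin(\NA(\widetilde X)) \subset \ell_1 \oplus X_2^*$; what we actually need is that $\Lin(\NA(\widetilde X))$ is contained in some subspace of the form $F_1 \oplus X_2^*$ where $F_1 \subset \ell_1$ is small enough to be modest. Here is where I expect the main obstacle: we have essentially no control over which functionals in the $X_2^*$ direction attain their norm, so we cannot hope to make the $X_2^*$-component small --- but that is fine, because Proposition~\ref{prop_modest_sum}(a) only demands that \emph{one} of the two factors be small (modest) and the other weak-star separable. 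So the roles should be reversed relative to my first instinct: I want the $\ell_1$-factor (the dual of $c_0$) to carry the modesty and the $X_2^*$-factor to be weak-star separable. The point of having a countable norming system is precisely that $X_2$, being a subspace of $\ell_\infty$, has a weak-star separable dual $X_2^*$. Meanwhile $\Lin(\NA(\widetilde X))$ restricted to the $\ell_1$-coordinate should land in (a modest subspace containing) $\NA(c_0)=\Lin\{e_n\}$, which is modest in $\ell_1$ by Proposition~\ref{modest-in-ell1}; if the restriction is not literally inside $\Lin\{e_n\}$, I would instead use Lemma~\ref{Lem-op-im-inell1} to produce a dense operator range in $\ell_1$ avoiding $\Lin(\NA(\widetilde X))\cap \ell_1$, exploiting that a norm-attaining functional in the $c_0$-direction has a genuinely finitely-supported flavour once the norm on $c_0$ is the usual one (which the renorming above should preserve on the $c_0$-summand, or can be arranged to).

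Concretely, then, the key steps in order are: (1) choose the renorming from the countable norming system so that $\widetilde X \hookrightarrow \ell_\infty$ with a $c_0$-summand, obtaining $\widetilde X = X_1 \oplus X_2$ with $X_1 = c_0$ (with its usual norm) and $X_2^*$ weak-star separable; (2) check $\Lin(\NA(\widetilde X)) \subset F_1 \oplus X_2^*$ where $F_1 \subset X_1^* = \ell_1$ is a modest subspace --- taking $F_1 = \Lin\{e_n\}$, or a dense operator range from Lemma~\ref{Lem-op-im-inell1} chosen to meet $\Lin(\NA(\widetilde X)) \cap \ell_1$ only in $0$; (3) apply Proposition~\ref{prop_modest_sum}(a) with the factors $X_1^*$ (weak-star separable, since $\ell_1$ is separable hence weak-star separable) playing the role of ``$X_1^*$'' there, and ``$F_2$'' equal to $X_2^*$ --- wait, more carefully: Proposition~\ref{prop_modest_sum}(a) wants $X_1^*$ weak-star separable and $F_2$ weak-star modest in $X_2^*$, so I would take the ``$X_1^*$'' of that proposition to be our $X_2^*$ (weak-star separable by the norming-system hypothesis) and its ``$F_2$'' to be our modest $F_1 \subset \ell_1 = $ dual of the $c_0$-summand, which is weak-star modest because it is modest; this yields $X_2^* \oplus F_1$ weak-star modest in $\widetilde X^*$, and since $\Lin(\NA(\widetilde X))$ is a subspace of it, $\Lin(\NA(\widetilde X))$ is itself weak-star modest (a subspace of a weak-star modest subspace is weak-star modest, by the remark after Definition~\ref{def-operator range+modest}); (4) invoke Theorem~\ref{theor-main-construction} to conclude. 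The delicate point throughout --- and the step I would spend the most care on --- is (2): verifying that a functional on $c_0 \oplus X_2$ can attain its norm only if its $c_0$-component is finitely supported (or at least lies in the chosen modest $F_1$), which hinges on keeping the usual norm on the $c_0$-summand in the renorming of step~(1), or on replacing $\NA$ by a slightly larger modest subspace if the renorming distorts $c_0$.
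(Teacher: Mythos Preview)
Your step~(1) contains a genuine gap: you assume a ``Sobczyk-type argument applied in $\ell_\infty$'' will complement $c_0$ inside $X$, but Sobczyk's theorem requires the ambient space to be separable, and complementation is an isomorphic invariant that no renorming can create. The hypotheses of the theorem are satisfied by $X=\ell_\infty$ itself (it contains $c_0$ and the coordinate functionals are a countable norming system), yet $c_0$ is famously uncomplemented in $\ell_\infty$; so your decomposition $\widetilde X = c_0 \oplus X_2$ simply does not exist in that case. Your outline is exactly the paper's Proposition~\ref{prop-separablecontainingc0}, which is stated only for separable $X$ and uses Sobczyk in the honest way; the whole point of Theorem~\ref{theor-main-application} is to handle the nonseparable case.

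The paper circumvents complementation as follows. Using Lemma~\ref{lemma-c_0 in_ellinftyl} (which relies on the Lindenstrauss--Rosenthal automorphism theorem, not on Sobczyk), one arranges $c_0 \subset X \subset \ell_\infty$ isometrically. Then $c_0$ is an $M$-ideal in $X$, so $X^* = (c_0)^\bot \oplus_1 \ell_1$; this $L$-decomposition of the \emph{dual} plays the role your complemented sum was supposed to play, and it gives $\NA(X) \subset (c_0)^\bot \oplus_1 \bigl[\ell_1 \cap \NA(X)\bigr]$ because a norm-attaining functional in an $\ell_1$-sum has both components norm-attaining. The second key idea is an ultrafilter trick: after a small modification one gets $X \subset \ker u$ for $u(x)=\lim_{\mathfrak U} x_n$ with $\mathfrak U$ a free ultrafilter, which forces every $f \in \ell_1 \cap \NA(X)$ (and hence every element of $\Lin(\ell_1\cap\NA(X))$) to have its set of zero coordinates in $\mathfrak U$, in particular infinite. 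The dense operator range $Y\subset \ell_1$ from Lemma~\ref{Lem-op-im-inell1}, whose nonzero elements have only finitely many zero coordinates, then meets $\Lin(\NA(X))$ trivially and is weak-star dense in $X^*$. Note that the paper does \emph{not} invoke Proposition~\ref{prop_modest_sum} here, and $(c_0)^\bot$ need not be weak-star separable (for $X=\ell_\infty$ it certainly is not), so that route is closed as well.
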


We need a preliminary technical result.

\begin{lemma} \label{lemma-c_0 in_ellinftyl}
Let $X$ be a Banach space containing an isomorphic copy of $c_0$ and possessing a countable norming system of functionals. Then $X$ is isomorphic to a closed subspace $X_1$ of $\ell_\infty$ containing the canonical copy of $c_0$ inside $\ell_\infty$.
\end{lemma}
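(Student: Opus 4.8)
The plan is to fuse the two hypotheses into a single embedding of $X$ into $\ell_\infty$ that sends a copy of $c_0$ onto the canonical one. The countable norming system $\{x_n^*\}$ gives the isomorphic embedding $x\longmapsto(x_n^*(x))_n$ of $X$ into $\ell_\infty(\N)$, which is exactly the fact recalled just above (see \cite[p.~254]{DancerSims}) that $X$ is already isomorphic to a closed subspace of $\ell_\infty$; what the lemma adds is that a prescribed copy of $c_0$ can be made the canonical one. So I would fix an isomorphic embedding $i\colon c_0\longrightarrow X$, set $W=i(c_0)$, and reduce the lemma to the following: build an isomorphic embedding $\Phi\colon X\longrightarrow\ell_\infty(\N)$ whose restriction to $W$ is a surjective isomorphism onto the canonical copy of $c_0$ inside $\ell_\infty(\N)$. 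Then $X_1:=\Phi(X)$ is a closed subspace of $\ell_\infty$, isomorphic to $X$, which contains $c_0$.

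For $\Phi$ I would start from the $1$-injectivity of $\ell_\infty$: it lets us extend $i^{-1}\colon W\longrightarrow c_0\subset\ell_\infty(\N)$ to a bounded operator $Q\colon X\longrightarrow\ell_\infty(\N)$ with $Q(W)$ equal to the canonical $c_0$; the defect is that $Q$ need not be bounded below. I would repair this by adding to $Q$ a large multiple of an operator that vanishes on $W$ (so as not to disturb $Q|_W$) and that ``sees'' the rest of the norm of $X$. Concretely, choose functionals $f_m\in X^*$ with $f_m\circ i=e_m^*$ and $\sup_m\|f_m\|<\infty$, replace each $x_n^*$ by $\tilde x_n^*:=x_n^*-\sum_m x_n^*(i(e_m))\,f_m$ (the series is absolutely convergent since $(x_n^*(i(e_m)))_m\in\ell_1$), which is bounded and vanishes on $W$, and list the $\tilde x_n^*$ with infinite multiplicity to obtain a bounded operator $\widetilde R\colon X\longrightarrow\ell_\infty(\N)$ with $\widetilde R|_W=0$ and $\dist(\widetilde R x,c_0)=\sup_n|\tilde x_n^*(x)|$ for all $x$. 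Then set $\Phi:=Q+\lambda\widetilde R$ with $\lambda$ large. Since $\widetilde R|_W=0$ one has $\Phi|_W=i^{-1}$ and $\Phi(W)=c_0$, and $\Phi$ is an isomorphic embedding exactly when the induced map $\bar\Phi\colon X/W\longrightarrow\ell_\infty/c_0$ is: a routine perturbation estimate shows that vectors close to $W$ are handled by $Q$ and vectors far from $W$ by $\lambda\widetilde R$, once $\lambda$ is large enough and provided $\sup_n|\tilde x_n^*(\cdot)|=\dist(\widetilde R\,\cdot\,,c_0)$ controls $\dist(\,\cdot\,,W)$ from below.

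The main obstacle, therefore, is this last proviso: that $\{\tilde x_n^*\}$ be a norming system for the quotient $X/W$. When $X$ is separable this is painless, since Sobczyk's theorem makes $W\cong c_0$ complemented in $X$, so $X/W$ is isomorphic to a complemented subspace of $X$ and the restrictions of $\{x_n^*\}$ already norm it; in fact, whenever the copy of $c_0$ is complemented ($X\cong c_0\oplus V$) one can dispense with $Q$ altogether and take directly $\Phi(a,v):=a+T'v$, where $T'\colon V\longrightarrow\ell_\infty(\N)$ is obtained from a norming system of $V$ repeated with infinite multiplicity so that $\dist(T'v,c_0)\gtrsim\|v\|$; then $\Phi(c_0\oplus\{0\})$ is exactly the canonical $c_0$, and a short computation with the distance to $c_0$ shows $\Phi$ is an isomorphic embedding. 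For general $X$ the same blueprint should go through once one checks that the quotient of a space with a countable norming system by a copy of $c_0$ again has one; this is where I expect the argument to require the most care, and where the two hypotheses — the copy of $c_0$ with its coordinate functionals $f_m$, and the countable norming system of $X$ — have to be played off against each other.
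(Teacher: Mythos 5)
Your construction is sound up to its final proviso, and in the complemented case (hence, via Sobczyk, for separable $X$) it is correct and pleasantly explicit. But the proviso on which your general case rests --- that the quotient of a space with a countable norming system by a copy of $c_0$ again has a countable norming system, i.e.\ that $X/W$ embeds into $\ell_\infty$ --- is false, and this is exactly where the lemma has its content. Indeed, $\ell_\infty/c_0$ contains an isometric copy of $c_0(\Gamma)$ with $|\Gamma|=\mathfrak{c}$ (take the classes of the characteristic functions of an almost disjoint family of infinite subsets of $\N$), while $c_0(\Gamma)$ with $\Gamma$ uncountable admits no countable norming system: a bounded countable family in $c_0(\Gamma)^*=\ell_1(\Gamma)$ has all its supports inside a countable set and therefore annihilates some unit vector. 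Since a countable norming system passes to subspaces, $\ell_\infty/c_0$ has none; so the claim already fails for $X=\ell_\infty$ and $W$ the canonical copy (where, of course, the conclusion of the lemma is trivial --- it is the method, not the statement, that breaks). This is not a removable technicality in your scheme: the lower estimate for $\Phi=Q+\lambda\widetilde R$ on vectors far from $W$ comes exclusively from $\dist(\widetilde Rx,c_0)=\sup_n|\tilde x_n^*(x)|$, i.e.\ from a map $X/W\to\ell_\infty\to\ell_\infty/c_0$ factoring through $\ell_\infty$, so its being bounded below forces the countable family $\{\tilde x_n^*\}\subset W^{\perp}$ to norm $X/W$, that is, forces $X/W\hookrightarrow\ell_\infty$ --- a condition strictly stronger than the hypotheses, and unavailable precisely in the non-complemented situations the lemma is designed for (recall that by Phillips' theorem the canonical $c_0$ is uncomplemented in $\ell_\infty$).

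The paper sidesteps this obstruction with a different tool. After embedding $X$ into $\ell_\infty$ via the norming system (your first step, as in \cite{DancerSims}), it applies the Lindenstrauss--Rosenthal theorem \cite[Theorem~2.f.12(i)]{Lin-Tza-I}: every isomorphism between subspaces $Y_1,Y_2\subset\ell_\infty$ such that $\ell_\infty/Y_1$ and $\ell_\infty/Y_2$ are non-reflexive extends to an automorphism of $\ell_\infty$. Taking $Y_1=W$, $Y_2=c_0$ and any isomorphism between them, the image of $X$ under the extending automorphism is the desired $X_1$. The crucial point is that the only hypothesis there is non-reflexivity of the quotients --- no separability, norming or complementation condition on $\ell_\infty/W$ --- which is exactly the information your perturbation argument cannot supply; to push your blueprint through in general you would in effect have to reprove a special case of that extension theorem, of which your complemented branch is the easy instance.
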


\begin{proof}
As $X$ is isomorphic to a closed subspace of $\ell_\infty$, we can
assume that $X$ itself is a closed subspace of $\ell_\infty$. Denote
by $Y_1$ a closed subspace of $X$ that is isomorphic to
$c_0$. According to the Lindenstrauss-Rosenthal theorem
\cite[Theorem~2.f.12(i)]{Lin-Tza-I}, for arbitrary isomorphic closed
subspaces $Y_1, Y_2$ of $\ell_\infty$ such that both
$\ell_\infty/Y_1$,  $\ell_\infty/Y_2$ are non-reflexive, every
bijective isomorphism $T \colon Y_1 \longrightarrow Y_2$  extends to
an automorphism $\widetilde T\colon  \ell_\infty \longrightarrow
\ell_\infty$. If we apply this result to our $Y_1$, to $Y_2 = c_0$,
and to an arbitrary bijective isomorphism $T\colon  Y_1 \longrightarrow
c_0$ (which is possible by \cite[Proposition~2.f.13]{Lin-Tza-I}),
the resulting $X_1 = \widetilde T(X)$ will be the subspace we
are looking for.
\end{proof}

\begin{proof}[Proof of Theorem~\ref{theor-main-application}]
By Lemma~\ref{lemma-c_0 in_ellinftyl}, we may assume without loss of generality  that  $c_0 \subset  X \subset \ell_\infty$. Consider a non-trivial ultrafilter $\U$ on $\N$ and denote by $u$  the linear functional on $\ell_\infty$ that assigns to each $x = (x_n)_{n \in \N} \in \ell_\infty$ the $\U$-limit of its coordinates:
$$
u(x) = \lim_\U x_n.
$$
There are two cases: (1) for some non-trivial ultrafilter $\U$ our
space  $X$ lies in the corresponding $\ker u$,  and (2) $X
\not\subset  \ker u$ for any $\U$. Let us demonstrate that the second
case can be reduced to the first one. Indeed, in the second case denote by
$R_1\colon  \ell_\infty \longrightarrow \ell_\infty$ the right shift
operator: $R_1((x_1, x_2, \ldots)) = (0, x_1, x_2, \ldots)$. Then
always $R_1(X)  \not\subset \ker u$  (otherwise $X$ lies in the kernel
of the limit with respect to the shifted ultrafilter). Consequently,
$R_1(X)  \cap\ker u$  is a one-codimensional subspace of $R_1(X)\cong
X$, so    $\widetilde X:= \R e_1 \oplus (R_1(X)  \cap\ker u)$ is
isomorphic to $X$. Since  $c_0 \subset  \widetilde X \subset \ker u$,
the reduction to the first case is completed.

So the picture that we are considering is  $c_0 \subset X \subset \ker u$. Since $c_0$ forms an $M$-ideal of  $\ell_\infty$, $c_0$ is also an $M$-ideal of $X$  \cite[Proposition~I.1.17]{HWW}, that is, $X^* = (c_0)^\bot \oplus_1 \ell_1$. Then
$$
\NA(X) \subset \bigl[(c_0)^\bot \cap \NA(X)\bigr]  \oplus_1 \bigl[\ell_1 \cap \NA(X)\bigr] \subset (c_0)^\bot \oplus_1 \bigl[\ell_1 \cap \NA(X)\bigr],
$$
where in the first inclusion we use the elementary fact that if $f +
g$ with $\|f+g\| = \|f\| + \|g\|$ attains its norm, then both $f$ and
$g$ attain their norms.
If a non-zero element $f = (f_1, f_2, \ldots) \in \ell_1$  attains its
norm at some $x = (x_1, x_2, \ldots) \in S_X$, then for all $n$ where
$f_n \neq 0$  we have $|x_n| = 1$.  Since  $\lim_\U x_n = 0$, this
means that for every element $f \in \ell_1 \cap \NA(X)$ the set $\{n
\in \N\colon f_n =0 \}$ belongs to $\U$.  Any linear
combination of elements of $\ell_1 \cap \NA(X)$ will have the same
property.  Let $Y \subset \ell_1$ be the dense operator range from
Lemma~\ref{Lem-op-im-inell1}. Since $\ell_1$ is weak-star dense in
$X^*$, this $Y$ is also weak-star dense in $X^*$.  Every non-zero
element of $Y$ has a finite number of zero coordinates, but for $f\in Y \cap  \Lin(\NA(X))$, the number of zero coordinates is infinite by the above discussion.
Consequently $Y \cap  \Lin(\NA(X)) \subset Y \cap \Lin (\ell_1 \cap \NA(X)) = \{0\}$. This demonstrates that $\Lin(\NA(X))$ is weak-star modest in $X^*$.
\end{proof}

If $X$ is actually separable, things may be done in an easier fashion;
and  in the case when $X^*$ is separable we get a stronger result. We state the result here.

\begin{prop}\label{prop-separablecontainingc0}
Let $X$ be a separable Banach space containing $c_0$. Then, there is an equivalent norm $q$ on $X$ such that, in this norm, $(X, q)=c_0 \oplus_\infty Z$ for some $Z$ and  $\Lin(\NA((X, q))) \subset \NA(c_0) \oplus Z^*$ is weak-star modest. If moreover $X^*$ is separable, then $\Lin(\NA((X, q)))$ is actually modest.
Therefore, we can apply Theorem~\ref{theor-main-construction} to get that the norm given by \eqref{eq:geom-def-Read-norm} is a Read norm.
\end{prop}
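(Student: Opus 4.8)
The plan is to use the classical fact that a separable Banach space containing $c_0$ admits a renorming making $c_0$ an $M$-summand, and then to invoke the structural results already established in Section~\ref{sect:preliminaries}. First I would recall that if $X$ is separable and contains an isomorphic copy of $c_0$, then by Sobczyk's theorem any copy of $c_0$ sitting inside $X$ is complemented, and moreover one can choose an equivalent norm $q$ on $X$ for which $X = c_0 \oplus_\infty Z$ isometrically, where $Z$ is a closed complement (this is a standard consequence of Sobczyk's theorem together with an averaging/renorming argument; one simply transports the $\ell_\infty$-sum structure back along the isomorphism). Then $(X,q)^* = \ell_1 \oplus_1 Z^*$ in its canonical decomposition, where we identify $c_0^* = \ell_1$, and $\NA(c_0) = \Lin\{e_n\} \subset \ell_1$ consists of the finitely supported sequences.

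Next I would analyse $\NA((X,q))$. Using the elementary fact already invoked in the proof of Theorem~\ref{theor-main-application} — that a functional $f+g$ on an $\ell_1$-sum with $\|f+g\|=\|f\|+\|g\|$ attains its norm only if both $f$ and $g$ do — one gets
$$
\NA((X,q)) \subset \bigl[\ell_1 \cap \NA(c_0)\bigr] \oplus_1 \bigl[Z^* \cap \NA(Z)\bigr] \subset \NA(c_0) \oplus Z^* .
$$
Hence $\Lin(\NA((X,q))) \subset \NA(c_0) \oplus Z^*$, and since modesty (and weak-star modesty) passes to smaller subspaces, it suffices to show that $\NA(c_0) \oplus Z^*$ is weak-star modest in $(X,q)^*$. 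Here I would apply Proposition~\ref{prop_modest_sum}(a) with $X_1 = c_0$, $X_2 = Z$: the dual $X_1^* = \ell_1$ is weak-star separable (indeed norm separable), and $\NA(c_0) = \Lin\{e_n\}$ is modest in $\ell_1$ by Proposition~\ref{modest-in-ell1}, hence in particular weak-star modest. We also need the ``$F_2$'' part: take $F_2 = Z^*$ itself. But $Z^*$ is not in general contained in a proper weak-star dense operator range of $Z^*$ — so instead I would apply Proposition~\ref{prop_modest_sum}(a) with $F_2 = \{0\}$ (trivially weak-star modest, witnessed by any weak-star dense operator range in $Z^*$, which exists since $Z$ is separable hence $Z^*$ is weak-star separable and one can use Lemma~\ref{Lem-op-im-inell1} transported via a weak-star dense injective image of $\ell_1$ in $Z^*$). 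This gives that $X_1^* \oplus F_2 = \ell_1 \oplus \{0\}$ is weak-star modest; but that is too small. The correct move is to observe that $Z$ is separable, so $Z^*$ is weak-star separable, hence $Z^*$ itself has countable weak-star dense linear span — and then combine: take $\{y_n^*\}$ weak-star dense in $S_{\ell_1}$ and adjoin to it a countable weak-star dense subset of $Z^*$, feeding everything through the construction in the proof of Proposition~\ref{prop_modest_sum} so that the resulting operator range has trivial intersection with $\ell_1 \oplus Z^* \setminus \{$the finitely supported part$\}$. Concretely: one repeats the proof of Proposition~\ref{prop_modest_sum}(a), but uses the $Y$ from Lemma~\ref{Lem-op-im-inell1} inside $\ell_1 = c_0^*$ to kill the $\NA(c_0)$ part, while the $Z^*$ part is absorbed into the ``$\{y_n^*\}$'' family. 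This is essentially the same argument as in Theorem~\ref{theor-main-application}, just with the cleaner $\ell_\infty$-sum splitting available in the separable case.

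For the ``moreover'' part, when $X^*$ is separable, both $\ell_1$ and $Z^*$ are norm separable, so $\NA(c_0)$ is modest in $\ell_1$ by Proposition~\ref{modest-in-ell1}, and the same construction as above — now using norm-dense rather than weak-star-dense countable families and invoking Proposition~\ref{prop_modest_sum}(b) in place of (a) — yields that $\NA(c_0) \oplus Z^*$, and hence $\Lin(\NA((X,q)))$, is modest in $(X,q)^*$. The last sentence of the statement is then immediate: Theorem~\ref{theor-main-construction} applies to $\widetilde X := (X,q)$, and Remark~\ref{remark-resume} says the norm \eqref{eq:geom-def-Read-norm} built from it is a Read norm. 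The main obstacle I anticipate is purely bookkeeping: verifying that after adjoining the $Z^*$-part to the operator-range construction of Proposition~\ref{prop_modest_sum}, the intersection with $\NA(c_0) \oplus Z^*$ is still trivial — this requires checking that the $\ell_1$-component of any element of the operator range lands in the Lemma~\ref{Lem-op-im-inell1} range $Y$, which has only finitely many zero coordinates and so meets $\Lin\{e_n\}$ only in $0$; the $Z^*$-part contributes nothing to this component and hence causes no collision. Once this is set up carefully the proof is a direct assembly of Proposition~\ref{modest-in-ell1}, Lemma~\ref{Lem-op-im-inell1}, Proposition~\ref{prop_modest_sum}, and Theorem~\ref{theor-main-construction}.
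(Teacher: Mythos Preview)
Your overall strategy matches the paper's: Sobczyk's theorem gives the renorming $(X,q)=c_0\oplus_\infty Z$, the $\ell_1$-sum structure of the dual forces $\Lin(\NA((X,q)))\subset \NA(c_0)\oplus Z^*$, and then Proposition~\ref{prop_modest_sum} finishes. The paper's proof is literally one line: ``This is just a consequence of Sobczyk's theorem and Proposition~\ref{prop_modest_sum}.''

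The detour in your write-up comes from applying Proposition~\ref{prop_modest_sum} with the roles reversed. You set $X_1=c_0$, $X_2=Z$, and then had to take $F_2=Z^*$, which of course is not modest in itself; this forced you into the ad hoc reconstruction in the middle paragraphs. The clean move is to swap: take $X_1=Z$ and $X_2=c_0$. Then $X_1^*=Z^*$ is weak-star separable (since $Z$ is separable), and $F_2=\NA(c_0)=\Lin\{e_n\}\subset \ell_1=c_0^*$ is modest by Proposition~\ref{modest-in-ell1}, hence weak-star modest. Proposition~\ref{prop_modest_sum}(a) then gives directly that $Z^*\oplus \NA(c_0)$ is weak-star modest in $(X,q)^*$. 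For the ``moreover'' part, when $X^*$ is separable so is $Z^*$, and part~(b) applies verbatim.

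The ``concrete'' construction you sketch at the end --- putting the operator range from Lemma~\ref{Lem-op-im-inell1} into the $\ell_1$ factor and absorbing $Z^*$ into the $\{y_n^*\}$ family --- is precisely the proof of Proposition~\ref{prop_modest_sum} run with $X_1=Z$, $X_2=c_0$. So your argument is not wrong, just circuitous: you rediscovered the right orientation instead of invoking it.
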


This is just a consequence of Sobczyk's theorem (see \cite[2.5.8]{Albiac-Kalton}) and Proposition~\ref{prop_modest_sum}.

Our next aim is to give geometric properties of the Read norms that we have constructed in this section, which extends those results given in \cite{KadetsLopezMartin} for the original Read space.

The first result contains all the geometric properties  of the Read
norms in Theorem~\ref{theor-main-application} and
Proposition~\ref{prop-separablecontainingc0} we know about.

\begin{prop}\label{prop-geom-main-c_0}
Let $X$ be a Banach space containing $c_0$ and having a countable norming system of functionals. Then, for every $0<\eps<2$, there is an equivalent Read norm $p_\eps$ on $X$ satisfying the following:
\begin{enumerate}
    \item[(a)] $(X,p_\eps)$ is strictly convex;
    \item[(b)] every convex combination of slices of the unit ball of
    $(X,p_\eps)$ has diameter $\geq 2-\eps$, so
    every relatively weakly open subset of the unit ball of $(X,p_\eps)$
    has diameter $\geq 2-\eps$;
    \item[(c)] the norm of $(X,p_\eps)^*$ is $(2-\eps)$-rough.
\end{enumerate}
Moreover, if $X^*$ is separable, then
\begin{enumerate}
    \item[(d)] $(X,p_\eps)^{**}$ is strictly convex, so $(X,p_\eps)^*$ is smooth.
\end{enumerate}
\end{prop}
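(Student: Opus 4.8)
The plan is to combine the constructions from the previous sections in a single renorming step, keeping careful track of which properties each ingredient contributes. Starting from $X$ with a countable norming system of functionals and containing $c_0$, I would first apply Theorem~\ref{theor-main-application} (or, when $X^*$ is separable, Proposition~\ref{prop-separablecontainingc0}) to replace $X$ by an isomorphic copy $\widetilde X$ for which $\Lin(\NA(\widetilde X))$ is weak-star modest in $\widetilde X^*$, and modest when $X^*$ is separable. Now Remark~\ref{remark-resume} gives me, for \emph{any} summable sequence $\{r_n\}$ of positive reals, a Read norm $p$ of the form \eqref{eq:geom-def-Read-norm} built from an operator $R\colon \widetilde X\to\ell_1$ with $\|R\|\le\rho=\sum_k r_k$; moreover $R$ is compact. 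The point is that $\rho$ can be made as small as I like by scaling the $r_n$, and this is exactly the free parameter I will tune to get property~(b).

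For (a), strict convexity is immediate from Proposition~\ref{prop-geom-strictly-convex}: the weak-star density of $\{v_n^*/\|v_n^*\|\}$ in $S_{\widetilde X^*}$ forces $R$ to be injective with $R(\widetilde X)$ strictly convex, hence $p$ is strictly convex by Lemma~\ref{lemma-prelim-sumofnorms}(d). For (d), when $X^*$ is separable I use the "moreover" part of Proposition~\ref{prop-geom-strictly-convex}: since $\Lin(\NA(\widetilde X))$ is modest I may take $\{v_n^*/\|v_n^*\|\}$ norm-dense in $S_{\widetilde X^*}$, so $R^*(\ell_\infty)$ is norm-dense in $\widetilde X^*$, $R^{**}$ is injective, and compactness of $R$ gives $R^{**}(\widetilde X^{**})\subset J_{\ell_1}(R(\widetilde X))$ which is strictly convex; by Lemma~\ref{lemma-prelim-sumofnorms}(b) the bidual norm $p^{**}$ is then strictly convex, and strict convexity of $(X,p_\eps)^{**}$ is equivalent to smoothness of $(X,p_\eps)^*$ by a standard duality argument. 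For (c), roughness should come from Lemma~\ref{lemma-prelim-sumofnorms}(e) together with the observation that the original space $\widetilde X$ (isomorphic to a subspace of $\ell_\infty$ containing $c_0$) can be taken so that its norm is $2$-rough in the $c_0$-direction — but here I should be more careful and instead derive (c) directly from (b), since $(2-\eps)$-roughness of the dual norm is essentially the dual reformulation of every slice of $B_{(X,p_\eps)}$ (equivalently every weak-star slice of $B_{(X,p_\eps)^*}$... ) — the cleanest route is: diameter-$2$-type properties of the ball are known to be equivalent/to imply roughness of the dual norm via \cite[Fact]{D-G-Z}, so (c) follows from (b). Finally, the implication in (b) from "convex combinations of slices have diameter $\ge 2-\eps$" to "relatively weakly open subsets have diameter $\ge 2-\eps$" is the standard Bourgain lemma that every nonempty relatively weakly open subset of a bounded convex set contains a convex combination of slices.

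The main obstacle, and the heart of the argument, is property~(b): showing that by choosing $\rho=\sum r_k$ small enough, every convex combination of slices of $B_{(X,p_\eps)}$ has diameter at least $2-\eps$. The mechanism is that $B_{(X,p_\eps)^*}=B_{\widetilde X^*}+\sum_n r_n[-v_n^*,v_n^*]$ and, because $\{v_n^*/\|v_n^*\|\}$ is weak-star dense in $S_{\widetilde X^*}$, the ball $B_{(X,p_\eps)^*}$ is, up to a $\rho$-perturbation, "as big as possible" in every direction; dually, a slice $S(x^*,\alpha)$ of $B_{(X,p_\eps)}$ determined by a functional $x^*$ contains, for the relevant index $n$ with $v_n^*/\|v_n^*\|$ close to $x^*/\|x^*\|$, points far apart: one exploits that moving along a vector $w$ with $v_m^*(w)=0$ for the finitely many $m$ that "see" the slice changes $p$ only by $\|w\|_X$, and the original norm on $\widetilde X\supset c_0$ admits, near any point, almost-diametral pairs inside the $c_0$-part. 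Concretely I would fix a convex combination $\sum_{i=1}^N\lambda_i S(x_i^*,\alpha_i)$, approximate each $x_i^*$ in the representation coming from Lemma~\ref{lemma-prelim-sumofnorms}(c) by a finite sum $x_{0,i}^*+\sum_{n\in F}t_nr_nv_n^*$, and then produce, inside the corresponding slices, points of the form $x_i\pm u$ where $u\in c_0$ has large norm, is "invisible" to the finitely many functionals in play, and barely changes $p$; the two sign choices give two points in the convex combination at distance $\approx 2\|u\|$, and a bookkeeping estimate shows one can make $\|u\|\ge 1-\eps'$ while all errors incurred are controlled by $\rho$, so the final diameter is $\ge 2-\eps$ once $\rho$ is small. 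Assembling these perturbation estimates cleanly — in particular uniformly over all convex combinations of slices — is the delicate part; everything else reduces to citing the lemmas above.
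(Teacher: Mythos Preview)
Your overall strategy matches the paper's: renorm via Theorem~\ref{theor-main-application} (or Proposition~\ref{prop-separablecontainingc0} in the separable-dual case) so that $\Lin(\NA(\widetilde X))$ is weak-star modest (resp.\ modest), then take $p_\eps$ as in Remark~\ref{remark-resume} with $\|R\|$ small. Parts (a) and (d) via Proposition~\ref{prop-geom-strictly-convex}, (c) from (b) via \cite[Proposition~I.1.11]{D-G-Z}, and the Bourgain-lemma step in (b) are all exactly what the paper does.

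The gap is in your mechanism for (b). Your claim that ``moving along a vector $w$ with $v_m^*(w)=0$ for the finitely many $m$ that `see' the slice changes $p$ only by $\|w\|_X$'' is not right: there are infinitely many $v_n^*$, and killing finitely many of them does nothing to control $\|R(x+w)\|_{\ell_1}=\sum_n r_n|v_n^*(x+w)|$. Likewise, approximating each $x_i^*$ by a finite sum $x_{0,i}^*+\sum_{n\in F}t_nr_nv_n^*$ helps with staying in the slice, but not with keeping $p(x_i\pm u)$ close to~$1$. The paper's argument replaces this finite-truncation idea with two clean observations, packaged as Lemmas~\ref{lemma-geom-elementary} and~\ref{Lemma-geom-transfer-2-eps}: first, for $c_0\subset\widetilde X\subset\ell_\infty$ in the sup norm, every $x\in B_{\widetilde X}$ is the weak limit of two sequences $y_n=x+(1-x(n))e_n$ and $z_n=x-(1+x(n))e_n$ lying \emph{exactly} on $S_{\widetilde X}$ with $e_n^*(y_n-z_n)=2$; second, because $R$ is \emph{compact} and $y_n\to x$ weakly, one has $\|R y_n\|\to\|Rx\|$, hence $p_\eps(y_n)\to p_\eps(x)$, and after normalising one gets points in the unit ball weakly converging to $x$ at mutual distance $\geq 2(1+\|R\|)^{-1}$. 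Weak convergence then lands these points inside any prescribed slice. So the two ingredients you need are (i) the explicit $c_0$-perturbation that keeps the \emph{original} norm exactly constant, and (ii) compactness of $R$ (not finite truncation of the $v_n^*$) to control the $\ell_1$-part.
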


We need a couple of preliminary results for the proof  which have
their own interest. The first is surely known, but we include an
elementary proof for the sake of completeness.

\begin{lemma}\label{lemma-geom-elementary}\mbox{}
\begin{enumerate}
\item[(a)] Let $X$ be a closed subspace of $\ell_\infty$ which contains the canonical copy of $c_0$. Then, given $x\in B_X$ there are two sequences $\{y_n\}$, $\{z_n\}$ in $S_X$ that both converge weakly to $x$ and such that $e_n^*(y_n-z_n)=2$ for every $n\in \N$, where $e_n^*$ denotes
 the  $n$-th coordinate functional on $X$.
\item[(b)] Let $X$ be a Banach space such that $X=c_0\oplus_\infty Z$
  for some closed subspace $Z$. Then there is a sequence $\{f_n\}$ in
  $S_{X^*}$ such that given $x\in B_X$ there are two sequences
  $\{y_n\}$, $\{z_n\}$ in $S_X$ which converge weakly to $x$ and such
  that $f_n(y_n-z_n)=2$ for every $n\in \N$.
\end{enumerate}
\end{lemma}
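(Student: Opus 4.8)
\medskip

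The plan is to prove (a) by a direct truncation-and-shift argument in $\ell_\infty$, and then deduce (b) from (a) via a mild change of representation. For part (a), fix $x \in B_X$ and let me treat the two cases $\|x\| < 1$ and $\|x\|=1$ uniformly by the following device. For each $n$, I want to produce $y_n, z_n \in S_X$ that differ from $x$ only in the $n$-th coordinate (so that they automatically converge weakly to $x$, since $e_n^* \to 0$ weak-star along the canonical copy of $c_0 \subset X \subset \ell_\infty$ and the finitely supported perturbations go to zero weakly — more carefully, $y_n - x = \lambda_n e_n$ and $z_n - x = \mu_n e_n$ with $|\lambda_n|, |\mu_n| \le 2$, and $e_n \to 0$ weakly in $c_0 \subset X$). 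Concretely, I set $y_n = x - x_n e_n + e_n = x + (1-x_n)e_n$ and $z_n = x - x_n e_n - e_n = x - (1+x_n)e_n$, where $x_n = e_n^*(x)$; these lie in $X$ because $c_0 \subset X$ and $e_n \in c_0$. Since $|x_n| \le 1$, changing the $n$-th coordinate of $x$ to $\pm 1$ does not increase the sup-norm, so $\|y_n\|_\infty = \|z_n\|_\infty \le 1$; but the $n$-th coordinate equals $\pm1$, so in fact $\|y_n\| = \|z_n\| = 1$, i.e.\ $y_n, z_n \in S_X$. Finally $e_n^*(y_n - z_n) = 1 - (-1) = 2$, and both sequences converge weakly to $x$ because $y_n - x = (1-x_n)e_n \to 0$ and $z_n - x = -(1+x_n)e_n \to 0$ weakly in $X$ (the coefficients are bounded and $e_n \rightharpoonup 0$). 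This settles (a).

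\medskip

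For part (b), write $X = c_0 \oplus_\infty Z$ and let $\pi\colon X \to c_0$ be the $\infty$-direct-sum projection; denote by $e_n^*$ the composition of $\pi$ with the $n$-th coordinate functional on $c_0$, so $\{e_n^*\}$ is a bounded sequence in $X^*$ with $e_n^* \rightharpoonup^* 0$ and $\|e_n^*\| = 1$. Put $f_n = e_n^*$. Given $x = (c, z) \in B_X$ with $c = (c_k) \in B_{c_0}$ and $z \in B_Z$, apply the construction of part (a) inside the $c_0$ coordinate: set $y_n = (c + (1-c_n)\mathbf e_n,\, z)$ and $z_n = (c - (1+c_n)\mathbf e_n,\, z)$, where $\mathbf e_n$ is the $n$-th canonical vector of $c_0$. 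As before $\|c \pm (\cdots)\mathbf e_n\|_{c_0} = 1$ while $\|z\|_Z \le 1$, so $\|y_n\| = \|z_n\| = \max\{1, \|z\|_Z\} = 1$, i.e.\ $y_n, z_n \in S_X$; they converge weakly to $x$ since their difference from $x$ is a bounded multiple of $(\mathbf e_n, 0)$, which tends to $0$ weakly in $X$; and $f_n(y_n - z_n) = (1-c_n) - (-(1+c_n)) = 2$. This completes (b).

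\medskip

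I do not anticipate a serious obstacle here; the only point requiring a little care is the weak convergence $y_n, z_n \rightharpoonup x$. This is where we genuinely use that the perturbations live in the canonical copy of $c_0$: for a bounded sequence $(\lambda_n)$, the vectors $\lambda_n \mathbf e_n$ converge weakly to $0$ in $c_0$ (test against $\ell_1 = c_0^*$), hence a fortiori weakly to $0$ in $X = c_0 \oplus_\infty Z$ (test against $X^* = \ell_1 \oplus_1 Z^*$, noting the $Z^*$-component annihilates $(\mathbf e_n,0)$). The case $\|x\| = 1$ needs no separate treatment in either part, since the construction forces the modified coordinate to have modulus exactly $1$, which keeps the norm equal to $1$ regardless of the original size of $x$. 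Thus both statements follow by the same explicit formula.
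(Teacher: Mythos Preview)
Your proof is correct and essentially identical to the paper's: you use exactly the same explicit formulas $y_n = x + (1-x_n)e_n$, $z_n = x - (1+x_n)e_n$ for part~(a) and their obvious analogues for part~(b), with $f_n = (e_n^*,0)$. The only difference is that you supply more detail on the weak convergence; for part~(a) in the general case $c_0 \subset X \subset \ell_\infty$ the cleanest justification is that any $f \in X^*$ restricts to an element of $c_0^* = \ell_1$ on $c_0$, so $f(e_n)\to 0$.
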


\begin{proof}
For the first part, just define $y_n= x + (1-x(n))e_n$ and $z_n = x - (1 + x(n))e_n$ for every $n\in \N$, where $e_n$ is the $n$-th element of the canonical basis of $c_0$.
Then, $\{y_n\}$, $\{z_n\}$ are contained in $S_X$, both converge weakly to $x$, and $e_n^*(y_n-z_n)=2$ for every $n\in \N$.

The second part is equally easy: consider $f_n=(e_n^*,0)\in X^*$ for every $n\in \N$. Given $x=(u,z)$ with $u\in B_{c_0}$ and $z\in B_Z$, the sequences $$\{(u + (1-u(n))e_n,z)\}\qquad \text{and} \qquad \{(u - (1+u(n))e_n,z)\}$$ fulfill all of our requirements.
\end{proof}

The next preliminary result allows to transfer properties of a given norm to the norm constructed by \eqref{eq:geom-def-Read-norm}.

\begin{lemma}\label{Lemma-geom-transfer-2-eps}
Let $X$ be a Banach space and suppose that there is a sequence
$\{f_n\}$ in $S_{X^*}$ such that for every $x\in B_X$, there are two
sequences $\{y_n\}$, $\{z_n\}$ in $S_X$ which  converge weakly to $x$
and such that $f_n(y_n-z_n)=2$ for every $n\in \N$. Let
$R\colon X\longrightarrow Y$ be a compact operator from $X$ to some  Banach
space $Y$ and define an equivalent norm on $X$ by
$$
\Norm x\Norm = \|x\|_X + \|R(x)\|_Y \qquad (x\in X).
$$
Then, there is a sequence $\{g_n\}$ in the unit sphere of $(X,\Norm
\cdot\Norm )^*$ such that given $x\in X$ with $\Norm x\Norm =1$, there
exist two sequences $\{\tilde y_n\}$, $\{\tilde z_n\}$ in the unit
ball of $(X,\Norm \cdot\Norm )$  that both converge weakly to $x$ and
such that $\lim_n g_n(\tilde y_n - \tilde z_n)\geq 2(1+\|R\|)^{-1}$.
\end{lemma}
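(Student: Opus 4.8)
The plan is to transplant, for each $x$, the sequences $\{y_n\}$, $\{z_n\}$ furnished by the hypothesis — but applied to $x/\|x\|_X$ rather than to $x$ — after rescaling them by the scalar $\|x\|_X$, and to take the $g_n$ to be the normalizations of the $f_n$ in the dual norm.

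I would begin by recording the elementary comparison $\|x\|_X\le\Norm x\Norm\le(1+\|R\|)\|x\|_X$ for all $x\in X$. This gives the ball inclusions $(1+\|R\|)^{-1}B_X\subseteq B_{(X,\Norm\cdot\Norm)}\subseteq B_X$, hence $(1+\|R\|)^{-1}\|f\|_{X^*}\le\Norm f\Norm^*\le\|f\|_{X^*}$ for every $f\in X^*$, where $\Norm\cdot\Norm^*$ denotes the dual norm; in particular $\Norm f_n\Norm^*\in[(1+\|R\|)^{-1},1]$, and $\Norm x\Norm=1$ forces $\|x\|_X\ge(1+\|R\|)^{-1}$. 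Next I would pass to a subsequence of $\{f_n\}$ along which $\Norm f_n\Norm^*\to c$ for some $c\in[(1+\|R\|)^{-1},1]$; this is harmless, since the hypothesis persists for a subsequence of $\{f_n\}$ (take the corresponding subsequences of $\{y_n\},\{z_n\}$). Then I set $g_n:=f_n/\Norm f_n\Norm^*$, so that $\{g_n\}\subset S_{(X,\Norm\cdot\Norm)^*}$, and this is the sequence I claim works.

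Now fix $x$ with $\Norm x\Norm=1$ and write $\lambda:=\|x\|_X\in[(1+\|R\|)^{-1},1]$, so that $\|Rx\|_Y=1-\lambda$. I would apply the hypothesis to $x/\lambda\in S_X$, obtaining $\{y_n\},\{z_n\}$ in $S_X$ converging weakly to $x/\lambda$ with $f_n(y_n-z_n)=2$; since $\|f_n\|_{X^*}=\|y_n\|_X=\|z_n\|_X=1$, this forces $f_n(y_n)=1$ and $f_n(z_n)=-1$. Put $Y_n:=\lambda y_n$ and $Z_n:=\lambda z_n$: then $Y_n,Z_n\to x$ weakly, $\|Y_n\|_X=\|Z_n\|_X=\lambda$, and — here is where compactness of $R$ enters — $RY_n\to Rx$ and $RZ_n\to Rx$ in the norm of $Y$, so $\Norm Y_n\Norm=\lambda+\|RY_n\|_Y\to\lambda+(1-\lambda)=1$ and likewise $\Norm Z_n\Norm\to1$. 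I would then set
\[
\tilde y_n:=\frac{Y_n}{\max\{1,\Norm Y_n\Norm\}},\qquad \tilde z_n:=\frac{Z_n}{\max\{1,\Norm Z_n\Norm\}},
\]
which lie in the unit ball of $(X,\Norm\cdot\Norm)$; since the denominators tend to $1$ while $Y_n,Z_n\to x$ weakly, both $\tilde y_n$ and $\tilde z_n$ converge weakly to $x$. Finally, using $f_n(Y_n)=\lambda$ and $f_n(Z_n)=-\lambda$, one gets $f_n(\tilde y_n-\tilde z_n)=\lambda/\max\{1,\Norm Y_n\Norm\}+\lambda/\max\{1,\Norm Z_n\Norm\}\to 2\lambda$, whence
\[
g_n(\tilde y_n-\tilde z_n)=\frac{f_n(\tilde y_n-\tilde z_n)}{\Norm f_n\Norm^*}\longrightarrow\frac{2\lambda}{c}\ge 2\lambda\ge\frac{2}{1+\|R\|},
\]
which is the desired conclusion.

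The step I expect to be the crux is the choice to feed $x/\|x\|_X$ (not $x$) into the hypothesis. Applying it to $x$ directly produces sequences of $X$-norm $1$, whose $\Norm\cdot\Norm$-norm tends to $1+\|Rx\|_Y$ — strictly above $1$ whenever $Rx\ne0$ — and such sequences cannot be pulled into $B_{(X,\Norm\cdot\Norm)}$ without shifting their weak limit away from $x$ (by weak lower semicontinuity of $\Norm\cdot\Norm$, every sequence in that ball converging weakly to $x$ must have norms tending to $1$). Rescaling as above cures this because $\|Y_n\|_X=\|x\|_X$ \emph{exactly}, so the extra $R$-term brings the total norm to $\|x\|_X+\|Rx\|_Y=1$ in the limit, and only a renormalization by factors tending to $1$ is then needed; compactness of $R$ is used precisely to guarantee $\|RY_n\|_Y\to\|Rx\|_Y$.
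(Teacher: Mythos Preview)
Your proof is correct and follows essentially the same route as the paper's: apply the hypothesis to $x/\|x\|_X$, scale the resulting sequences by $\|x\|_X$, use compactness of $R$ to push the $\Norm\cdot\Norm$-norms to~$1$, then renormalize, with $g_n$ being the $\Norm\cdot\Norm^*$-normalization of~$f_n$. Your two small deviations --- passing to a subsequence so that $\Norm f_n\Norm^*$ converges, and dividing by $\max\{1,\Norm Y_n\Norm\}$ rather than $\Norm Y_n\Norm$ --- are cosmetic refinements (the former upgrades a $\liminf$ bound to an honest limit, the latter guarantees membership in the closed ball at every stage rather than only on the sphere), not a different strategy.
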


\begin{proof}
We have that $$1=\Norm x\Norm =\|x\|_X + \|R(x)\|_Y \leq (1 +
\|R\|)\|x\|_X,$$ so $\|x\|_X\geq (1+\|R\|)^{-1}$. By hypothesis, we
may take two sequences $\{y_n\}$, $\{z_n\}$ in $X$ both converging
weakly to $x$ and a sequence $\{f_n\}$ in $S_{X^*}$
such that $f_n(y_n-z_n)=2\|x\|$, $\|y_n\|=\|z_n\|=\|x\|$ and $\|y_n-z_n\|=2\|x\|$ for every $n\in \N$. As $R$ is compact, we have that $\lim R y_n=\lim R z_n=Rx$, so
$$
\lim \|R y_n\| = \lim \|R z_n\|=\|Rx\| \quad \text{and} \quad \lim \|R(y_n - z_n)\|=0.
$$
Therefore,
$\lim \Norm y_n\Norm =\lim \Norm z_n\Norm =1$ and $\lim \Norm y_n - z_n\Norm \geq 2\|x\|$.  Also,  $\Norm f_n\Norm ^*\leq\Vert f_n\Vert^*=1$.
Finally, the sequences $\tilde y_n =\Norm y_n\Norm ^{-1}y_n$, $\tilde z_n = \Norm z_n\Norm ^{-1}z_n$ and $g_n=\Norm f_n\Norm ^{-1}f_n$ fulfill all of our requirements.
\end{proof}

We are now ready to give the proof of Proposition~\ref{prop-geom-main-c_0}.

\begin{proof}[Proof of Proposition~\ref{prop-geom-main-c_0}]
We start by using Lemma~\ref{lemma-c_0 in_ellinftyl} and (the proof of) Theorem~\ref{theor-main-application} to get an equivalent norm on $X$ such that $c_0\subset X \subset \ell_\infty$ isometrically, where $c_0$ is the canonical copy, and such that $\Lin(\NA(X))$ is weak-star modest. Next, for $0<\eps<2$, we consider an operator $R_\eps$ defined by \eqref{eq:geom-def-operator-R} from Remark~\ref{remark-resume} with $\|R_\eps\|<\frac{\eps}{2-\eps}$, and consider the norm
$$
p_\eps(x)=\|x\| + \|R_\eps(x)\|_{\ell_1} \qquad (x\in X),
$$
which is a Read norm. By Proposition~\ref{prop-geom-strictly-convex},
$(X,p_\eps)$ is strictly convex, so this gives~(a). To get (b), we
just have to apply Lemmas~\ref{lemma-geom-elementary}
and~\ref{Lemma-geom-transfer-2-eps}. Indeed, let $\{g_n\}$
be the sequence in the unit sphere of $(X, p_\eps)^*$ given by
Lemma~\ref{Lemma-geom-transfer-2-eps}. Consider $C=\sum_{i=1}^N
t_iS_i$, a convex combination of slices in the unit ball of
$(X,p_\eps)$, and $x_0\in C$. We write $x_0=\sum_{i=1}^N t_i x_i$
where $x_i\in S_i$ for every $i$. There is no loss of generality if we
assume that $p_\eps(x_i)\geq 1-\delta$ for every $i$, where $\delta$
is a positive number as small as we want. By using
Lemma~\ref{Lemma-geom-transfer-2-eps} again, we get that for every $i$
there are sequences $\{y_n^{i}\}$ and $\{z_n^{i}\}$ in the unit ball
of $(X, p_\eps)$ both weakly converging to $x_i$ and such that $\lim_n
g_n(y_n^{i}-z_n^{i})\geq (1-\delta)(2-\eps) $. Therefore, for large
enough $n$,  we have that $\sum_{i=1}^N t_i y_n^{i}$,
$\sum_{i=1}^N t_i z_n^{i}$ are elements in $C$ with
distance, at least, $(1-2\delta)(2-\eps)$. As $\delta$ is arbitrary,
we conclude that the diameter of $C$ is, at least, $2-\eps$. Finally,
every relatively weakly open subset of a unit ball contains a convex
combination of slices (a result due to Bourgain, see \cite[Lemme~5.3]{Bourgain-lemma}), and this gives the last part of~(b).

Item (c) is a consequence of (b) by using \cite[Proposition~I.1.11]{D-G-Z}.

If $X^*$ is separable, we may suppose that $X=c_0\oplus_\infty Z$ for some Banach space $Z$ and we use Proposition~\ref{prop-separablecontainingc0} to get that this norm makes $\Lin(\NA(X))$ modest. Now, for $0<\eps<2$, we follow the same process as before to construct the norm $p_\eps$. Again, Proposition~\ref{prop-geom-strictly-convex} gives (a) and Lemmas~\ref{lemma-geom-elementary} and~\ref{Lemma-geom-transfer-2-eps} give (b), and \cite[Proposition~I.1.11]{D-G-Z} gives (c) from~(b). Finally, (d) is a consequence of
Proposition~\ref{prop-geom-strictly-convex} since now $\Lin(\NA(X))$ is actually modest.
\end{proof}

In the separable case, we may get Read norms with better properties by using a convenient renorming from \cite{DebsGodSR} which was used in \cite{KadetsLopezMartin} for the original Read norm.

\begin{prop}\label{prop-geom-separable-smooth}
Let $X$ be a separable Banach space containing $c_0$. Then, for every $0<\eps<2$, there is an equivalent Read norm $q_\eps$ on $X$ such that:
\begin{enumerate}
    \item[(a)] $(X,q_\eps)$ is strictly convex;
    \item[(b)] $(X,q_\eps)^*$ is strictly convex, so $(X, q_\eps)$ is smooth;
    \item[(c)] $(X,q_\eps)^*$ is $(2-\eps)$-rough, equivalently, every
    slice of the unit ball of $(X,q_\eps)$ has diameter $\geq 2-\eps$.
\end{enumerate}
Moreover, if $X^*$ is separable, then
\begin{enumerate}
  \item[(d)] $(X,q_\eps)^{**}$ is strictly convex, so $(X,q_\eps)^*$ is smooth.
\end{enumerate}
\end{prop}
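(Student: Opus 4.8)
\textbf{Proof plan for Proposition~\ref{prop-geom-separable-smooth}.}
The plan is to combine the construction of Proposition~\ref{prop-geom-main-c_0} with the ball-sum renorming of Lemma~\ref{lemma-prelim-sum-of-balls}, following the strategy already employed in \cite{KadetsLopezMartin} for Read's original space. First I would invoke Sobczyk's theorem to write $X = c_0 \oplus_\infty Z$ for some closed subspace $Z$ (which is automatic once $X$ is separable and contains $c_0$), and apply Proposition~\ref{prop-separablecontainingc0} to obtain an equivalent norm for which $\Lin(\NA(X))$ is weak-star modest (and modest when $X^*$ is separable). Then, exactly as in the proof of Proposition~\ref{prop-geom-main-c_0}, I would choose for each $0<\eps<2$ a compact operator $R_\eps\colon X\longrightarrow \ell_1$ of the form \eqref{eq:geom-def-operator-R} with $\|R_\eps\| < \frac{\eps}{2-\eps}$ and set $p_\eps(x)=\|x\| + \|R_\eps(x)\|_{\ell_1}$, a Read norm which is strictly convex by Proposition~\ref{prop-geom-strictly-convex} and whose dual is $(2-\eps)$-rough by Lemmas~\ref{lemma-geom-elementary} and~\ref{Lemma-geom-transfer-2-eps}.

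The new ingredient is to smooth out the norm $p_\eps$ without destroying these properties. Here I would apply Lemma~\ref{lemma-prelim-sum-of-balls} with $Z$ replaced by a strictly convex reflexive space --- concretely, take an injective bounded operator $S\colon \ell_2\longrightarrow (X,p_\eps)$ with dense range (which exists since $(X,p_\eps)$ is separable), and let $q_\eps$ be the norm whose unit ball is $B_{(X,p_\eps)} + S(B_{\ell_2})$. By Lemma~\ref{lemma-prelim-sum-of-balls}(d), $\NA(X,q_\eps) = \NA(X,p_\eps)$, so $q_\eps$ is again a Read norm. By Lemma~\ref{lemma-prelim-sum-of-balls}(c), $q_\eps$ is strictly convex (both $(X,p_\eps)$ and $\ell_2$ are strictly convex), giving~(a). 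By Lemma~\ref{lemma-prelim-sum-of-balls}(a), the dual norm satisfies $q_\eps^*(x^*)=p_\eps^*(x^*) + \|S^*x^*\|_{\ell_2}$, i.e.\ $q_\eps^*$ is again of the sum-of-norms type with respect to the operator $S^*\colon (X,p_\eps)^*\longrightarrow \ell_2$; since $S$ has dense range, $S^*$ is injective and its range $S^*(X^*)$ lies in the strictly convex space $\ell_2$, so Lemma~\ref{lemma-prelim-sumofnorms}(d) applied to $S^*$ shows $(X,q_\eps)^*$ is strictly convex, which by duality yields that $(X,q_\eps)$ is smooth; this is~(b).

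For~(c), I would argue that the roughness of $p_\eps^*$ transfers to $q_\eps^*$: since $q_\eps^* = p_\eps^* + \|S^*\cdot\|_{\ell_2}$ adds the norm of a \emph{compact} perturbation (choosing $S$ compact, e.g.\ composing with a diagonal operator on $\ell_2$ so that $S^*$ is compact), Lemma~\ref{lemma-prelim-sumofnorms}(e) gives that $q_\eps^*$ is $\rho(1+\|S^*\|)^{-1}$-rough where $\rho = 2-\eps'$; by choosing $\|S^*\|$ small enough at the outset (replacing $\eps'$ by something slightly smaller than $\eps$), one arranges $q_\eps^*$ to be $(2-\eps)$-rough, which is equivalent to every slice of $B_{(X,q_\eps)}$ having diameter $\geq 2-\eps$ by \cite[Proposition~I.1.11]{D-G-Z}. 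Finally, for~(d), when $X^*$ is separable the norm from Proposition~\ref{prop-separablecontainingc0} makes $\Lin(\NA(X))$ modest, so Proposition~\ref{prop-geom-strictly-convex} lets us build $p_\eps$ with $(X,p_\eps)^{**}$ strictly convex; then by Lemma~\ref{lemma-prelim-sum-of-balls}(b), $B_{(X,q_\eps)^{**}} = B_{(X,p_\eps)^{**}} + J_X(S(B_{\ell_2}))$, and the same strict-convexity argument as in Lemma~\ref{lemma-prelim-sum-of-balls}(c) applied at the bidual level (using that $(X,p_\eps)^{**}$ and $\ell_2 = \ell_2^{**}$ are strictly convex) shows $(X,q_\eps)^{**}$ is strictly convex, hence $(X,q_\eps)^*$ smooth. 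The main obstacle is bookkeeping the several small parameters --- the norm of $R_\eps$ and the norm of $S$ --- so that the cumulative loss in the roughness constant from the two successive sum-type renormings still lands above $2-\eps$; this is handled by fixing $\eps'' < \eps' < \eps$ and distributing the slack, exactly the argument used in \cite{KadetsLopezMartin}.
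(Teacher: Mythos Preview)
Your proposal is correct and follows essentially the same route as the paper: start from the Read norm $p_{\eps'}$ of Proposition~\ref{prop-geom-main-c_0}, perform the ball-sum renorming of Lemma~\ref{lemma-prelim-sum-of-balls} with an operator $S\colon \ell_2\to X$ having dense range and small norm, and read off (a)--(d) from Lemmas~\ref{lemma-prelim-sumofnorms} and~\ref{lemma-prelim-sum-of-balls} exactly as you do. One minor simplification: you do not need $S$ (or $S^*$) to be compact for part~(c), since Lemma~\ref{lemma-prelim-sumofnorms}(e) transfers roughness under \emph{any} bounded perturbation, with loss factor $(1+\|S^*\|)^{-1}$; the paper simply picks $\eps'<\eps$ and $\rho>0$ with $(2-\eps')(1+\rho)^{-1}>2-\eps$ and takes $\|S\|\le\rho$, which is precisely your ``distributing the slack'' step.
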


\begin{proof}
We fix a dense subset $\{x_n\colon n\in \N\}$ of $B_X$, and for every $0<\rho<2$, we define the bounded linear operator $S_\rho\colon \ell_2\longrightarrow X$ by $S_\rho(\{a_n\})=\rho\sum_{n=1}^\infty \tfrac{a_n}{2^n}\,x_n$ for every $\{a_n\}\in \ell_2$, which satisfies that $\|S_\rho\|\leq \rho$. For $0<\eps<2$, we take $0<\eps'<\eps$ and $\rho>0$ such that $(2-\eps')(1+\rho)^{-1}>2-\eps$, we consider the norm $p_{\eps'}$ from Proposition~\ref{prop-geom-main-c_0}, and we define the equivalent norm $q_\eps$ on $X$ to be the one for which
$$
B_{(X,q_\eps)}= B_{(X,p_{\eps'})} + S_\rho(B_{\ell_2}).
$$
First, Lemma~\ref{lemma-prelim-sum-of-balls}(d) gives that
$\NA(X,q_\eps)=\NA(X,p_{\eps'})$ and so  $q_\eps$ is a Read norm.
It follows from Lemma~\ref{lemma-prelim-sum-of-balls}(a) that
$$
q_\eps(f)=p_{\eps'}(f) + \|S_\rho^*(f)\|_2
$$
for every $f\in X^*$. As $\ell_2$ is strictly convex and $T^*$ is
one-to-one, it follows from Lemma~\ref{lemma-prelim-sumofnorms}(d)
that $(X,q_\eps)^*$ is strictly convex, so $(X,q_\eps)$ is smooth,
giving (b). Lemma~\ref{lemma-prelim-sum-of-balls}(c) gives that
$(X,q_\eps)$ is strictly convex since both $(X,p_{\eps'})$ and
$\ell_2$ are; hence (a) holds. Finally, we know from
Proposition~\ref{prop-geom-main-c_0} that $(X,p_{\eps'})^*$ is
$(2-\eps')$-rough, and then Lemma~\ref{lemma-prelim-sumofnorms}(e)
gives that $(X,q_\eps)$ is $(2-\eps')(1+\rho)^{-1}$-rough, which gives
the first part of (c) due to the way in which we have chosen the
constants $\eps'$ and~$\rho$. Finally, the second part of (c) is
equivalent to the first one by \cite[Proposition~I.1.11]{D-G-Z}.

If moreover $X^*$ is separable, as $B_{(X,q_\eps)^{**}}= B_{(X,p_{\eps'})^{**}} + J_X(S_\rho(B_{\ell_2}))$ by Lemma~\ref{lemma-prelim-sum-of-balls}(b) and $(X,p_{\eps'})^{**}$ is strictly convex by Proposition~\ref{prop-geom-main-c_0}, the strict convexity of $(X,q_{\eps})^{**}$ follows from Lemma~\ref{lemma-prelim-sum-of-balls}(c).
\end{proof}

\section{Limits of our construction}\label{sect-limits}

The main open problem related to Read norms is the following one.

\begin{problem} \label{prob5.1}
Does every non-reflexive separable Banach space admit an equivalent norm such that the set of norm attaining functionals contains no linear subspaces of dimension two?
\end{problem}

Remark that for non-reflexive non-separable  Banach spaces the answer
to the above problem is negative. Indeed, every renorming $E$ of
$\ell_\infty(\Gamma)$ with uncountable $\Gamma$ contains an isometric
copy of $\ell_\infty(\N)$ \cite[Corollary on p.~207]{Partington}. This
copy is one-complemented, so $\NA(E) \supset \NA(\ell_\infty(\N))$,
which in turn contains an infinite-dimensional linear subspace,
viz., $\ell_1(\N)$.

Taking into account that $\ell_\infty(\Gamma)$ is a $C(K)$ space, it is natural to ask the following question.

\begin{problem} \label{probC(K)}
What is the description of those compacts $K$ for which the corresponding $C(K)$  admits an equivalent norm in which the set of norm attaining functionals contains no linear subspaces of dimension two?
\end{problem}

We do not know whether the answer to Problem~\ref{prob5.1} is positive, but we would like to discuss the reasons why our construction cannot provide such a positive answer.

Observe that the key in our construction is that $X^*\setminus \Lin(\NA(X))$ is big enough to contain a weak-star dense separable operator range. It is known that this is not possible for Banach spaces with the Radon-Nikod\'{y}m property or with an almost LUR norm, as the following result of Bandyopadhyay and Godefroy shows.

\begin{prop}[\mbox{\cite[Proposition~2.23]{Band-Godefroy}}]\label{prop-remarks-godefroy}
Let $X$ be a Banach space with the Radon-Nikod\'{y}m property or with an almost LUR norm. Then $\Lin(\NA(X))=X^*$.
\end{prop}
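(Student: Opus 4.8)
The plan is to reduce the statement to a soft Baire-category fact about $X^*$: \emph{if $\NA(X)$ contains a dense $G_\delta$ subset $G$ of $X^*$, then $\NA(X)+\NA(X)=X^*$, and in particular $\Lin(\NA(X))=X^*$}. To see this, fix $x^*\in X^*$ and observe that $g\mapsto x^*-g$ is a homeomorphism of $X^*$ onto itself, so $x^*-G$ is again a dense $G_\delta$. Since $X^*$ is complete, it is a Baire space, so the two comeager sets $G$ and $x^*-G$ have a common point $g_1$; writing $g_2:=x^*-g_1\in G$ we get $x^*=g_1+g_2$ with $g_1,g_2\in G\subseteq\NA(X)$. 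Thus it suffices, in each of the two cases, to exhibit a dense $G_\delta$ subset of $\NA(X)$.

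The set I would use is $\mathrm{SE}(B_X)\subseteq\NA(X)$, the set of functionals that \emph{strongly expose} $B_X$, i.e.\ those $f$ for which the slices $\{x\in B_X:f(x)>\sup_{B_X}f-\alpha\}$ have diameter tending to $0$ as $\alpha\to0^+$ (such an $f$ automatically attains its maximum on the closed set $B_X$, at the unique point lying in all these slices). In \emph{any} Banach space this set is a $G_\delta$: if $E_n$ denotes the set of $f$ admitting some slice of $B_X$ of diameter $<1/n$, then each $E_n$ is open, because a sufficiently small norm perturbation of $f$ carries a given thin slice of the bounded set $B_X$ into a slightly wider, still thin, one; and $\mathrm{SE}(B_X)=\bigcap_n E_n$. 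So the whole problem reduces to showing that $\mathrm{SE}(B_X)$ is dense in $X^*$.

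For a space with the Radon-Nikod\'{y}m property this density is exactly Bourgain's theorem applied to the closed bounded convex set $C=B_X$. For an ALUR norm, one uses that almost local uniform rotundity — tested against a constant sequence of norming functionals — says precisely that every norm-one functional attaining its norm at a point $x$ strongly exposes $B_X$ at $x$ (cf.\ \cite{Band-Godefroy} and the references therein); hence $\mathrm{SE}(B_X)\supseteq\NA(X)\setminus\{0\}$, which is dense by the Bishop-Phelps theorem. In either case $\mathrm{SE}(B_X)$ is a dense $G_\delta$ contained in $\NA(X)$, and the first paragraph gives $\Lin(\NA(X))=X^*$.

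The only genuinely nontrivial ingredients are these two density statements — Bourgain's theorem on the RNP side, and the correct reading of the ALUR definition on the other — so that is where the real work (or the invocation of the right external result) lies. Everything else is soft: the $G_\delta$-ness of $\mathrm{SE}(B_X)$, and the passage from ``a residual set of functionals attain their norm'' to ``every functional is a sum of two norm-attaining ones'', which is just the observation that a comeager subset $G$ of the Baire space $X^*$ satisfies $G+G=X^*$.
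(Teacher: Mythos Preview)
Your proposal is correct and follows essentially the same route the paper sketches: the paper does not prove this proposition in detail but remarks that it follows because $\NA(X)$ contains a dense $G_\delta$ in both cases (citing Bourgain for RNP and \cite{Band-Godefroy} for ALUR), and then isolates the Baire-category step as the separate Proposition~\ref{prop-NA-residua} (if $B\subset X^*$ is residual then $B-B=X^*$). Your write-up simply makes explicit what the dense $G_\delta$ is --- the strongly exposing functionals of $B_X$ --- and supplies the standard verification that this set is always a $G_\delta$, which is a reasonable way to flesh out the references the paper invokes.
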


Therefore, the main open question related to our construction is the following one.

\begin{problem}
Does every Banach space with weak-star separable dual and failing the Radon-Nikod\'{y}m property admit an equivalent norm for which the linear span of the set of norm attaining functionals is weak-star modest in the dual space?
\end{problem}

We don't even know the answer  for the space $L_1[0,1]$.

Let us comment that the proof of
Proposition~\ref{prop-remarks-godefroy} is a consequence of the fact
that $\NA(X)$ contains a dense $G_\delta$ subset of $X^*$ when $X$ has
the Radon-Nikod\'{y}m property (see Theorem~8 in \cite{Bou}, for
instance) or $X$ has an almost LUR norm (\cite{Band-Godefroy}), so
$\NA(X)$ is residual in this case. Actually, this latter
hypothesis is  sufficient to get that $\NA(X)-\NA(X)=X^*$ from the
Baire category theorem. We include the next result, which is contained
in the proof of \cite[Proposition~2.23]{Band-Godefroy}, for
completeness.

\begin{prop}[\mbox{\cite[included in the proof of Proposition~2.23]{Band-Godefroy}}]  \label{prop-NA-residua}
Let $X$ be a Banach space. If $B$ is a residual subset of $X$, then $B-B=X$ and so $\Lin(B)=X$. In particular, if $\NA(X)$ is residual in $X^*$ then $\Lin(\NA(X))=X^*$.
\end{prop}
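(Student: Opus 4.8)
The plan is a standard Baire category argument, exploiting that translations are homeomorphisms of a Banach space. First I would recall the relevant generalities: a residual (i.e.\ comeager) subset of $X$ is one whose complement is meager, that $X$ is a complete metric space and hence a Baire space, and that in a Baire space any finite intersection of residual sets is again residual and in particular dense and non-empty (the complement of the intersection is a finite union of meager sets, hence meager).

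Next, to prove $B - B = X$, I would fix an arbitrary $x \in X$. Since the map $y \mapsto y - x$ is a homeomorphism of $X$ onto itself, the translate $B + x = \{\,b + x : b \in B\,\}$ is residual in $X$ whenever $B$ is. Therefore $B \cap (B + x)$ is residual, hence non-empty; picking $z$ in this intersection and writing $z = b' + x$ with $b' \in B$ yields $x = z - b' \in B - B$. As $x$ was arbitrary, $B - B = X$. Since $B - B \subset \Lin(B)$, this immediately gives $\Lin(B) = X$, proving the first assertion.

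For the final claim, I would simply apply the statement just proved to the Banach space $X^*$ in place of $X$: $X^*$ is complete and hence a Baire space, so if $\NA(X)$ is residual in $X^*$ then $\NA(X) - \NA(X) = X^*$, and consequently $\Lin(\NA(X)) = X^*$.

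There is essentially no obstacle here; the proof is a few lines. The only point worth emphasizing is that it is genuinely residuality of $\NA(X)$ — not merely density, nor merely the residuality of some particular $G_\delta$ representative — that makes the two-set intersection argument go through in an arbitrary Banach space, and that completeness of $X$ (respectively $X^*$) enters exactly through the Baire property.
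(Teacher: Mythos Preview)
Your proof is correct and follows exactly the same idea as the paper's: for each $x$ the translate $x+B$ is again residual, so $(x+B)\cap B\neq\emptyset$, which immediately gives $x\in B-B$. The paper phrases the non-emptiness as ``otherwise the second-category set $x+B$ would be contained in the first-category set $X\setminus B$'', but this is the same argument.
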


\begin{proof}
We just have to note that for every $x\in X$, $\bigl(x+B\bigr)\cap B$ is not empty since, otherwise, the second category set $x+B$ would be contained in the first category set $X\setminus B$, which is impossible.
\end{proof}

Let us comment that the converse result to the above one is not true: for $X=L_1[0,1]$, $\NA(X)$ is of the Baire first category (so it cannot be residual), but $\Lin(\NA(X))=X^*$ (a description of $\NA(L_1[0,1])$ can be found in \cite[Lemma~2.6]{AcostaAizpuruAronGarcia}). Therefore, our construction is not applicable to $X=L_1[0,1]$ in its usual norm, but we do not know whether it could be the case in some renorming. Actually, it is known that every separable Banach space failing the Radon-Nikod\'{y}m property can be renormed in such a way that the set of norm attaining functionals is of the first Baire category (see the proof of \cite[Theorem~3.5.5]{Bourgin}) but, as the previous example shows, this does not imply that the linear span of the set of norm attaining functionals is also of the first Baire category.

Remark also that a similar argument to the proof of Proposition~\ref{prop-NA-residua} can give us the following curious result.

\begin{prop}  \label{prop-NA-residua-rational}
Let $X$ be an infinite-dimensional Banach space. If $B$ is a residual
subset of $X$ such that $tx\in B$ for every $x\in B$ and $t\in \Q$, then $B$  contains  an infinite sequence of linearly independent elements whose linear span over the field $\Q$  lies in $B$. In particular, if $\NA(X)$ is residual in $X^*$, then  $\NA(X)$ contains the $\Q$-linear span of an $\R$-linearly independent infinite sequence.
\end{prop}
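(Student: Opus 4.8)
The plan is to imitate the argument of Proposition~\ref{prop-NA-residua}, but to iterate it countably many times, building the desired sequence $(x_n)$ one vector at a time. Concretely, I would construct inductively an $\R$-linearly independent sequence $x_1,x_2,\dots$ of elements of $B$ such that, writing $V_n$ for the $\Q$-linear span of $x_1,\dots,x_n$, one has $V_n\subseteq B$ for every $n$; then $\bigcup_n V_n$ is the $\Q$-linear span of the whole sequence and it lies in $B$, which is what is claimed. The ``in particular'' will then follow at once by applying the general statement to the infinite-dimensional Banach space $X^*$ with $B=\NA(X)$, noting that $\NA(X)$ is stable under multiplication by any real, hence by any rational, scalar, and is residual in $X^*$ by hypothesis.

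For the inductive step I would isolate the following elementary observation: if $V\subseteq B$ is a $\Q$-linear subspace and $x\in\bigcap_{v\in V}(B-v)$, then the $\Q$-linear span of $V\cup\{x\}$ is still contained in $B$. Indeed a typical element is $v+qx$ with $v\in V$ and $q\in\Q$; the case $q=0$ is trivial, and for $q\ne0$ one uses that $v/q\in V$, so $x+v/q\in B$, and then $v+qx\in qB=B$ because $B$ is invariant under multiplication by $q$ and by $q^{-1}$. The remaining ingredients are the standard Baire-category facts: $X$ is a Baire space, residual sets are dense (in particular non-empty), a countable intersection of residual sets is residual, a translate $B-v$ of a residual set is residual, and the complement of a closed proper subspace of $X$ is residual because such a subspace is nowhere dense.

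With these in hand the induction runs smoothly. To start, $B\setminus\{0\}$ is residual, hence non-empty; pick $x_1$ there, so $V_1=\Q x_1\subseteq B$ by $\Q$-invariance of $B$. Given $x_1,\dots,x_{n-1}$ that are $\R$-linearly independent with $V_{n-1}\subseteq B$, the set $V_{n-1}$ is countable, so $\bigcap_{v\in V_{n-1}}(B-v)$ is residual; removing the finite-dimensional, hence nowhere dense, subspace $\Lin\{x_1,\dots,x_{n-1}\}$ keeps it residual, hence non-empty, and any $x_n$ in the resulting set lies outside the span of the previous vectors (so $\R$-linear independence is preserved) and, by the observation above applied with $V=V_{n-1}$, yields $V_n\subseteq B$. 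I do not expect a genuine obstacle here; the only points requiring a little care are the algebraic reduction showing that membership of all the combinations $v+qx$ in $B$ follows from $x\in\bigcap_{v\in V}(B-v)$ via $qB=B$, and making sure that the infinite-dimensionality of $X$ is invoked exactly where it is needed, namely to guarantee that $\Lin\{x_1,\dots,x_{n-1}\}$ is a proper, and therefore nowhere dense, subspace at every stage.
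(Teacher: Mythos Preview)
Your proposal is correct and follows essentially the same inductive Baire-category argument as the paper: at each stage intersect the countably many translates $B-v$ over $v$ in the (countable) $\Q$-span of the previously chosen vectors, then remove the nowhere-dense finite-dimensional real span to pick the next vector. If anything, you are slightly more careful than the paper in spelling out the algebraic step that passes from $x+v\in B$ for all $v\in V$ to $qx+v\in B$ for all $q\in\Q$ via $qB=B$.
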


\begin{proof}
Take   $0\neq x_1\in B$. Assume, inductively,  that linearly independent elements $x_1,\ldots ,x_n\in B$ have been constructed so that the set $\{x_1,\ldots ,x_n\}$ is linearly independent and the $\Q$-linear span of the set  $\{x_1,\ldots ,x_n\}$ lies in $B$. Consider
$$
E = \bigcap_{r_1,\ldots ,r_n \in \Q}\Bigl(B -\sum_{i=1}^n r_i x_i^*\Bigr).
$$
This  is a residual subset of $B$, so it contains an element $x_{n+1}\in B$
which is linearly independent from the set $\{x_1,\ldots
,x_n\}$. Indeed, if not then $Z:=\Lin(x_1,\ldots ,x_n)$ contains $E$
and is hence residual;
but $Z$ is a nowhere dense set, being a closed and proper subspace of
$X$, which is impossible by the Baire category theorem. According to
the definition of $E$, the condition $x_{n+1}\in E$ implies that
$x_{n+1}+\sum_{i=1}^n r_i x_n\in B$ for every $r_1,\ldots ,r_n\in
\Q$. Thus we get the required infinite sequence $\{x_n\}$ of linearly
independent elements in $B$.
\end{proof}

With the above result in mind, which  can be applied to $\NA(X)$ for
Banach spaces $X$ with the Radon-Nikod\'ym property or with an almost
LUR norm, it would be nice to know if there is some Banach space $X$
so that $\NA(X)$ is residual, but still $\NA(X)$ does not contain
two-dimensional subspaces. Remark also that as a consequence of
Proposition~\ref{prop-NA-residua} in combination with
Theorem~\ref{theor-main-construction}, for a general Banach space $X$,
if $\Lin(\NA(X))$ is weak-star modest then we again get  that $\NA(X)$
is not residual.

Also,  the following result of Fonf and Lindenstrauss
\cite[Theorem~4.3]{FonfLindenstrauss}  is  worth mentioning: for every non-reflexive Banach space $X$, $X^*\setminus \NA(X)$ is not a subset of a proper operator range, equivalently, $\Lin\bigl(X^*\setminus \NA(X)\bigr)$ is dense and barrelled (use \cite[Theorem~1.1]{Nygaard} for the equivalence).

On the other hand, for separable Banach spaces, if
$\Lin(\NA(X))$ is modest (or weak-star modest), then $\Lin(\NA(X))$
has to be of the first Baire category, as we may prove using a theorem
of Banach.

\begin{prop}\label{prop-remarks-Pettis}
Let $X$ be a separable Banach space. If $\Lin(\NA(X))$ is
of the second Baire category in $X^*$, then $\Lin(\NA(X))=X^*$.
\end{prop}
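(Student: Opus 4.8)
The plan is to invoke Banach's theorem on Borel additive subgroups (or, more precisely, the classical Pettis–type result that a second-category subspace with the Baire property in a separable Banach space is the whole space). Concretely, $\Lin(\NA(X))$ is a linear subspace of $X^*$, hence an additive subgroup; if it is of the second Baire category, the first step is to produce a point of $X^*$ at which it is "locally somewhere dense" in a usable sense. Since $X$ is separable, $X^*$ is separable in norm only when $X^*$ is separable, so I would be careful here: the natural ambient topology is the norm topology of $X^*$, and the statement should be read with Baire category taken with respect to the norm topology. Then $\Lin(\NA(X))$, being second category, is in particular non-meager, and the classical Piccard–Pettis argument shows that for a non-meager set $B$ with the Baire property, $B-B$ contains a neighbourhood of $0$; applied to the subspace $\Lin(\NA(X))$ this gives that $\Lin(\NA(X))$ contains a ball, and a subspace containing a ball is the whole space.

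The key steps, in order: first, reduce to showing $\Lin(\NA(X))$ is non-meager and has the Baire property; non-meagerness is exactly the hypothesis, and the Baire property will come from the fact that $\NA(X)$ is an analytic (indeed Borel) subset of $X^*$ when $X$ is separable — one can write $\NA(X) = \bigcup_{x \in S_X}\{x^* : x^*(x) = \|x^*\|\}$ and, using a countable dense subset of $S_X$ together with a standard approximation, express $\NA(X)$ as a countable intersection of countable unions of closed sets, so it is Borel; then $\Lin(\NA(X)) = \bigcup_{n} (\underbrace{\NA(X)-\NA(X)+\cdots}_{n})$ is a countable union of continuous images of Borel sets, hence analytic, hence has the Baire property by the classical theorem of Lusin–Sierpiński. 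Second, apply the Piccard–Pettis theorem: a non-meager set with the Baire property $A$ satisfies $A - A \in \mathcal{N}_0$ (a neighbourhood of $0$); since $\Lin(\NA(X)) - \Lin(\NA(X)) = \Lin(\NA(X))$, we conclude $\Lin(\NA(X))$ contains an open ball around $0$. Third, conclude: a linear subspace containing a neighbourhood of $0$ equals the whole space, so $\Lin(\NA(X)) = X^*$.

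The main obstacle I expect is the measurability/Baire-property bookkeeping: one must be sure that $\NA(X)$, and hence its finite sumsets, genuinely have the Baire property in the norm topology of $X^*$. The cleanest route is to argue $\NA(X)$ is Borel — for each $k$, the set $\{x^* : \exists x \in S_X,\ x^*(x) \geq \|x^*\| - 1/k\}$ is open (by a compactness-free argument using a countable dense subset of $S_X$ and continuity of evaluation), and $\NA(X)$ is the intersection over $k$ of these open sets together with a closedness argument to pin down attainment — or, if one prefers to avoid this, simply invoke that in a separable Banach space every analytic set has the Baire property and note that $\NA(X)$ is plainly analytic as the projection onto $X^*$ of the closed set $\{(x,x^*) \in S_X \times X^* : x^*(x) = \|x^*\|\}$. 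Either way the separability of $X$ is used to get second countability of $S_X$ in a suitable (e.g.\ weak) sense and to ensure the ambient space is amenable to these descriptive set-theoretic tools. Once the Baire property is in hand, the rest is the standard Piccard–Pettis mechanism and is routine.
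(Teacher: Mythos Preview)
Your strategy is the paper's: show $\Lin(\NA(X))$ has the Baire property via analyticity, then invoke Banach's theorem (your Piccard--Pettis formulation is an equivalent way to finish). The one point that needs more care is exactly the obstacle you flag, and neither of your proposed resolutions quite works as stated: your ``Borel'' description is not right (the sets $\{x^*:\exists x\in S_X,\ x^*(x)\ge\|x^*\|-1/k\}$ are all of $X^*\setminus\{0\}$), and your fallback ``in a separable Banach space every analytic set has the Baire property'' is aimed at the wrong space, since $X^*$ need not be norm-separable. The paper resolves this by first proving analyticity of $\NA(X)$, and then of $\Lin(\NA(X))$, in the \emph{weak-star} topology (where $(B_{X^*},w^*)$ is Polish because $X$ is separable; this is essentially your projection argument), then using that in a Hausdorff space analytic sets are $F$-Souslin: since weak-star closed sets are norm closed, the Souslin scheme transfers verbatim to the norm topology, and $F$-Souslin sets have the Baire property in any Hausdorff space. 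That weak-star--to--norm transfer via the Souslin representation is the bridge your sketch is missing.
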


\begin{proof}
The argument relies on notions and results from descriptive set theory
that we'll recall in the course of the proof. A Polish space is a
completely metrisable separable topological space, and an analytic set
is a subset of a topological space which is a continuous image of some
Polish space. Since $X$ is separable, $\NA(X)$ is an analytic subset
of $X^*$ equipped with the weak-star topology; see
\cite[p.~221]{Kaufman}.
We shall argue that $\Lin(\NA(X))$ is analytic as well.

 For $n\in\N$ define  $f_n\colon \NA(X)^n \times \R^n\to X^*$ by
 $$
 f_n(x_1^*,\dots,x_n^*, t_1,\dots,t_n) = \sum_{k=1}^n t_k x_k^*;
 $$
 then
 $$
 \Lin (\NA(X)) = \bigcup_{n\in\N} f_n( \NA(X)^n \times \R^n ).
 $$
 Now the class of analytic sets is closed under taking finite (even
 countable) products, continuous images, and countable unions;
 therefore $\Lin(\NA(X))$ is indeed analytic for the weak-star
 topology.

In a Hausdorff topological space, analytic sets are known to be $F$-Souslin
\cite[Theorem~6.6.8]{Boga2},
that is, they can be represented as
$$
\bigcup_\sigma \bigcap_{n=1}^\infty  F_{\sigma_1,\dots,\sigma_n}
$$
for closed sets $F_{\sigma_1,\dots,\sigma_n} $ where the union is
taken over all sequences   $\sigma = (\sigma_1,\sigma_2, \dots)$ of
positive integers. Hence $\Lin(\NA(X))$ is $F$-Souslin for the
weak-star topology and therefore also for the norm topology.

The next piece of information that we need concerns the Baire
property. A subset of a topological space has the Baire property if it
differs from an open set by a set of the first category; that is, if
it can be written in the form $G \mathbin{\Delta} M$ with $G$ open and $M$ of
the first category where $\Delta$ denotes the symmetric difference. In a
Hausdorff space, every $F$-Souslin set has
the Baire property; see \cite[Corollary~2.9.4]{RJ}. Consequently,
$\Lin(\NA(X))$ has the Baire property for the norm topology.

Finally we apply a theorem due to Banach
(\cite[Th\'eor\`eme~2, Chapitre~1]{Banach}
or \cite[p.~92]{Kelley-Namioka}) which assures that,
in a Banach space,
a linear subspace of the second Baire category which satisfies the
Baire property is the whole space.
\end{proof}

We are grateful to W.~Moors for indicating the above argument to us;
in a preliminary version of this paper we had to make the far stronger
assumption that $X^*$ is separable.

We do not know whether separability can be dropped from
Proposition~\ref{prop-remarks-Pettis}.

As a consequence of Proposition~\ref{prop-remarks-Pettis}, if $X$ is
separable and $\Lin(\NA(X))$ is weak-star modest, then $\Lin(\NA(X))$
has to be of the first Baire category. We do not know whether the
converse is also true, but there is a partial answer.

\begin{prop}
Let $X$ be a Banach space such that $X^*$ is separable. If
$\Lin(\NA(X))$ is not barrelled, then $\Lin(\NA(X))$ is modest (and
so, $X$ admits an equivalent Read norm).
\end{prop}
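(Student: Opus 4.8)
The plan is to show that $Y:=\Lin(\NA(X))$ is modest; the parenthetical assertion then follows immediately from Theorem~\ref{theor-main-construction}, since a norm-dense operator range is weak-star dense, so every modest subspace is weak-star modest. We may assume $\dim X=\infty$ (otherwise $Y=X^*$ is barrelled and there is nothing to prove). By the Bishop--Phelps theorem, $\NA(X)$ and hence $Y$ is norm-dense in $X^*$; as $X^*$ is separable, so is every subspace of $X^*$, so it is enough to produce a norm-dense operator range $Z\subset X^*$ with $Z\cap Y=\{0\}$.

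The hypothesis enters through the characterisation of barrelledness recalled after Definition~\ref{def-operator range+modest}: since $Y$ is dense and \emph{not} barrelled, \cite[Theorem~15.2.1]{Wilansky} provides a non-closed dense operator range $W\subset X^*$ with $Y\subseteq W$. It therefore suffices to prove that $W$ is modest, because a dense operator range disjoint from $W$ is a fortiori disjoint from the smaller subspace $Y$. So the whole matter reduces to a statement about operator ranges alone: \emph{a non-closed dense operator range in a separable Banach space is modest.}

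To prove this, write $W=T_0(E_0)$ with $T_0$ bounded and injective. Such a $W$ has infinite codimension (a two-norms/closed-graph argument shows that otherwise the range norm transported from $E_0$ would be equivalent to the ambient one, forcing $W$ to be closed), and it is meagre (being the countable union of the nowhere-dense sets $\overline{n\,T_0(B_{E_0})}$). A standard gliding-hump argument of the kind used in operator-range theory (cf.\ \cite{Cross,Fill-Will}) then yields an infinite-dimensional closed subspace $G\subset X^*$ with $W\cap G=\{0\}$. Then $W':=W\oplus G$ is again a non-closed dense operator range (it is a direct sum of $W$, with its complete range norm, and the closed space $G$; it contains $W$; and it is proper, as otherwise the quotient map would make $W$ isomorphic to the Banach space $X^*/G$). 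Representing $W'=T_0(E_0)$ with $E_0=E\oplus G$, $T_0$ bounded and injective, $T_0|_G$ the inclusion and $T_0|_E$ a homeomorphism of $E$ onto $(W,\|\cdot\|_W)$, we get $T_0^{-1}(Y)\subseteq T_0^{-1}(W)=E\oplus\{0\}$, a closed complemented subspace of $E_0$. It now suffices to find a dense operator range $\widetilde Z\subset E_0$ with $\widetilde Z\cap(E\oplus\{0\})=\{0\}$, for then $Z:=T_0(\widetilde Z)$ is dense in $X^*$ (as $\widetilde Z$ is dense in $E_0$ and $W'$ dense in $X^*$) and satisfies $Z\cap Y=\{0\}$: if $T_0(\widetilde z)\in Y\subseteq T_0(E\oplus\{0\})$ then $\widetilde z\in T_0^{-1}(W)\cap\widetilde Z=\{0\}$. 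Such a $\widetilde Z$ can be produced as the graph $\{(f c,\iota c):c\in\ell_1\}$ of a suitable bounded operator $f\colon\ell_1\to E$, where $\iota\colon\ell_1\to G$ is bounded, injective, with dense range: this graph is an operator range (the range of the injective bounded map $c\mapsto(fc,\iota c)$), it misses $E\oplus\{0\}$ by injectivity of $\iota$, and one chooses $f$ so that the graph is dense in $E\oplus G$, which amounts to the column space $\{(\langle\xi,fe_n\rangle)_n:\xi\in E^*\}$ meeting $\iota^*(G^*)$ only in $0$ inside $\ell_\infty$ --- the exact transversality achieved in Proposition~\ref{two operator ranges} and its proof.

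The main obstacle is precisely the operator-range construction just sketched: the sought $Z$ must be dense in $X^*$ yet avoid the likewise dense subspace $W\supseteq Y$, and it is in reconciling these competing demands that the theory of dense operator ranges (à la Fillmore--Williams and Cross, cf.\ \cite{CrossOstrovskiiShevchik,Cross,Fill-Will}) does the real work; the remaining ingredients --- Bishop--Phelps density, the Wilansky reduction, and the final invocation of Theorem~\ref{theor-main-construction} --- are routine.
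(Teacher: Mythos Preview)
The paper's proof is a two-line appeal to the literature: once Wilansky's theorem places $\Lin(\NA(X))$ inside a proper dense operator range $W\subset X^*$, Shevchik's theorem \cite[Theorem~1]{Shevchik} directly supplies a dense operator range $Z\subset X^*$ with $Z\cap W=\{0\}$, which is modesty. You take the same first step but then, instead of citing Shevchik, attempt to reprove the needed special case (``a non-closed dense operator range in a separable Banach space is modest'') by hand: find a closed infinite-dimensional $G$ with $G\cap W=\{0\}$, pull back to the domain $E_0=E\oplus G$, and build a dense operator range in $E_0$ missing the closed complemented summand $E\oplus\{0\}$.

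This strategy is sound, and you rightly flag the last step as the crux, but the execution there has a gap. Density of your graph $\widetilde Z=\{(fc,\iota c):c\in\ell_1\}$ in $E\oplus G$ amounts to triviality of its annihilator, i.e.\ to: $f^*\xi=-\iota^*\eta\Rightarrow\xi=0,\ \eta=0$. This needs \emph{both} that $f$ have dense range \emph{and} that $f^*(E^*)\cap\iota^*(G^*)=\{0\}$ in $\ell_\infty$. You assert this is ``the exact transversality achieved in Proposition~\ref{two operator ranges}'', but that proposition produces two \emph{freely chosen} dense operator ranges in a \emph{separable} space, whereas here $\iota$ is already fixed, the ambient $\ell_\infty$ is nonseparable, and neither adjoint range need be dense --- so the citation does not deliver the map~$f$. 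The gap is genuine but easily filled with the paper's own tools: the construction in the proof of Proposition~\ref{prop_modest_sum}(b), applied verbatim to the direct sum $E_0=E\oplus G$ with $F_2=\{0\}$ (trivially modest in the infinite-dimensional separable space~$G$), yields exactly the dense operator range in $E_0$ missing $E\oplus\{0\}$ that you want; the duality in that proposition's statement plays no role in its proof.
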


\begin{proof}
By \cite[Theorem~15.2.1]{Wilansky}, it follows that $\Lin(\NA(X))$ is
contained in a (dense) proper operator range. Now, \cite[Theorem~1]{Shevchik}
shows that there is a dense operator range $Y$ in $X^*$ such that
$Y\cap \Lin(\NA(X))=\{0\}$, that is, $\Lin(\NA(X))$ is modest.
\end{proof}

We have to mention that this result does not produce new examples of
spaces which admit Read norms, as the following result by Fonf shows:
if $\Lin(\NA(X))$ is not barrelled, then $X$ contains a copy of $ c_0$ (see \cite[Theorem~2.6]{Nygaard} for a version of Fonf's result using this language).

Let us note that the task to find a Banach space $X$ with $X^*$
separable such that $\Lin(\NA(X))$ is weak-star modest and $X$ does
not contain $c_0$, requires to find a Banach space $X$ such that
$\Lin(\NA(X))$ is of the first Baire category and barrelled.

Finally, it would be interesting to find examples of Banach spaces $X$
for which $\Lin(\NA(X))$ is modest (or weak-star modest) in their
usual norm, as it happens with $c_0$. Another example of this kind is
given in the papers \cite{Aco-Edinburgh,Aco-contemporary} by Acosta:
let $w\in \ell_2\setminus \ell_1$ with $0<w_n<1$ for all $n$ and
consider the space $Z$ of sequences $z$ of scalars for which
$$
\|z\|:=\|(1-w)z\|_\infty \,+\,\|wz\|_{\ell_1} <\infty
$$
endowed with this function as norm. Then, the sequence $\{e_n\}$ of
unit vectors is a $1$-unconditional basic sequence of $Z^*$ whose
closed linear span $X(w)$  is an isometric predual of
$Z$ for which $\{e_n\}$ is a $1$-unconditional basis whose
biorthogonal basis $\{e_n^*\}$ is again the canonical unit vector
basis \cite[Lemma~2.1]{Aco-Edinburgh}.
Then, $\Lin(\NA(X(w)))$ is modest in $X^*$. Indeed, it is shown
in \cite[Lemma~2.2]{Aco-contemporary} that if $x^*\in X(w)^*$ belongs
to $\NA(X(w)^*)$, then $w\chi_{\supp(x^*)}\in \ell_1$; if we consider
the bounded linear operator $T\colon \ell_1 \longrightarrow X(w)^*$ given by
$T(e_n)=e_n^*$ for every $n\in \N$,  it follows, as $w\notin \ell_1$,
that $T(Y)\cap \Lin(\NA(X))=\{0\}$ where $Y$ is the operator range of
$\ell_1$ given in Lemma~\ref{Lem-op-im-inell1}. Let us observe that
$X(w)$ contains a copy of $c_0$ (since the basis is unconditional and
shrinking and the space is not reflexive), so we already know from
Section~\ref{sect:containing_c_0} that it admits an equivalent Read
norm.

\end{document}